\DeclareMathOperator{\supp}{supp}
\newcommand{\meas}{m}
\definecolor{myblue}{rgb}{0,0,0.6}     %
\newcommand{\hookdoubleheadrightarrow}{%
  \hookrightarrow\mathrel{\mspace{-15mu}}\rightarrow
}
\begin{document}
\newcommand{\rf}[1]{(\ref{#1})}
\newcommand{\mmbox}[1]{\fbox{\ensuremath{\displaystyle{ #1 }}}}	
\newcommand{\hs}[1]{\hspace{#1mm}}
\newcommand{\vs}[1]{\vspace{#1mm}}
\newcommand{\ri}{{\mathrm{i}}}
\newcommand{\re}{{\mathrm{e}}}
\newcommand{\rd}{\mathrm{d}}
\newcommand{\R}{\mathbb{R}}
\newcommand{\Q}{\mathbb{Q}}
\newcommand{\N}{\mathbb{N}}
\newcommand{\Z}{\mathbb{Z}}
\newcommand{\C}{\mathbb{C}}
\newcommand{\K}{{\mathbb{K}}}
\newcommand{\cA}{\mathcal{A}}
\newcommand{\cB}{\mathcal{B}}
\newcommand{\cC}{\mathcal{C}}
\newcommand{\cS}{\mathcal{S}}
\newcommand{\cD}{\mathcal{D}}
\newcommand{\cH}{\mathcal{H}}
\newcommand{\cI}{\mathcal{I}}
\newcommand{\cItilde}{\tilde{\mathcal{I}}}
\newcommand{\cIhat}{\hat{\mathcal{I}}}
\newcommand{\cIcheck}{\check{\mathcal{I}}}
\newcommand{\cIstar}{{\mathcal{I}^*}}
\newcommand{\cJ}{\mathcal{J}}
\newcommand{\cM}{\mathcal{M}}
\newcommand{\cP}{\mathcal{P}}
\newcommand{\cV}{{\mathcal V}}
\newcommand{\cW}{{\mathcal W}}
\newcommand{\scrD}{\mathscr{D}}
\newcommand{\scrS}{\mathscr{S}}
\newcommand{\scrJ}{\mathscr{J}}
\newcommand{\sD}{\mathsf{D}}
\newcommand{\sN}{\mathsf{N}}
\newcommand{\sS}{\mathsf{S}}
\newcommand{\bs}[1]{\mathbf{#1}}
\newcommand{\bb}{\mathbf{b}}
\newcommand{\bd}{\mathbf{d}}
\newcommand{\bn}{\mathbf{n}}
\newcommand{\bp}{\mathbf{p}}
\newcommand{\bP}{\mathbf{P}}
\newcommand{\bv}{\mathbf{v}}
\newcommand{\bx}{\mathbf{x}}
\newcommand{\by}{\mathbf{y}}
\newcommand{\bz}{{\mathbf{z}}}
\newcommand{\bxi}{\boldsymbol{\xi}}
\newcommand{\boldeta}{\boldsymbol{\eta}}	
\newcommand{\ts}{\tilde{s}}
\newcommand{\tGamma}{{\tilde{\Gamma}}}
\newcommand{\done}[2]{\dfrac{d {#1}}{d {#2}}}
\newcommand{\donet}[2]{\frac{d {#1}}{d {#2}}}
\newcommand{\pdone}[2]{\dfrac{\partial {#1}}{\partial {#2}}}
\newcommand{\pdonet}[2]{\frac{\partial {#1}}{\partial {#2}}}
\newcommand{\pdonetext}[2]{\partial {#1}/\partial {#2}}
\newcommand{\pdtwo}[2]{\dfrac{\partial^2 {#1}}{\partial {#2}^2}}
\newcommand{\pdtwot}[2]{\frac{\partial^2 {#1}}{\partial {#2}^2}}
\newcommand{\pdtwomix}[3]{\dfrac{\partial^2 {#1}}{\partial {#2}\partial {#3}}}
\newcommand{\pdtwomixt}[3]{\frac{\partial^2 {#1}}{\partial {#2}\partial {#3}}}
\newcommand{\bnabla}{\boldsymbol{\nabla}}
\newcommand{\dive}{\boldsymbol{\nabla}\cdot}
\newcommand{\curl}{\boldsymbol{\nabla}\times}
\newcommand{\Phixy}{\Phi(\bx,\by)}
\newcommand{\PhiOxy}{\Phi_0(\bx,\by)}
\newcommand{\dxPhixy}{\pdone{\Phi}{n(\bx)}(\bx,\by)}
\newcommand{\dyPhixy}{\pdone{\Phi}{n(\by)}(\bx,\by)}
\newcommand{\dxPhiOxy}{\pdone{\Phi_0}{n(\bx)}(\bx,\by)}
\newcommand{\dyPhiOxy}{\pdone{\Phi_0}{n(\by)}(\bx,\by)}
\newcommand{\eps}{\varepsilon}
\newcommand{\real}[1]{{\rm Re}\left[#1\right]} 
\newcommand{\im}[1]{{\rm Im}\left[#1\right]}
\newcommand{\ol}[1]{\overline{#1}}
\newcommand{\ord}[1]{\mathcal{O}\left(#1\right)}
\newcommand{\oord}[1]{o\left(#1\right)}
\newcommand{\Ord}[1]{\Theta\left(#1\right)}
\newcommand{\hsnorm}[1]{||#1||_{H^{s}(\bs{R})}}
\newcommand{\hnorm}[1]{||#1||_{\tilde{H}^{-1/2}((0,1))}}
\newcommand{\norm}[2]{\left\|#1\right\|_{#2}}
\newcommand{\normt}[2]{\|#1\|_{#2}}
\newcommand{\on}[1]{\Vert{#1} \Vert_{1}}
\newcommand{\tn}[1]{\Vert{#1} \Vert_{2}}
\newcommand{\xt}{\mathbf{x},t}
\newcommand{\PhiF}{\Phi_{\rm freq}}
\newcommand{\cone}{{c_{j}^\pm}}
\newcommand{\ctwo}{{c_{2,j}^\pm}}
\newcommand{\cthree}{{c_{3,j}^\pm}}
\newtheorem{thm}{Theorem}[section]
\newtheorem{lem}[thm]{Lemma}
\newtheorem{defn}[thm]{Definition}
\newtheorem{prop}[thm]{Proposition}
\newtheorem{cor}[thm]{Corollary}
\newtheorem{rem}[thm]{Remark}
\newtheorem{conj}[thm]{Conjecture}
\newtheorem{ass}[thm]{Assumption}
\newtheorem{example}[thm]{Example} 
\newcommand{\tH}{\widetilde{H}}
\newcommand{\Hze}{H_{\rm ze}} 	
\newcommand{\uze}{u_{\rm ze}}		
\newcommand{\dimH}{{\rm dim_H}}
\newcommand{\dimB}{{\rm dim_B}}
\newcommand{\IntClosOm}{\mathrm{int}(\overline{\Omega})}
\newcommand{\IntClosOmOne}{\mathrm{int}(\overline{\Omega_1})}
\newcommand{\IntClosOmTwo}{\mathrm{int}(\overline{\Omega_2})}
\newcommand{\Ccomp}{C^{\rm comp}}
\newcommand{\tCcomp}{\tilde{C}^{\rm comp}}
\newcommand{\uC}{\underline{C}}
\newcommand{\utC}{\underline{\tilde{C}}}
\newcommand{\oC}{\overline{C}}
\newcommand{\otC}{\overline{\tilde{C}}}
\newcommand{\capcomp}{{\rm cap}^{\rm comp}}
\newcommand{\Capcomp}{{\rm Cap}^{\rm comp}}
\newcommand{\tcapcomp}{\widetilde{{\rm cap}}^{\rm comp}}
\newcommand{\tCapcomp}{\widetilde{{\rm Cap}}^{\rm comp}}
\newcommand{\hcapcomp}{\widehat{{\rm cap}}^{\rm comp}}
\newcommand{\hCapcomp}{\widehat{{\rm Cap}}^{\rm comp}}
\newcommand{\tcap}{\widetilde{{\rm cap}}}
\newcommand{\tCap}{\widetilde{{\rm Cap}}}
\newcommand{\ccap}{{\rm cap}}
\newcommand{\ucap}{\underline{\rm cap}}
\newcommand{\uCap}{\underline{\rm Cap}}
\newcommand{\cCap}{{\rm Cap}}
\newcommand{\ocap}{\overline{\rm cap}}
\newcommand{\oCap}{\overline{\rm Cap}}
\DeclareRobustCommand
{\mathringbig}[1]{\accentset{\smash{\raisebox{-0.1ex}{$\scriptstyle\circ$}}}{#1}\rule{0pt}{2.3ex}}
\newcommand{\cirH}{\mathringbig{H}}
\newcommand{\cirHs}{\mathringbig{H}{}^s}
\newcommand{\cirHt}{\mathringbig{H}{}^t}
\newcommand{\cirHm}{\mathringbig{H}{}^m}
\newcommand{\cirHzero}{\mathringbig{H}{}^0}
\newcommand{\deO}{{\partial\Omega}}
\newcommand{\OO}{{(\Omega)}}
\newcommand{\Rn}{{(\R^n)}}
\newcommand{\Id}{{\mathrm{Id}}}
\newcommand{\gap}{\mathrm{Gap}}
\newcommand{\ggap}{\mathrm{gap}}
\newcommand{\isom}{{\xrightarrow{\sim}}}
\newcommand{\half}{{1/2}}
\newcommand{\mhalf}{{-1/2}}
\newcommand{\baro}{{\overline{\Omega}}} 
\newcommand{\inter}{{\mathrm{int}}}

\newcommand{\Hsp}{H^{s,p}}
\newcommand{\Htq}{H^{t,q}}
\newcommand{\tHsp}{{{\widetilde H}^{s,p}}}
\newcommand{\SP}{\ensuremath{(s,p)}}
\newcommand{\Xsp}{X^{s,p}}

\newcommand{\dd}{{d}}\newcommand{\pp}{{p_*}}

\newcommand{\Rnn}{\R^{n_1+n_2}}
\newcommand{\Tr}{{\mathrm{Tr}}}
\title[Sobolev spaces on non-Lipschitz sets]{Sobolev spaces on non-Lipschitz subsets of $\R^n$ with application to boundary integral \\ equations on fractal screens}

\author{S.\ N.\ Chandler-Wilde}
\address{Department of Mathematics and Statistics\\ University of Reading \\
Whiteknights PO Box 220 \\
Reading RG6 6AX\\
United Kingdom}
\email{s.n.chandler-wilde@reading.ac.uk}
\author{D.\ P.\ Hewett}
\address{Department of Mathematics\\ University College London \\
Gower Street \\
London WC1E 6BT\\
United Kingdom}
\email{d.hewett@ucl.ac.uk}
\author{A.\ Moiola}
\address{Department of Mathematics and Statistics\\ University of Reading \\
Whiteknights PO Box 220 \\
Reading RG6 6AX\\
United Kingdom}
\email{a.moiola@reading.ac.uk}

\begin{abstract} We study properties of the classical
fractional
Sobolev spaces
on non-Lipschitz subsets of $\R^n$.
We investigate the extent to which the properties of these spaces, and the relations between them, that hold in the well-studied case of a Lipschitz open set, generalise to non-Lipschitz cases.
Our motivation is to develop the functional analytic framework in which to formulate and analyse integral equations on non-Lipschitz sets. In particular we consider an application to boundary integral equations for wave scattering by planar screens that are non-Lipschitz, including cases where the screen is fractal or has fractal boundary.
\end{abstract}

\maketitle
\section{Introduction}\label{sec:intro}
In this paper we present a self-contained study of %
Hilbert--Sobolev spaces defined on arbitrary open and closed sets of $\R^n$, aimed at applied and numerical analysts interested in linear elliptic problems on rough domains, in particular in boundary integral equation (BIE) reformulations.
Our focus is on the Sobolev spaces $H^s(\Omega)$, $H^s_0(\Omega)$, $\tH^s(\Omega)$, $\cirHs(\Omega)$, and $H^s_F$, all described below, where $\Omega$ (respectively $F$) is an arbitrary open (respectively closed) subset of $\R^n$. Our goal is to investigate %
properties of these spaces (in particular, to provide natural unitary realisations for their dual spaces), and to clarify the nature of the relationships between them.

Our motivation for writing this paper is recent and current work by two of the authors \cite{CoercScreen,CoercScreen2,Ch:13,ScreenPaper} on problems of acoustic scattering by planar screens with rough (e.g.\ fractal) boundaries.
The practical importance of such scattering problems has been highlighted by the recent emergence of ``fractal antennas'' in electrical engineering applications, which have attracted attention due to their miniaturisation and multi-band properties; see the reviews \cite{GiRS:02,WeGa:03} and \cite[\S18.4]{Fal}. The acoustic case considered in \cite{CoercScreen,CoercScreen2,Ch:13,ScreenPaper} and the results of the current paper may be viewed as first steps towards developing a mathematical analysis of problems for such structures.

In the course of our investigations of BIEs on more general sets it appeared to us that the literature on the relevant classical Sobolev spaces, while undeniably vast, is not as complete %
or as clear as desirable in the case when the domain of the functions is an arbitrary open or closed subset of Euclidean space, as opposed to the very well-studied case of a Lipschitz open set.
By ``classical Sobolev spaces" we mean the simplest of Sobolev spaces, Hilbert spaces based on the $L^2$ norm, which are sufficient for a very large part of the study of linear elliptic BVPs and BIEs, and are for this reason the focus of attention for example in the classic monographs \cite{LiMaI} and \cite{ChPi} and in the more recent book by McLean \cite{McLean} that has become the standard reference for the theory of BIE formulations of BVPs for strongly elliptic systems.
However, even in this restricted setting there are many different ways to define Sobolev spaces on subsets of $\R^n$ (via e.g.\ weak derivatives, Fourier transforms and Bessel potentials, completions of spaces of smooth functions, duality, interpolation, traces, quotients, restriction of functions defined on a larger subset,~\ldots).
On Lipschitz open sets (defined e.g.\ as in %
\cite[1.2.1.1]{Gri}), many of these different definitions lead to the same Sobolev spaces and to equivalent norms. But, as we shall see, the situation is more complicated for spaces defined on more general subsets of $\R^n$.

Of course there already exists a substantial literature relating to function spaces on rough subsets of $\R^n$ (see e.g.~\cite{JoWa84,Triebel97FracSpec,Triebel83ThFS,Maz'ya,AdHe,MaPo97,Ca:00,St:03}).
However, many of the results presented here, despite being relatively %
elementary, do appear to be new and of interest and relevance for applications.
That we are able to achieve some novelty may be  due in part to the fact that we restrict our attention to the Hilbert--Sobolev framework, which means that many of the results we are interested in can be proved using Hilbert space techniques and geometrical properties of the domains, without the need for more general and intricate theories such as those of Besov and Triebel--Lizorkin spaces and atomic decompositions \cite{Triebel83ThFS,Maz'ya,AdHe} which are usually employed to describe function spaces on rough sets.
This paper is by no means an exhaustive study, but we hope that the results we provide, along with the open questions that we pose, will stimulate further research in this area.

Many of our results involve the question of whether or not a given subset of Euclidean space can support a Sobolev distribution of a given regularity (the question of ``$s$-nullity'', see \S\ref{subsec:Polarity} below). A number of results pertaining to this question have been derived recently in \cite{HewMoi:15} using standard results from potential theory in \cite{AdHe,Maz'ya}, and those we shall make use of are summarised in \S\ref{subsec:Polarity}. We will also make reference to a number of the concrete examples and counterexamples provided in \cite{HewMoi:15}, in order to demonstrate the sharpness (or otherwise) of our theoretical results.
Since our motivation for this work relates to the question of determining the correct function space setting in which to analyse integral equations posed on rough domains, we include towards the end of the paper an application to BIEs on fractal screens; further applications in this direction can be found in \cite{CoercScreen,Ch:13,ScreenPaper}.

We point out that one standard way of defining Sobolev spaces not considered in detail in this paper is interpolation (e.g.\ defining spaces of fractional order by interpolation between spaces of integer order, as for the famous Lions--Magenes space $H^{1/2}_{00}(\Omega)$). In our separate paper \cite{InterpolationCWHM} we prove that while the spaces $H^s(\Omega)$ and $\tH^s(\Omega)$ form interpolation scales for Lipschitz $\Omega$, if this regularity assumption is dropped the interpolation property does not hold in general (this finding contradicts an incorrect claim to the contrary in \cite{McLean}).
This makes interpolation a somewhat unstable operation on non-Lipschitz open sets, and for this reason we do not pursue interpolation in the current paper as a means of defining Sobolev spaces on such sets.
However, for completeness we collect in Remark~\ref{rem:LionsMagenes}
some basic facts concerning the space $H^{s}_{00}(\Omega)$ on Lipschitz open sets, derived from the results presented in the current paper and in \cite{InterpolationCWHM}.

\subsection{Notation and basic definitions}
In light of the considerable variation in notation within the Sobolev space literature, we begin by clarifying the notation and the basic definitions we use.
For any subset $E\subset\R^n$ we denote the complement of $E$ by $E^c:=\R^n\setminus E$, the closure of $E$ by $\overline{E}$, and the interior of $E$ by ${\rm int}(E)$. We denote by $\dimH(E)$ the Hausdorff dimension of $E$ (cf.\ e.g.\ \cite[\S5.1]{AdHe}), and by $m(E)$ the $n$-dimensional Lebesgue measure of $E$ (for measurable $E$). For $\bx \in \R^n$ and $r>0$ we write $B_r(\bx) := \{\by\in \R^n: |\bx-\by|< r\}$ and $B_r := \{\bx\in \R^n: |\bx|<r\}$.

Throughout the paper, $\Omega$ will denote a non-empty open subset of $\R^n$, and $F$ a non-empty closed subset of $\R^n$.
We say that $\Omega$ is $C^0$ (respectively $C^{0,\alpha}$, $0<\alpha<1$, respectively Lipschitz) if its boundary $\partial\Omega$ can be locally represented as the graph (suitably rotated) of a $C^0$ (respectively $C^{0,\alpha}$, respectively Lipschitz) function from $\R^{n-1}$ to $\R$, with $\Omega$ lying only on one side of $\partial\Omega$.
For a more detailed definition see, e.g., \cite[Definition 1.2.1.1]{Gri}.
We note that for $n=1$ there is no distinction between these definitions: we interpret them all to mean that $\Omega$ is a countable union of open intervals whose closures are disjoint.

Note that in the literature several alternative definitions of Lipschitz open sets can be found (see e.g.\ the discussion in \cite{Fr:79}).
The following definitions are stronger than that given above:
Stein's ``minimally smooth domains'' in \cite[{\S}VI.3.3]{Stein}, which require all the local parametrisations of the boundary to have the same Lipschitz constant and satisfy a certain finite overlap condition;
Adams' ``strong local Lipschitz property'' in \cite[4.5]{Adams};
Ne\v{c}as' Lipschitz boundaries \cite[\S1.1.3]{NEC67};
and Definition~3.28 in \cite{McLean}, which is the most restrictive of this list as it considers only sets with bounded boundaries for which sets it is equivalent to the ``uniform cone condition'' \cite[Theorem~1.2.2.2]{Gri}.
On the other hand, Definition~1.2.1.2 in \cite{Gri} (``Lipschitz manifold with boundary'') is weaker than ours; see \cite[Theorem~1.2.1.5]{Gri}.
In this paper we study function spaces defined on \emph{arbitrary} open sets. Since some readers may be unfamiliar with open sets that fail to be $C^0$, we give a flavour of the possibilities we have in mind. We first point the reader to the examples illustrated in Figure \ref{fig:TSExamples} below (unions of polygons meeting at vertices, double bricks, curved cusps, spirals, and ``rooms and passages'' domains), all of which fail to be $C^0$ at one or more points on their boundaries. But these examples are still rather tame. A more exotic example is the Koch snowflake \cite[Figure~0.2]{Fal}, which fails to be $C^0$ at any point on its (fractal) boundary. Another class of examples we will use to illustrate many of our results (e.g.\ in \S\ref{subsec:3spaces}) is found by taking $\Omega=\Omega_0 \setminus F$, where $\Omega_0$ is a regular ($C^0$, or even Lipschitz) open set (e.g.\ a ball or a cube) and $F$ is an arbitrary non-empty closed subset of $\Omega_0$. The set $F$ may have empty interior, in which case 
$\Omega\neq {\rm int}(\overline\Omega)$. Of particular interest to us will be the case where $F$ is a fractal set. A concrete example (used in the proof of Theorem \ref{thm:notequalbig} and cf.\ Remark \ref{rem:BIE} below) is where $\Omega_0$ is a ball and $F$ is a Cantor set (an uncountable closed set with zero Lebesgue measure---see Figure \ref{fig:CantorDust} for an illustration). 
As we will see, a key role in determining properties of Sobolev spaces defined on the open set $\Omega=\Omega_0\setminus F$ is played by the maximal Sobolev regularity of distributions that are supported inside $F$, which itself is closely related to the Hausdorff dimension of $F$. 

\subsubsection{Slobodeckij--Gagliardo vs Bessel--Fourier}
For $s\in\R$, the fundamental Hilbert--Sobolev spaces on an open set $\Omega\subset \R^n$ are usually defined either
\begin{enumerate}
\item[\emph{(i)}] intrinsically, using volume integrals over $\Omega$ of squared weak (distributional) derivatives for $s\in\N_0$, Slobodeckij--Gagliardo integral norms for $0<s\notin\N$, and by duality for $s<0$ (cf.\ \cite[pp.~73--75]{McLean}); or
\item[\emph{(ii)}]
extrinsically, as the set of restrictions to $\Omega$ (in the sense of distributions) of elements of the global space $H^s(\R^n)$, which is defined for all $s\in\R$ using the Fourier transform and Bessel potentials (cf.\ \cite[pp.~75--77]{McLean}).
\end{enumerate}
Following McLean \cite{McLean}, we denote by $W^s_2(\Omega)$ the former class of spaces and by $H^s(\Omega)$ the latter.
Clearly $H^s(\Omega)\subset W^s_2(\Omega)$ for $s\geq 0$; in fact the two classes of spaces coincide and their norms are equivalent whenever there exists a continuous extension operator $W^s_2\OO\to H^s\Rn$ \cite[Theorem~3.18]{McLean};
this exists (at least for $s\geq 0$) for Lipschitz $\Omega$ with bounded boundary \cite[Theorem~A.4]{McLean}, and more generally for ``minimally smooth domains'' \cite[{\S}VI, Theorem~5]{Stein} and ``$(\varepsilon,\delta)$ locally uniform domains'' %
\cite[Definition~5 and Theorem~8]{Rogers}.
But it is easy to find examples where the two spaces are different: if $\Omega$ is Lipschitz and bounded, and
$\Omega':=\Omega\setminus\Pi$, where $\Pi$ is a hyperplane that divides $\Omega$ into two components, then $H^s(\Omega')=H^s\OO$ for $n/2<s\in\N$ as their elements require a continuous extension to $\R^n$, while the elements of $W^s_2(\Omega')$ can jump across $\Pi$, so $H^s(\Omega')\subsetneqq W^s_2(\Omega')$.

In the present paper we will only investigate the spaces $H^s(\Omega)$ and certain closed subspaces of $H^s(\R^n)$ related to $\Omega$, i.e.\ we choose option \emph{(ii)} above.
We cite two main reasons motivating this choice (see also \cite[\S3.1]{Triebel83ThFS}).

Firstly, while the intrinsic spaces $W^s_2(\Omega)$ described in option \emph{(i)} are the standard setting for BVPs posed in an open set $\Omega$ and their finite element-type discretisations, the extrinsic spaces $H^s(\Omega)$ and certain closed subspaces of $H^s(\R^n)$ arise naturally in BIE formulations.
An example (for details see
\S\ref{sec:BIE} and \cite{CoercScreen,ScreenPaper}) is the scattering of an acoustic wave propagating in $\R^{n+1}$ ($n=1$ or $2$) by a thin screen, assumed to occupy a bounded relatively open subset of the hyperplane $\{\bx\in\R^{n+1},\, x_{n+1}=0\}$. Identifying this hyperplane with $\R^n$ and the screen with an open subset $\Gamma\subset\R^n$ in the obvious way, one can impose either Dirichlet or Neumann boundary conditions on the screen by first taking a (trivial) Dirichlet or Neumann trace onto the hyperplane $\R^n$, then prescribing the value of the restriction of this trace to $\Gamma$, as an element of $H^{1/2}(\Gamma)$ or $H^{-1/2}(\Gamma)$ respectively. The solution to the associated BIE is respectively either the jump in the normal derivative of the acoustic field or the jump in the field itself across the hyperplane, these jumps naturally lying in the closed subspaces $H^{-1/2}_{\overline{\Gamma}}\subset H^{-1/2}(\R^n)$ and $H^{1/2}_{\overline{\Gamma}}\subset H^{1/2}(\R^n)$ respectively (see below
for definitions).

Secondly, on non-Lipschitz open sets $\Omega$ the intrinsic spaces $W^s_2(\Omega)$ have a number of undesirable properties.
For example, for $0<s<1$ the embedding $W^1_2(\Omega)\subset W^s_2(\Omega)$ may fail and the embedding
$W^s_2(\Omega)\subset W^0_2(\Omega)=L^2(\Omega)$ may be non-compact (see \cite[\S~9]{DiPaVa:12}).
Other pathological behaviours are described in \S1.1.4 of \cite{Maz'ya}: for $2\le \ell\in\N$, the three spaces defined by the (squared) norms $\|u\|_{L^\ell_2(\Omega)}^2:=\int_\Omega\sum_{\boldsymbol{\alpha}\in\N^n, |\boldsymbol\alpha|=\ell}|D^{\boldsymbol \alpha} u|^2\rd\bx$,
$\|u\|_{L^0_2(\Omega)}^2+\|u\|_{L^\ell_2(\Omega)}^2$ and
$\sum_{j=0}^\ell\|u\|_{L^j_2(\Omega)}^2$ may be all different from each other.

\subsubsection{``Zero trace'' spaces}
\label{sec:ZeroTrace}
In PDE applications, one often wants to work with Sobolev spaces on an open set $\Omega$ which have ``zero trace'' on the boundary of $\Omega$.
There are many different ways to define such spaces; in this paper we consider the following definitions, which are equivalent only under certain conditions on $\Omega$ and $s$ (as will be discussed in \S\ref{subsec:3spaces}):
\begin{itemize}
\item $H^s_0(\Omega)$, the closure in $H^s(\Omega)$ of the space of smooth, compactly supported functions on $\Omega$.
\item $\tH^s(\Omega)$, the closure in $H^s(\R^n)$ of the space of smooth, compactly supported functions on $\Omega$.
\item $H^s_{\overline\Omega}$, the set of those distributions in $H^s(\R^n)$ whose support lies in the closure $\overline\Omega$.
\item $\cirHs(\Omega)$, defined for $s\ge0$ as the set of those distributions in $H^s(\R^n)$ that are equal to zero almost everywhere in the complement of $\Omega$.
\end{itemize}
$H^s_0(\Omega)$, being a closed subspace of $H^s(\Omega)$, is a space of distributions on $\Omega$, while $\tH^s(\Omega)$, $H^s_{\overline\Omega}$ and $\cirHs(\Omega)$, all being closed subspaces of $H^s\Rn$, are spaces of distributions on $\R^n$ (which can sometimes be embedded in $H^s(\Omega)$ or $H^s_0(\Omega)$, as we will see). All the notation above is borrowed from \cite{McLean} (see also \cite{HsWe08,Steinbach,ChPi}), except the notation $\cirHs(\Omega)$ which we introduce here (essentially the same space is denoted $\tilde W^s_2(\Omega)$ in \cite{Gri}).

We remark that for Lipschitz or smoother open sets $\Omega$, the above spaces are classically characterised as kernels of suitable trace operators (e.g.\ \cite[Theorem~3.40]{McLean}, \cite[Theorem~1.5.1.5]{Gri}, \cite[Chapter 1, Theorem~11.5]{LiMaI}). %
Trace spaces on closed sets $F\subset\R^n$ with empty interior (e.g.\ finite unions of submanifolds of $\R^n$, or fractals such as Cantor sets) are sometimes defined as quotient spaces, e.g.\ \cite[Definition~6.1]{ClHi:13} considers
the space $H^{1/2}([F])$, defined as $H^{1/2}([F]):=
W^1_2(\R^n)/\overline{\scrD(\R^n\setminus F)}^{W^1_2(\R^n\setminus F)}$; other similar trace spaces are $H^s\Rn/\tH^s(\R^n\setminus F)$
and $H^s(\R^n\setminus F)/H^s_0(\R^n\setminus F)$.
While we do not discuss such trace operators or trace spaces in this paper, we point out that our results in \S\ref{subsec:DiffDoms} and %
\S\ref{subsec:Hs0vsHs},
respectively, describe precisely when the latter two trace spaces are or are not trivial.

\subsection{Overview of main results}
We now outline the structure of the paper and summarise our main results.

\paragraph{Preliminary Hilbert space results.}
In \S\ref{sec:hs} we recall some basic facts regarding (complex) Hilbert spaces that we use later to construct unitary isomorphisms between Sobolev spaces and their duals.
The key result in \S\ref{sec:DualSpaceRealisations} (stated as Lemma~\ref{lem:hs_orth}) is that given a unitary realisation $\cH$ of the dual of a Hilbert space $H$ and a closed subspace $V\subset H$, the dual of $V$ can be realised unitarily in a natural way as the orthogonal complement of the annihilator of $V$ in $\cH$.
In \S\ref{subsec:ApproxVar} we consider sequences of continuous and coercive variational equations posed in nested (either increasing or decreasing) Hilbert spaces, and prove the convergence of their solutions under suitable assumptions, using arguments based on C\'ea's lemma.
These results are used in \S\ref{sec:BIE} to study the limiting behaviour of solutions of BIEs on sequences of Lipschitz open sets $\Gamma_j$, including cases where $\Gamma_j$ converges as $j\to \infty$ to a closed fractal set, or to an open set with a fractal boundary.

\paragraph{Sobolev space definitions.}
In \S\ref{subsec:SobolevDef} we recall the precise definitions and basic properties of the function spaces $H^s(\R^n)$, $H^s(\Omega)$, $H^s_0(\Omega)$, $\tH^s(\Omega)$, $\cirHs(\Omega)$, and $H^s_F\subset H^s(\R^n)$ introduced above. Our presentation closely follows that of \cite[Chapter~3]{McLean}.

\paragraph{Duality.}
In \S\ref{subsec:DualAnnih} we describe natural unitary realisations of the duals of the Sobolev spaces introduced in \S\ref{subsec:SobolevDef}. By ``natural'' we mean that the duality pairing extends the $L^2$ inner product, and/or the action of a distribution on a test function.
For example, the dual space of $H^{s}(\Omega)$ can be naturally and unitarily identified with the space $\tH^{-s}(\Omega)$, and vice versa. This is very well known for $\Omega$ sufficiently regular
(e.g.\ Lipschitz with bounded boundary, e.g., \cite[Theorem 3.30]{McLean})
but our proof based on the abstract Hilbert space results in \S\ref{sec:hs} makes clear that the geometry of $\Omega$ is quite irrelevant; the result holds for any $\Omega$ (see Theorem \ref{thm:DualityTheorem}). We also provide what appear to be new realisations of the dual spaces of $H^s_F$ and $H^s_0(\Omega)$.

\paragraph{$s$-nullity.}
In \S\ref{subsec:Polarity} we introduce the concept of $s$-nullity, a measure of the negligibility of a set in terms of Sobolev regularity. This concept will play a prominent role throughout the paper, and many of our key results relating different Sobolev spaces will be stated in terms of the $s$-nullity (or otherwise) of the set on which a Sobolev space is defined, of its boundary, or of the symmetric difference between two sets. For $s\in\R$ we say a set $E\subset\R^n$ is $s$-null if there are no non-zero elements of $H^s(\R^n)$ supported in $E$. (Some other authors \cite{HoLi:56,Li:67a,Li:67b,Maz'ya} refer to such sets as ``$(-s,2)$-polar sets'', or \cite{AdHe,Maz'ya} as sets of uniqueness for $H^s(\R^n)$; for a more detailed discussion of terminology see Remark \ref{rem:polarity}.)
In Lemma \ref{lem:polarity} we collect a number of results concerning $s$-nullity and its relationship to analytical and geometrical properties of sets (for example Hausdorff dimension) that have recently been derived in \cite{HewMoi:15} using potential theoretic results on set capacities taken from \cite{Maz'ya,AdHe}.

\paragraph{Spaces defined on different subsets of $\R^n$.}
Given two different Lipschitz open sets $\Omega_1,\Omega_2\subset\R^n$, the symmetric difference $(\Omega_1\cup\Omega_2)\setminus(\Omega_1\cap\Omega_2)$ has non-empty interior, and hence the Sobolev spaces related to $\Omega_1$ and $\Omega_2$ are different, in particular
$\tH^s(\Omega_1)\neq\tH^s(\Omega_2)$.
If the Lipschitz assumption is lifted the situation is different: for example, from a Lipschitz open set $\Omega$ one can subtract any closed set with empty interior (e.g.\ a point, a convergent sequence of points together with its limit, a closed line segment, curve or other higher dimensional manifold, or a more exotic fractal set) and what is left will be again an open set $\Omega'$.
In which cases is $\tH^s\OO=\tH^s(\Omega')$?
When is $H^s_{\Omega^c}=H^s_{{\Omega^{'}}^c}$? And how is $H^s\OO$ related to $H^s(\Omega')$?
In \S\ref{subsec:DiffDoms} we answer these questions precisely in terms of $s$-nullity.

\paragraph{Comparison between the ``zero-trace'' subspaces of $H^s(\R^n)$.}
The three spaces $\tH^s(\Omega)$, $H^s_{\overline\Omega}$ and $\cirHs(\Omega)$ are all closed subspaces of $H^s\Rn$. For arbitrary $\Omega$ they satisfy the inclusions
$$
\tH^s\OO\subset\cirHs\OO\subset H^s_{\overline\Omega}
$$
(with $\cirHs\OO$ present only for $s\ge0$).
In \S\ref{subsec:3spaces} we describe conditions under which the above inclusions are or are not equalities. For example, it is well known (e.g.\ \cite[Theorem 3.29]{McLean}) that when $\Omega$ is $C^0$ the three spaces coincide. A main novelty in this section is the construction of explicit counterexamples which demonstrate that this is not the case for general $\Omega$. A second is the proof, relevant to the diversity of configurations illustrated in Figure \ref{fig:TSExamples}, that $\tH^s(\Omega) = H^s_{\overline{\Omega}}$ for %
$|s| \leq 1/2$ ($|s|\leq 1$ for $n\geq 2$)
for a class of open sets whose boundaries, roughly speaking, fail to be $C^0$ at a countable number of points.

 \paragraph{When is $H^s_0(\Omega)=H^s(\Omega)$?}
 In \S\ref{subsec:Hs0vsHs} we investigate the question of when $H^s_0(\Omega)$ is or is not equal to $H^s(\Omega)$. One classical result (see \cite[Theorem 1.4.2.4]{Gri} or \cite[Theorem 3.40]{McLean}) is that if $\Omega$ is Lipschitz and bounded then $H^s_0(\Omega)=H^s(\Omega)$ for $0\leq s\leq 1/2$. Using the dual space realisations derived in \S\ref{subsec:DualAnnih} we show that, for arbitrary $\Omega$, equality of $H^s_0(\Omega)$ and $H^s(\Omega)$ is equivalent to a certain subspace of $H^{-s}(\R^n)$ being trivial. From this we deduce a number of necessary and sufficient conditions for equality, many of which appear to be new;
in particular our results linking the equality of $H^s_0(\Omega)$ and $H^s(\Omega)$ to the fractal dimension of $\partial\Omega$ improve related results presented in \cite{Ca:00}.

 \paragraph{The restriction operator.}
 One feature of this paper is that we take care to distinguish between spaces of distributions defined on $\R^n$ (including $H^s(\R^n)$, $\tH^s(\Omega)$,$\cirHs(\Omega)$, $H^s_{\overline\Omega}$) and spaces of distributions defined on $\Omega$ (including $H^s_0(\Omega)$, $H^s(\Omega)$).
 The link between the two is provided by the restriction operator $|_\Omega:H^s(\R^n)\to H^s(\Omega)$. In \S\ref{subsec:restriction} we collect results from \cite{Hs0paper} on its mapping properties (injectivity, surjectivity, unitarity).
In Remark~\ref{rem:LionsMagenes} we briefly mention the relationship of $\tH^s\OO$ and $H^s_0\OO$ with the classical Lions--Magenes space $H^s_{00}\OO$ (defined by interpolation), using results recently derived in \cite{InterpolationCWHM}.

\paragraph{Sequences of subsets.}
Many of the best-known fractals (for example Cantor sets, Cantor dusts, the Koch snowflake, the Sierpinski carpet, and the Menger sponge) are defined by taking the union or intersection of an infinite sequence of simpler, nested ``prefractal'' sets. In \S\ref{subsec:Seqs+Eqs} we determine which of the Sobolev spaces defined on the limiting set naturally emerges as the limit of the spaces defined on the approximating sets.
This question is %
relevant when the different spaces on the limit set do not coincide, e.g.\ when $\tH^s\OO\subsetneqq H^s_{\overline\Omega}$.
In this case the correct function space setting depends on whether the limiting set is to be approximated from ``inside'' (as a union of nested open sets), or from the ``outside'' (as an intersection of nested closed sets).

\paragraph{Boundary integral equations on fractal screens.}
\S\ref{sec:BIE} contains the major application of the paper, namely the BIE formulation of acoustic (scalar) wave scattering by fractal screens. We show how the Sobolev spaces $H^s(\Omega)$, $\tH^s(\Omega)$, $H^s_{F}$ all arise naturally in such problems,  pulling together many of the diverse results proved in the other sections of the paper.  In particular, we study the limiting behaviour as $j\to\infty$ of the solution in the fractional Sobolev space $\tH^{\pm 1/2}(\Gamma_j)$ of the BIE on the sequence of regular screens $\Gamma_j$, focussing particularly on cases where $\Gamma_j$ is a sequence of prefractal approximations to a limiting screen $\Gamma$ that is fractal or has fractal boundary.

\section{Preliminary Hilbert space results} \label{sec:hs}

In this section we summarise the elementary Hilbert space theory which underpins our later discussions.

We say that a mapping $\iota: H_1\to H_2$ between topological vector spaces $H_1$ and $H_2$ is an \emph{embedding} if it is linear, continuous, and injective, and indicate this by writing $H_1 \hookrightarrow_\iota H_2$, abbreviated as $H_1 \hookrightarrow H_2$ when the embedding $\iota$ is clear from the context.
We say that a mapping $\iota: H_1\to H_2$
is an \emph{isomorphism} if $\iota$ is linear and a homeomorphism. %
If $H_1$ and $H_2$ are Banach spaces and, additionally, the mapping is isometric (preserves the norm) then we say that $\iota$ is an \emph{isometric isomorphism}. If $H_1$ and $H_2$ are Hilbert spaces and, furthermore, $\iota$ preserves the inner product,
then we say that $\iota$ is a \emph{unitary isomorphism} (the terms $H$-isomorphism and Hilbert space isomorphism are also commonly used), and we write $H_1 \cong_{\iota} H_2$.
We recall that an isomorphism between Hilbert spaces is unitary if and only if it is isometric \cite[Proposition 5.2]{Conway}.

From now on let $H$ denote a complex Hilbert space with inner product $(\cdot,\cdot)_H$, and $H^*$ its dual space (all our results hold for real spaces as well, with the obvious adjustments).
Following, e.g., Kato \cite{Ka:95} we take $H^*$ to be the space of {\em anti-linear} continuous functionals on $H$ (sometimes called the {\em anti-dual}), this choice simplifying some of the notation and statement of results. The space $H^*$ is itself a Banach space with the usual induced operator norm. Further, it is an elementary result %
that the so-called {\em Riesz isomorphism}, the mapping $R:H\to H^*$  which maps $\phi\in H$ to the anti-linear functional $\ell_\phi\in H^*$, given by $\ell_\phi(\psi)= (\phi,\psi)_H$, for $\psi\in H$, is an isometric isomorphism. This provides a natural identification of the Banach space $H^*$ with $H$ itself.
Moreover, this mapping allows us to define an inner product $(\cdot,\cdot)_{H^*}$ on $H^*$, by the requirement that $(\phi,\psi)_H = (\ell_\phi,\ell_\psi)_{H^*}$, $\phi,\psi\in H$, and this inner product is compatible with the norm on $H^*$. With this canonical inner product $H^*$ is itself a Hilbert space and the Riesz isomorphism is a unitary isomorphism%
\footnote{As for Kato \cite{Ka:95}, a large part of our preference for our dual space convention (that our functionals are anti-linear rather than linear) is that the Riesz mapping is an isomorphism. If one prefers to work with linear functionals one can construct an isomorphism between the spaces of continuous linear and anti-linear functionals; indeed, in many important  cases there is a canonical choice for this isomorphism. %
Precisely, if $\psi\mapsto \psi^*$ is any anti-linear isometric involution on $H$ (sometimes called a conjugate map, and easily constructed using an orthogonal basis for $H$, e.g., \cite[Conclusion 2.1.18]{sauter-schwab11}) the map $\phi^*\mapsto\phi$, from the Hilbert space of continuous anti-linear functionals to the space of continuous linear functionals, defined by $\phi(\psi) = \phi^*(\psi^*)$, $\psi \in H$, is a unitary isomorphism. In general there is no natural choice for this conjugate map, but when, as in \S\ref{sec:ss} onwards, $H$ is a space of complex-valued functions the canonical choice is $\psi^*=\overline{\psi}$. When $H$ is real all this is moot; linear and anti-linear coincide.}.

\subsection{Realisations of dual spaces}\label{sec:DualSpaceRealisations}
It is frequently convenient, e.g.\ when working with Sobolev spaces, to identify the dual space $H^*$ not with $H$ itself but with another Hilbert space $\mathcal{H}$.
If $\cI:\cH\to H^*$ is a unitary isomorphism
then we say that $(\cH,\cI)$
is a {\em unitary realisation} of $H^*$, and %
\begin{equation} \label{dp}
\langle\psi, \phi\rangle := \cI\psi(\phi), \quad \phi\in H, \psi\in \cH,
\end{equation}
defines a bounded sesquilinear form on $\cH\times {H}$, called the {\em duality pairing}.

The following lemma shows that, given a unitary realisation $(\cH, \cI)$ of $H^*$, there is a natural unitary isomorphism $\cIstar:H\to \cH^*$, so that $(H, \cIstar)$ is a realisation of $\cH^*$.
The operator $\cIstar$ is the adjoint operator of $\cI$ after the canonical identification of $H$ with its bidual $H^{**}$.
\begin{lem} \label{dual_lem}
If $H$ and $\cH$ are Hilbert spaces and $\cI:\cH\to H^*$ is a unitary isomorphism, then $\cIstar:H\to \cH^*$, given by $\cIstar\phi(\psi) = \overline{\cI\psi(\phi)}$, for $\phi\in H$ and $\psi \in \cH$,
is a unitary isomorphism, and the corresponding duality pairing $\langle \cdot,\cdot\rangle$ on $H\times \cH$ is
\begin{equation*}%
\langle\phi, \psi\rangle   :=\cIstar\phi(\psi) = \overline{\langle \psi, \phi\rangle}, \quad \phi\in H, \psi\in \cH,
\end{equation*}
where the duality pairing on the right hand side is that on $\cH\times {H}$, as defined in \eqref{dp}.
\end{lem}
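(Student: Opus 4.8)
The plan is to verify directly that the formula $\cIstar\phi(\psi) := \overline{\cI\psi(\phi)}$ defines, for each $\phi \in H$, an element of $\cH^*$ (i.e.\ a continuous \emph{anti-linear} functional on $\cH$), and then to show that the resulting map $\cIstar : H \to \cH^*$ is linear, bijective, and isometric; by the remark in \S\ref{sec:hs} that an isomorphism of Hilbert spaces is unitary iff it is isometric (\cite[Proposition 5.2]{Conway}), this will give the claim. The final identity $\langle\phi,\psi\rangle = \overline{\langle\psi,\phi\rangle}$ is then immediate by unwinding the two definitions in \eqref{dp}.

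First I would check the anti-linearity and boundedness of $\psi \mapsto \overline{\cI\psi(\phi)}$ for fixed $\phi$. Since $\cI$ is linear and each $\cI\psi$ is an anti-linear functional on $H$, the map $\psi \mapsto \cI\psi(\phi)$ is linear in $\psi$; taking the complex conjugate makes it anti-linear, so $\cIstar\phi \in \cH^*$. Boundedness follows from $|\cIstar\phi(\psi)| = |\cI\psi(\phi)| \le \|\cI\psi\|_{H^*}\|\phi\|_H = \|\psi\|_\cH \|\phi\|_H$, using that $\cI$ is unitary (hence isometric). The same estimate shows $\|\cIstar\phi\|_{\cH^*} \le \|\phi\|_H$. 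Next, linearity of $\cIstar$ in $\phi$: for each fixed $\psi$, $\phi \mapsto \cI\psi(\phi)$ is anti-linear (as $\cI\psi \in H^*$), so $\phi \mapsto \overline{\cI\psi(\phi)}$ is linear, hence $\cIstar(\alpha\phi_1 + \phi_2) = \alpha\,\cIstar\phi_1 + \cIstar\phi_2$.

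For the reverse norm inequality and surjectivity it is cleanest to exploit the structure directly. Let $R_H : H \to H^*$ and $R_\cH : \cH \to \cH^*$ denote the respective Riesz isomorphisms, which are unitary. I claim $\cIstar = R_\cH \circ \cI^{-1} \circ R_H$, or equivalently that $\cI^* := R_\cH^{-1} \cIstar R_H^{-1}$ equals $\cI^{-1}$ — which is just the statement that $\cI^*$ is the Hilbert-space adjoint of $\cI$ (a unitary operator has adjoint equal to its inverse). To see the claim, fix $\phi \in H$ and write $\phi = R_H^{-1}\ell$ with $\ell \in H^*$; one computes, for $\psi \in \cH$, that $\cIstar\phi(\psi) = \overline{\cI\psi(\phi)} = \overline{\ell(\phi)\,\text{-type pairing}}$ — more precisely, writing $\cI\psi = R_H \eta$ for the appropriate $\eta = \cI^{-1}(\cI\psi) $... this is where I must be careful and instead just verify the adjoint relation $(\cI\psi, \ell)_{H^*} = (\psi, \cI^{-1}\ell)_\cH$ directly from unitarity of $\cI$. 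Once $\cIstar$ is identified as a composition of three unitary maps, it is unitary, and in particular bijective and isometric; this simultaneously delivers the missing inequality $\|\cIstar\phi\|_{\cH^*} \ge \|\phi\|_H$ and surjectivity onto $\cH^*$. The main obstacle — really the only subtle point — is bookkeeping the anti-linear/conjugate-linear conventions correctly through the chain of Riesz identifications, since the defining formula for $\cIstar$ carries an explicit complex conjugate precisely to compensate for the anti-duality convention; I would be especially careful that $\cIstar\phi$ comes out anti-linear (an element of the anti-dual $\cH^*$) rather than linear, and that the final displayed identity $\langle\phi,\psi\rangle = \overline{\langle\psi,\phi\rangle}$ has the conjugate on the correct side.

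Finally, the duality-pairing identity follows by definition: by \eqref{dp} applied to the realisation $(H,\cIstar)$ of $\cH^*$, we have $\langle\phi,\psi\rangle = \cIstar\phi(\psi) = \overline{\cI\psi(\phi)} = \overline{\langle\psi,\phi\rangle}$, with the last duality pairing being the one on $\cH \times H$ from \eqref{dp}. This requires no further work once $\cIstar$ has been shown to be a unitary isomorphism.
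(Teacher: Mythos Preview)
Your proposal is correct and follows essentially the same route as the paper: the key step is the identification $\cIstar = R_\cH\,\cI^{-1}\,R_H$ as a composition of three unitary isomorphisms. The paper dispenses with your preliminary checks and simply writes down the chain
\[
\cIstar\phi(\psi) = \overline{\cI\psi(\phi)} = \overline{(R_H^{-1}\cI\psi,\phi)_H} = (\phi,R_H^{-1}\cI\psi)_H = (\cI^{-1}R_H\phi,\psi)_\cH = R_\cH\cI^{-1}R_H\phi(\psi),
\]
which is the clean version of the computation you began and then abandoned (``this is where I must be careful\ldots''); you would do well to replace your somewhat circuitous adjoint discussion with this direct line.
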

\begin{proof}
For $\phi\in H$ and $\psi\in \cH$, where ${R}:{H}\to {H}^*$ and ${\mathcal R}:\cH\to \cH^*$ are the Riesz isomorphisms,
\begin{align*} %
\cIstar\phi(\psi) = \overline{\cI\psi(\phi)} = \overline{(R^{-1}\cI\psi,\phi)_H} =(\phi,R^{-1}\cI\psi)_H&= (\cI^{-1}R\phi, \psi)_{\cH}  \\&= {\mathcal R}\cI^{-1}R\phi(\psi),
\end{align*}
so that $\cIstar = {\mathcal R}\cI^{-1}R$ is a composition of unitary isomorphisms, and hence a unitary isomorphism.
\end{proof}
Similarly, there is associated to $(\cH,\cI)$ a natural unitary isomorphism  $j:H\to \cH$ defined by $j= \cI^{-1}R$, where $R:H\to H^*$ is the Riesz isomorphism.

For a subset $V\subset H$, we denote by $V^\perp$ the subset of $H$ orthogonal to $V$, a closed linear subspace of $H$. When $V$ is itself a closed linear subspace, in which case $V^\perp$ is termed the orthogonal complement of $V$, we can define $P:H\to V$ ({\em orthogonal projection onto $V$}) by $P\phi=\psi$, where $\psi$ is the  best approximation to $\phi$ from $V$. This mapping is linear and bounded with $\|P\|=1$ and $P=P^2=P^*$, where $P^*:H\to H$ is the Hilbert-space adjoint operator of $P$.
$P$ has range $P(H)=V$ and kernel $\ker(P)= V^\perp$; moreover $H=V\oplus V^\perp$, and $V^{\perp\perp} = V$.
Furthermore, if $(\cH,\cI)$ is a unitary realisation of $H^*$ and $\langle \cdot, \cdot\rangle$ is the associated duality pairing (as in \eqref{dp}),
we define, %
for any subset $V\subset H$,
\begin{align}\label{eq:AnnihilatorDef}
V^{a,\mathcal{H}} := \{\psi\in \cH:\langle \psi,\phi\rangle = 0, \mbox{ for all }\phi\in V\}\subset\cH,
\end{align}
this the {\em annihilator of $V$ in $\cH$}.  For $\phi,\psi\in H$, $\langle j\psi,\phi \rangle =R\psi(\phi) =(\psi,\phi)_H$, so that $V^{a,\mathcal{H}} = j(V^\perp)$. When $V$ is a closed linear subspace of $H$, since $j$ preserves orthogonality and $V^{\perp\perp}=V$, we have
\begin{equation} \label{eq:AnnihilatorResult}
(V^\perp)^{a,\mathcal{H}}=j(V)= \left(V^{a,\mathcal{H}}\right)^\perp,
\quad \textrm{ and } \quad \left(V^{a,\mathcal{H}}\right)^{a,H} = j^{-1}\big((V^{a,\mathcal{H}})^\perp\big) = V.
\end{equation}

Given a linear subspace $V\subset H$ we can form the {\em quotient space} $H/V$ as $\{\phi+V:\phi\in H\}$. If $V$ is closed then $H/V$ is a Banach space, with norm
\begin{equation} \label{qsn}
\|\phi+V\|_{H/V} := \inf_{\psi\in V} \|\phi+\psi\|_H = \|Q \phi\|_H,
\end{equation}
where $Q:H\to V^\perp$ is orthogonal projection.
The mapping $Q_/:H/V\to V^\perp$, defined by $Q_/(\phi+V) = Q\phi$, is clearly surjective and so an isometric isomorphism. Defining an inner product compatible with the norm on  $H/V$ by $(\phi+V,\psi+V)_{H/V} = (Q\phi,Q\psi)_H$, $H/V$ becomes a Hilbert space and $Q_/$ a unitary isomorphism, i.e.
\begin{equation*} %
H/V \cong_{Q_/} V^\perp.
\end{equation*}

A situation which arises frequently in Sobolev space theory is where we have identified a particular unitary realisation $(\cH,\cI)$ of a dual space $H^*$ and we seek a unitary realisation of $V^*$, where $V$ is a closed linear subspace of $H$.
The following result shows that an associated natural unitary realisation of $V^*$ is $({\mathcal V}, \cI_\cV)$, where $\cV=\left(V^{a,\mathcal{H}}\right)^\perp\subset \cH$ and $\cI_\cV$ is the restriction of $\cI$ to $\cV$. This is actually a special case of a more general Banach space result, e.g.\ \cite[Theorem 4.9]{Ru91}, but since it plays such a key role in later results, for ease of reference we restate it here restricted to our Hilbert space context, and provide the short proof.

\begin{lem} \label{lem:hs_orth} Suppose that $H$ and $\cH$ are Hilbert spaces, $\cI:\cH\to H^*$ is a unitary isomorphism,
and $V\subset H$ is a closed linear subspace.
Set ${\mathcal V} := \left(V^{a,\mathcal{H}}\right)^\perp\subset \cH$,  and define $\cI_\cV: {\mathcal V}\to V^*$  by $\cI_\cV\psi(\phi)=\cI\psi(\phi)$, for $\phi\in V, \psi\in {\mathcal V}$.
Then $({\mathcal V},\cI_\cV)$ is a unitary realisation of $V^*$, with duality pairing
\begin{equation*} %
\langle\psi,\phi\rangle_V:= \cI_\cV\psi(\phi) =\langle\psi,\phi\rangle, \quad \phi\in V, \psi\in {\mathcal V},
\end{equation*}
where $\langle\cdot,\cdot\rangle$ is the duality pairing on $\cH\times H$ given by \eqref{dp}.
\end{lem}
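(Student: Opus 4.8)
The plan is to verify the three required properties of $(\cV, \cI_\cV)$ in turn: that $\cI_\cV$ is well-defined, that it is injective and surjective onto $V^*$, and that it is unitary (equivalently, by the remark after the definition of unitary isomorphism, that it is isometric). The key organising idea is to factor $\cI_\cV$ through the restriction map $H^* \to V^*$ and the known unitary realisation $H/V \cong_{Q_/} V^\perp$ already established in the excerpt, together with the annihilator identities in \eqref{eq:AnnihilatorResult}.

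First I would pin down the relevant maps. Recall from the excerpt that $V^{a,\cH} = j(V^\perp)$, where $j = \cI^{-1} R: H \to \cH$ is the natural unitary isomorphism. Hence $\cV = (V^{a,\cH})^\perp = (j(V^\perp))^\perp = j((V^\perp)^\perp) = j(V)$, using that $j$ preserves orthogonality and $V^{\perp\perp} = V$. So $\cV$ is precisely $j(V)$, and $\cI_\cV$ is just $\cI$ restricted to $j(V)$, composed with the restriction-of-functional map. Now for $\psi \in \cV$ the functional $\cI\psi \in H^*$ restricts to an element of $V^*$ simply by restricting its domain to $V$; this is manifestly anti-linear and continuous, so $\cI_\cV$ is well-defined, and it is clearly linear in $\psi$.

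For injectivity: if $\cI_\cV\psi = 0$ then $\cI\psi$ vanishes on $V$, i.e.\ $\psi \in V^{a,\cH}$; but $\psi \in \cV = (V^{a,\cH})^\perp$ as well, so $\psi \in V^{a,\cH} \cap (V^{a,\cH})^\perp = \{0\}$. For surjectivity: given $f \in V^*$, extend it to $H$ — either by Hahn--Banach, or more concretely by $\tilde f := f \circ P$ where $P: H \to V$ is the orthogonal projection, which is a continuous anti-linear functional on $H$ agreeing with $f$ on $V$. Then $\tilde f = \cI\chi$ for a unique $\chi \in \cH$ since $\cI$ is onto $H^*$. Write $\chi = \psi + \psi'$ with $\psi \in \cV$, $\psi' \in V^{a,\cH}$ (orthogonal decomposition of $\cH$). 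Then for $\phi \in V$, $\cI\psi(\phi) = \cI\chi(\phi) - \cI\psi'(\phi) = \tilde f(\phi) - 0 = f(\phi)$, so $\cI_\cV\psi = f$. This shows $\cI_\cV$ is a bijection $\cV \to V^*$, and the displayed duality pairing formula $\langle\psi,\phi\rangle_V = \cI_\cV\psi(\phi) = \cI\psi(\phi) = \langle\psi,\phi\rangle$ for $\phi \in V$, $\psi \in \cV$ is immediate from the definitions.

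It remains to show $\cI_\cV$ is unitary; it suffices to show it is isometric. For $\psi \in \cV$, I would compute $\|\cI_\cV\psi\|_{V^*} = \sup_{0 \neq \phi \in V} |\cI\psi(\phi)|/\|\phi\|_H$. Since $\cI$ is unitary, $\|\cI\psi\|_{H^*} = \|\psi\|_\cH$, and the sup defining this is over all $0 \neq \phi \in H$; restricting the sup to $\phi \in V$ can only decrease it, so $\|\cI_\cV\psi\|_{V^*} \leq \|\psi\|_\cH$. For the reverse inequality, since $\psi \in \cV = j(V)$, write $\psi = j\eta$ with $\eta \in V$; then, as noted in the excerpt, $\cI\psi(\phi) = \langle j\eta, \phi\rangle = (\eta,\phi)_H$ for all $\phi \in H$, and in particular taking $\phi = \eta \in V$ gives $|\cI\psi(\eta)| = \|\eta\|_H^2$, while $\|\eta\|_H = \|j\eta\|_\cH = \|\psi\|_\cH$ since $j$ is unitary; hence $\|\cI_\cV\psi\|_{V^*} \geq \|\eta\|_H^2/\|\eta\|_H = \|\eta\|_H = \|\psi\|_\cH$. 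Combining the two bounds yields $\|\cI_\cV\psi\|_{V^*} = \|\psi\|_\cH$, so $\cI_\cV$ is an isometric isomorphism between Hilbert spaces and therefore unitary, completing the proof.

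The main obstacle, such as it is, is purely bookkeeping: keeping straight the chain of identifications $\cV = (V^{a,\cH})^\perp = j(V)$ and using the identity $\cI\psi(\phi) = (\eta,\phi)_H$ (valid for $\psi = j\eta$) at exactly the right point to get the lower norm bound. Once one recognises that $\cV$ is just $j(V)$, everything reduces to the already-established facts about the Riesz map, orthogonal projections, and the realisation $H/V \cong V^\perp$; there is no genuine analytic difficulty.
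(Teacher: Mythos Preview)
Your proof is correct. Both you and the paper use the key identification $\cV = (V^{a,\cH})^\perp = j(V)$ and the identity $\langle j\eta,\phi\rangle = (\eta,\phi)_H$, but the routes from there differ slightly. The paper observes in one line that $\cI_\cV = R_V \circ (j^{-1}|_\cV)$, where $R_V:V\to V^*$ is the Riesz map on $V$ and $j^{-1}|_\cV:\cV\to V$ is the restriction of the unitary $j^{-1}$; since both factors are unitary isomorphisms, so is their composition, and the proof is complete. You instead verify injectivity, surjectivity, and the isometry property by hand. Your approach is more elementary and self-contained (no appeal to the Riesz map on $V$), while the paper's is shorter and makes the structural reason for unitarity transparent: $\cI_\cV$ is literally a composition of two unitary maps. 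Either way the content is the same; your argument is effectively an unpacking of the paper's factorisation.
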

\begin{proof}
As above, let $R:H\to H^*$ be the Riesz isomorphism and $j:= \cI^{-1}R:H\to \cH$, both unitary isomorphisms. $(V,R_V)$ is a unitary realisation of $V^*$,  where $R_V:V\to V^*$ is the Riesz isomorphism. Thus, since ${\mathcal V} = j(V)$ by \eqref{eq:AnnihilatorResult}, another unitary realisation is $({\mathcal V},R_V j^{-1}|_{\mathcal V})$. Further, for $\phi\in V$, $\psi\in {\mathcal V}$,
\begin{align*}
\label{}
 R_Vj^{-1}\psi(\phi) = (j^{-1}\psi, \phi)_V = (j^{-1}\psi, \phi)_H = Rj^{-1}\psi(\phi) = \cI\psi(\phi) &=\langle\psi,\phi\rangle \\ &= \cI_\cV\psi(\phi),
\end{align*}
 so that $\cI_\cV = R_Vj^{-1}|_{\mathcal V}$.
\end{proof}

\begin{rem} \label{rem:orth}
Lemma \ref{lem:hs_orth} gives a natural unitary realisation of the dual space of a closed subspace $V$ of a Hilbert space $H$. This lemma applies in particular to the closed subspace $V^\perp$. In view of \eqref{eq:AnnihilatorResult} and Lemma \ref{lem:hs_orth} we have that
$({\mathcal V}^\perp,\cI_{\cV^\perp})$ is a unitary realisation of $(V^\perp)^*$, with ${\mathcal V}^\perp =V^{a,\mathcal{H}}$ and $\cI_{\cV^\perp}\psi(\phi) =\langle\psi,\phi\rangle$, $\phi\in V^\perp, \psi\in {\mathcal V}^\perp$.
\end{rem}

Figure \ref{fig:Hilbert} illustrates as connected commutative diagrams the spaces in this section and key elements of the proofs of the above lemmas.

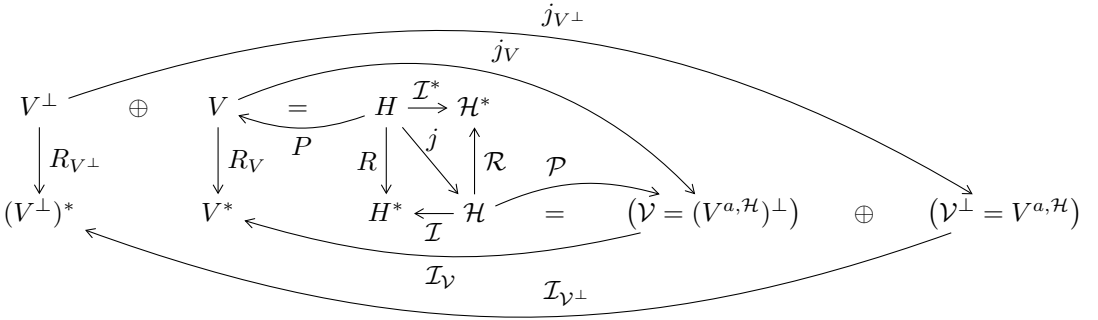
\begin{figure}[htb!]
\begin{center}
\begin{tikzpicture}
\hspace{-15mm}
\matrix[matrix of math nodes,
column sep={13pt},  %
row sep={40pt,between origins}, %
text height=1.5ex, text depth=0.25ex] (s)
{
|[name=Vp]|V^\perp & |[name=plus]|\oplus & |[name=V]|V & |[name=eq]|=&
|[name=H]|H & |[name=cHs]|\mathcal H^*\\
|[name=Vps]|(V^\perp)^* &  & |[name=Vs]|V^* & & |[name=Hs]|H^* & |[name=cH]|\mathcal H
& |[name=ceq]|=&
|[name=cV]|\big(\mathcal{V}=(V^{a,\mathcal H})^\perp\big) & |[name=cplus]|\oplus &
|[name=cVp]|\big(\mathcal V^\perp=V^{a,\mathcal H}\big) \\
};
\draw[->,>=angle 60] %
 (H) edge node[auto] {\(\cIstar\)} (cHs)
 (cH) edge node[auto] {\(\mathcal I\)} (Hs)
 (H) edge node[auto] {\(\!\!\!j\)} (cH)
 (H) edge node[auto,swap] {\(R\)} (Hs)
 (cH) edge node[auto,swap] {\(\mathcal R\)} (cHs)
 (V) edge node[auto] {\(R_V\)} (Vs)
 (Vp) edge node[auto] {\(R_{V^\perp}\)} (Vps)
 (cV) edge[bend left=15] node[auto] {\(\mathcal I_{\mathcal V}\)} (Vs)
 (cVp) edge[bend left=20] node[auto,swap,pos=0.4] {\(\mathcal I_{\mathcal V^\perp}\)} (Vps)
 (V) edge[bend left=35] node[auto] {\(j_V\)} (cV)
 (Vp) edge[bend left=25] node[auto] {\(j_{V^\perp}\)} (cVp);
\draw[->,>=angle 60] (H) edge[bend left=20] node[auto]{\(P\)} (V)
(cH) edge[bend left=20] node[auto]{\(\cP\)} (cV);
\end{tikzpicture}
\end{center}
\caption{
A representation, as two connected commutative diagrams, of the Hilbert spaces and the mappings defined in \S\ref{sec:hs};  here $j_V$ and $j_{V^\perp}$ are the restrictions of $j$ to $V$ and $V^\perp$, respectively.
Every arrow represents a unitary isomorphism, except for the two orthogonal projections $P:H\to V$ and $\cP:\cH\to\cV$.  %
\label{fig:Hilbert}}
\end{figure}

\subsection{Approximation of variational equations in nested subspaces}
\label{subsec:ApproxVar}

Let $H$ be a Hilbert space, with its dual $H^*$ realised unitarily as some Hilbert space $\cH$ and associated duality pairing $\langle\cdot,\cdot\rangle$, as in \S\ref{sec:DualSpaceRealisations}. Fix $f\in \cH$, and suppose that $a(\cdot,\cdot):H\times H\to \C$ is a sesquilinear form that is continuous and coercive, i.e.,
$\exists C,c>0$ such that
\begin{equation} \label{eq:defcoer}
|a(u,v)|\le C\|u\|_H \|v\|_H, \qquad |a(v,v)|\ge c\|v\|^2_H
\qquad \forall u,v\in H.
\end{equation}
For any closed subspace $V\subset H$ the restriction of $a(\cdot,\cdot)$ to $V\times V$ is also continuous and coercive. Thus by the Lax--Milgram lemma there exists a unique solution $u_V\in V$ to the variational equation
\begin{equation}\label{eq:VarEq}
 a(u_V,v) = \langle f,v\rangle \qquad \forall v\in V,
\end{equation}
and the solution is bounded independently of the choice of $V$, by $\|u_V\|_H\le c^{-1}\|f\|_\cH$.
Furthermore, given closed, nested subspaces $V_1\subset V_2\subset H$,
C\'ea's lemma gives the following standard bound:
\begin{align}
\|u_{V_1}-u_{V_2}\|_H
\le\frac{C} c \inf_{v_1\in V_1} \|v_1-u_{V_2}\|_H.
\label{eq:Cea}
\end{align}

Consider increasing and decreasing sequences of closed, nested subspaces indexed by $j\in\N$,
$$V_1\!\subset\!\cdots\!\subset\! V_j\!\subset \!V_{j+1}\!\subset\!\cdots\!\subset \!H \!\quad\text{ and }\quad
 H\!\supset \!W_1\!\supset\!\cdots\!\supset\! W_j\!\supset \!W_{j+1}\!\supset\!\cdots, %
 $$
and define the limit spaces $V:=\overline{\bigcup_{j\in\N} V_j}$ and $W:=\bigcap_{j\in\N} W_j$.
C\'ea's lemma \eqref{eq:Cea} immediately gives convergence of the corresponding solutions of \eqref{eq:VarEq} in the increasing case:
\begin{equation}\label{eq:V-conver}
\|u_{V_j}-u_V\|_H \le \frac{C} c \inf_{v_j\in V_j} \|v_j-u_V\|_H
\xrightarrow{j\to\infty}0.
\end{equation}
In the decreasing case the following analogous result applies.
\begin{lem} \label{lem:dec} Define $\{W_j\}_{j=1}^\infty$ and $W$ as above. Then \mbox{$\|u_{W_j}-u_W\|_H\to 0$} as $j\to\infty$.
\end{lem}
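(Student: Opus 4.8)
The plan is to mirror the increasing-case argument \eqref{eq:V-conver}, but the obstacle is that C\'ea's lemma \eqref{eq:Cea} with $V_1 = W$ and $V_2 = W_j$ bounds $\|u_W - u_{W_j}\|_H$ by $\inf_{v \in W} \|v - u_{W_j}\|_H$, and there is no obvious reason this infimum tends to zero: the approximating space $W$ is now the \emph{smaller} space, so we cannot simply approximate a fixed target by elements of the $W_j$. Instead I would work directly with the solutions. First, note that the sequence $\{u_{W_j}\}_{j \in \N}$ is bounded in $H$ by $c^{-1}\|f\|_\cH$, uniformly in $j$, so it has a weakly convergent subsequence; call its weak limit $u_* \in H$. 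Since each $W_j$ is closed and convex, hence weakly closed, and since $W_k \subset W_j$ for all $k \ge j$, the tail $\{u_{W_k}\}_{k \ge j}$ lies in $W_j$, so $u_* \in W_j$ for every $j$, i.e.\ $u_* \in W$.

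Next I would identify $u_*$ with $u_W$. Fix $v \in \bigcup_j W_j$, say $v \in W_{j_0}$; then $v \in W_j$ for all $j \le j_0$, but more usefully $v$ need not lie in the smaller spaces — so instead fix $v \in W$, which lies in every $W_j$. For each $j$ we have $a(u_{W_j}, v) = \langle f, v\rangle$, and passing to the weakly convergent subsequence and using continuity of $a(\cdot, v)$ (a bounded linear functional in the first argument) gives $a(u_*, v) = \langle f, v\rangle$ for all $v \in W$. By uniqueness of the solution in $W$ (Lax--Milgram), $u_* = u_W$. Since every weakly convergent subsequence of the bounded sequence $\{u_{W_j}\}$ has the same weak limit $u_W$, the whole sequence converges weakly to $u_W$.

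It remains to upgrade weak convergence to norm convergence, and this is the step I expect to require the most care. I would use coercivity together with the variational equations: for any $j$,
\begin{equation*}
c\|u_{W_j} - u_W\|_H^2 \le |a(u_{W_j} - u_W,\, u_{W_j} - u_W)| = |a(u_{W_j}, u_{W_j} - u_W) - a(u_W, u_{W_j} - u_W)|.
\end{equation*}
Now $u_{W_j} - u_W \in W_j$ (both terms lie in $W_j$, using $u_W \in W \subset W_j$), so $a(u_{W_j}, u_{W_j} - u_W) = \langle f, u_{W_j} - u_W\rangle$; and likewise, testing the equation for $u_W$ against $u_{W_j} - u_W \in W \subset$? — here one must be careful, since $u_{W_j} - u_W$ need not lie in $W$. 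So instead I write $c\|u_{W_j} - u_W\|_H^2 \le |\langle f, u_{W_j} - u_W\rangle - a(u_W, u_{W_j} - u_W)|$, and both terms on the right are of the form (bounded linear functional) applied to $u_{W_j} - u_W$, which converges weakly to $0$; hence the right-hand side tends to $0$, giving $\|u_{W_j} - u_W\|_H \to 0$. This completes the proof.
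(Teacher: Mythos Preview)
Your proof is correct and follows essentially the same approach as the paper: extract a weakly convergent subsequence via the uniform Lax--Milgram bound, identify the weak limit as $u_W$ by testing the variational equations against $v\in W$, upgrade to full-sequence weak convergence, and then use coercivity plus the variational equation to obtain norm convergence. Your argument is in fact slightly more careful than the paper's in one respect: you explicitly verify that the weak limit $u_*$ lies in $W$ (using weak closedness of each $W_j$), which is needed to invoke uniqueness of the solution in $W$, whereas the paper passes over this point.
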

\begin{proof}
The Lax--Milgram lemma gives that
$\|u_{W_j}\|_H \leq c^{-1} \|f\|_{\mathcal H}$,
so that $(u_{W_j})_{j=1}^\infty$ is bounded and has a weakly convergent subsequence, converging to a limit $u_*$. Further, for all $w\in W$, \eqref{eq:VarEq} gives
$$
a(u_W,w)=\langle f, w\rangle = a(u_{W_j},w) \to a(u_*,w),
$$
as $j\to\infty$ through that subsequence, so that $u_*=u_W$. By the same argument every subsequence of $(u_{W_j})_{j=1}^\infty$ has a subsequence converging weakly to $u_W$, so that $(u_{W_j})_{j=1}^\infty$ converges weakly to $u_W$. Finally, we see that%
\begin{align*}
c\|u_{W_j}-u_W\|^2_H &\leq |a(u_{W_j}-u_W,u_{W_j}-u_W)| \\ &= |\langle f,u_{W_j}\rangle - a(u_{W_j},u_W)-a(u_W,u_{W_j}-u_W)|,%
\end{align*}
which tends to 0 as $j\to \infty$, by the weak convergence of $(u_{W_j})_{j=1}^\infty$ and \eqref{eq:VarEq}.
\end{proof}

\section{Sobolev spaces} \label{sec:ss}
\subsection{Main definitions}\label{subsec:SobolevDef}
We now define the Sobolev spaces studied in this paper. Our presentation broadly follows that of \cite{McLean}.

\subsubsection{Distributions, Fourier transform and Bessel potential}
Given $n\in \N$, let $\scrD(\R^n)$ denote the space of compactly supported smooth test functions on $\R^n$, and for any open set $\Omega\subset \R^n$ let
$\scrD(\Omega):=\{u\in\scrD(\R^n):\supp{u}\subset\Omega\}$.
For $\Omega\subset \R^n$ let $\scrD^*(\Omega)$ denote the space of distributions on $\Omega$ (anti-linear continuous functionals on $\mathscr{D}(\Omega)$). With $L^1_{\rm loc}(\Omega)$ denoting the space of locally integrable functions on $\Omega$, the standard embedding $L^1_{\rm loc}(\Omega)\hookrightarrow \scrD^*(\Omega)$ is given by $u(v):=\int_\Omega u \overline{v}$ for $u\in L^1_{\rm loc}(\Omega)$ and $v\in \mathscr{D}(\Omega)$.
Let $\mathscr{S}(\R^n)$ denote the Schwartz space of rapidly decaying smooth test functions on $\R^n$, and  $\mathscr{S}^*(\R^n)$ the dual space of tempered distributions (anti-linear continuous functionals on $\mathscr{S}(\R^n)$).
Since the inclusion $\mathscr{D}(\R^n)\subset \mathscr{S}(\R^n)$ is continuous with dense image, we have $\scrS^*(\R^n)\hookrightarrow \scrD^*(\R^n)$.
For $u\in \mathscr{S}(\R^n)$ we define the Fourier transform $\hat{u}={\mathcal F} u\in \mathscr{S}(\R^n)$ and its inverse $\check{u}={\mathcal F}^{-1} u\in \mathscr{S}(\R^n)$ by
\begin{align*}%
\hat{u}(\bxi)&:= \frac{1}{(2\pi)^{n/2}}\int_{\R^n}\re^{-\ri \bxi\cdot \bx}u(\bx)\,\rd \bx , \;\; \bxi\in\R^n,
\\
\check{u}(\bx) &:= \frac{1}{(2\pi)^{n/2}}\int_{\R^n}\re^{\ri \bxi\cdot \bx}u(\bxi)\,\rd \bxi , \;\;\bx\in\R^n.
\end{align*}
We define the Bessel potential operator $\cJ_s$ on $\mathscr{S}(\R^n)$, for $s\in\R$, by $\cJ_s := {\mathcal F}^{-1}\cM_s{\mathcal F}$, where $\cM_s$ is multiplication by $(1+|\bxi|^2)^{s/2}$.
We extend these definitions to $\mathscr{S}^*(\R^n)$ in the usual way:
for $u\in \mathscr{S}^*(\R^n)$ and $v\in \mathscr{S}(\R^n)$ let
\begin{align}
\label{FTDistDef}
\hat{u}(v) := u(\check{v}),\quad
\check{u}(v) := u(\hat{v}),\quad \cM_su(v) := u(\cM_s v), \quad
(\cJ_s u)(v) := u(\cJ_s v),
\end{align}
Note that for $u\in \mathscr{S}^*(\R^n)$ it holds that $\widehat{\cJ_s u} = \cM_s\hat{u}$.

\subsubsection{Sobolev spaces on \texorpdfstring{$\R^n$}{Rn}}
\label{subsec:SobSpacesOnRn}
We define the Sobolev space $H^s(\R^n)\subset \mathscr{S}^*(\R^n)$ by
\begin{align*}
\mmbox{H^s(\R^n):=\cJ_{-s}\big(L^2(\R^n)\big) = \big\{u\in \mathscr{S}^*(\R^n): \cJ_s u \in \big(L^2(\R^n)\big)\big\},}
\end{align*}
equipped with the inner product
$\left(u,v\right)_{H^{s}(\R^n)}:=\left(\cJ_s u,\cJ_s v\right)_{L^2(\R^n)}$, which makes $H^s(\R^n)$ a Hilbert space and $\cJ_{-s}:L^2(\R^n)\to H^s(\R^n)$ a unitary isomorphism.
Furthermore, for any $s,t\in \R$, the map $\cJ_{t}:H^{s}(\R^n)\to H^{s-t}(\R^n)$ is a unitary isomorphism with inverse $\cJ_{-t}$.
If $u\in H^s(\R^n)$ then the Fourier transform $\hat u\in \scrS^*(\R^n)$ lies in $L^1_{\rm loc}(\R^n)$; that is, $\hat u$ can be identified with a locally integrable function.
Hence we can write %
\begin{align} \label{eq:HsProdNorm}%
\mmbox{ \begin{aligned}
\left(u,v\right)_{H^{s}(\R^n)} &= \int_{\R^n}(1+|\bxi|^2)^{s}\,\hat{u}(\bxi)\overline{\hat{v}(\bxi)}\,\rd \bxi, \\
\norm{u}{H^{s}(\R^n)}^2 &= \norm{\cJ_s u}{L^2(\R^n)}^2 = \int_{\R^n}(1+|\bxi|^2)^{s}|\hat{u}(\bxi)|^2\,\rd \bxi,
\end{aligned} } \qquad u,v\in H^s(\R^n).
\end{align}

For every $s\in \R$, $\scrD(\R^n)$ is a dense subset of $H^s(\R^n)$. Indeed \cite[Lemma 3.24]{McLean}, for all $u\in H^s(\R^n)$ and $\epsilon>0$ there exists $v\in \scrD(\R^n)$ such that
\begin{equation} \label{eq:approx}
\|u-v\|_{H^s(\R^n)} < \epsilon \quad \mbox{and} \quad \supp{v} \subset \{\bx\in \R^n:|\bx-\by|<\epsilon \mbox{ and } \by\in \supp{u}\},
\end{equation}
where $\supp{v}$ denotes the support of the distribution $v$, understood in the standard sense (e.g.~\cite[p.\ 66]{McLean}). A related standard result (this follows, e.g., from \cite[Exercise 3.14]{McLean}) is that,
for all $u\in H^s(\R^n)$ and $\epsilon>0$, there exists a compactly supported $v\in H^s(\R^n)$ such that
\begin{equation} \label{eq:approx2}
\|u-v\|_{H^s(\R^n)} < \epsilon \quad \mbox{and} \quad \supp{v} \subset \supp{u}.
\end{equation}

For any $-\infty<s<t<\infty$, $H^t(\R^n)$ is continuously embedded in $H^s(\R^n)$ with dense image
and $\|u\|_{H^s(\R^n)}<\|u\|_{H^t(\R^n)}$ for all $0\ne u\in H^t(\R^n)$.
When $s>n/2$, elements of $H^{s}(\R^n)$ can be identified with continuous functions (by the Sobolev embedding theorem \cite[Theorem 3.26]{McLean}). At the other extreme,
for any $\bx_0\in\R^n$ the Dirac delta function\footnote{To fit our convention that $H^s(\R^n)\subset \scrS^*(\R^n)$ is a space of {\em anti}-linear functionals on $\scrS(\R^n)$, we understand the action of $\delta_{\bx_0}$ by $\delta_{\bx_0}(\phi)=\overline{\phi(\bx_0)}$, $\phi\in \scrD(\R^n)$.}
\begin{equation}\label{eq:delta}
\delta_{\bx_0}\in H^{s}(\R^n)\qquad \text{if and only if}\qquad s<-n/2.
\end{equation}
Recall that for a multi-index $\boldsymbol\alpha\in\N_0^n$ we have ${\mathcal F}(\partial^{\boldsymbol\alpha}u/\partial \bx^{\boldsymbol\alpha})(\bxi)=(\ri\bxi)^{\boldsymbol\alpha}\hat u(\bxi)$. Then by Plancherel's theorem and \eqref{eq:HsProdNorm} it holds that
$$\|u\|^2_{H^{s+1}(\R^n)}=\|u\|^2_{H^s(\R^n)}+\sum_{j=1}^n\Big\|\frac{\partial u}{\partial x_j}\Big\|^2_{H^s(\R^n)} \qquad \forall u\in H^{s+1}(\R^n), \; s\in\R.$$
In particular, if $m\in\N_0$ then, where $|\boldsymbol\alpha|:=\sum_{j=1}^n\alpha_j$ for $\boldsymbol\alpha\in\N_0^n$,
\begin{align*}%
\|u\|^2_{H^{m}(\R^n)}&=\sum_{\substack{\boldsymbol\alpha\in\N_0^n,\\ |\boldsymbol\alpha|\le m}}
\binom{m}{|\boldsymbol\alpha|}\binom{|\boldsymbol\alpha|}{\boldsymbol\alpha}
\Big\|\frac{\partial^{|\boldsymbol\alpha|} u}{\partial \mathbf x^{\boldsymbol\alpha}}\Big\|_{L^2\Rn}^2
\\
&=\sum_{\substack{\boldsymbol\alpha\in\N_0^n,\\ |\boldsymbol\alpha|\le m}}
\frac{m!}{(m-|\boldsymbol\alpha|)!\alpha_1!\cdots\alpha_n!}
\Big\|\frac{\partial^{|\boldsymbol\alpha|} u}{\partial \mathbf x^{\boldsymbol\alpha}}\Big\|_{L^2\Rn}^2.
\end{align*}
Similar manipulations show that functions with disjoint support are orthogonal in $H^m\Rn$ for $m\in\N_0$. But we emphasize that this is not in general true in $H^s \Rn$ for $s\in \R\setminus\N_0$.

\subsubsection{The duality relation between \texorpdfstring{$H^s(\R^n)$ and $H^{-s}(\R^n)$}{Hs(Rn) and H-s(Rn)}} \label{subsec:dual1}
Where $R_s$ is the Riesz isomorphism $R_s:H^s(\R^n)\to (H^s(\R^n))^*$, the map  $\cI^s:=R_s \cJ_{-2s}$, from $H^{-s}(\R^n)$ to $(H^s(\R^n))^*$, is a unitary isomorphism, so $(H^{-s}(\R^n),\cI^s)$ is a unitary realisation of $(H^s(\R^n))^*$, with the duality pairing given by
\begin{align}\label{DualDef}
\left\langle u, v \right\rangle_{s}:=\cI^s u(v) =(\cJ_{-2s}u,v)_{H^s(\R^n)}=\left(\cJ_{-s} u,\cJ_s v\right)_{L^2(\R^n)} = \int_{\R^n}\hat{u}(\bxi) &\overline{\hat{v}(\bxi)}\,\rd \bxi,
\end{align}
for $u\in H^{-s}(\R^n)$ and $v\in H^s(\R^n)$.
This unitary realisation of $(H^s(\R^n))^*$ is attractive because the duality pairing \rf{DualDef} is simply the $L^2(\R^n)$ inner product when $u,v\in \scrS(\R^n)$, and a continuous extension of that inner product for $u\in H^{-s}(\R^n)$, $v\in H^{s}(\R^n)$. Moreover, if $u\in H^{-s}(\R^n)$ and $v\in \scrS(\R^n)\subset H^s(\R^n)$, then $\langle u, v\rangle_s$ coincides with the action of the tempered distribution $u$ on $v\in \scrS(\R^n)$, since (recalling \eqref{FTDistDef}) %
for $u\in H^{-s}(\R^n)$ and $v\in \scrS(\R^n)$
\begin{equation} \label{dualequiv}
\langle u, v\rangle_s  = (\cJ_{-s}u, \cJ_sv)_{L^2(\R^n)}= \cJ_{-s}u(\cJ_{s}v) = u(v). %
\end{equation}

\subsubsection{Sobolev spaces on closed and open subsets of \texorpdfstring{$\R^n$}{Rn}}
\label{subsec:SobSpacesClosedOpen}
Given $s\in \R$ and a closed set $F\subset \R^n$, we define %
\begin{equation} \label{HsSubFDef}
\mmbox{H_F^s :=\big\{u\in H^s(\R^n): \supp(u) \subset F\big\},}
\end{equation}
i.e.\ $H_F^s=\{u\in H^s\Rn: u(\varphi)=0\;
\forall \varphi\in\scrD(F^c)\}$.
Then $H_F^s$ is a closed subspace of $H^s(\R^n)$, so is a Hilbert space with respect to the inner product inherited from $H^s(\R^n)$.

There are many different ways to define Sobolev spaces on a non-empty open subset $\Omega\subset \R^n$.
We begin by considering three closed subspaces of $H^s(\R^n)$, which are all Hilbert spaces with respect to the inner product inherited from $H^s(\R^n)$. First, we have the space $H^s_{\overline{\Omega}}$, defined as in \rf{HsSubFDef}, i.e.
\begin{equation*} %
\mmbox{H_{\overline{\Omega}}^s := \big\{u\in H^s(\R^n): \supp(u) \subset \overline{\Omega}\big\}.}
\end{equation*}
Second, we consider %
\begin{equation*}
\mmbox{ \tH^s(\Omega):=\overline{\scrD(\Omega)}^{H^s(\R^n)}.}
\end{equation*}
Third, for $s\geq 0$ another natural space to consider is (see also Remark \ref{rem:ZeroExtension})%
\begin{align*}
\mmbox{ \begin{aligned}
\cirHs(\Omega) &:= \big\{u\in H^s(\R^n): u= 0 \mbox{ a.e. in } \Omega^c\big\}\\
&\,\,= \big\{u\in H^s(\R^n): \meas\big(\Omega^c\cap\supp{u}\big) = 0\big\}.
\end{aligned} }
\end{align*}
These three closed subspaces of $H^s(\R^n)$ satisfy the inclusions
\begin{align}
\label{eqn:inclusions}
\tH^s(\Omega)\subset \cirHs(\Omega)\subset H^s_{\overline\Omega}
\end{align}
(with $\cirHs(\Omega)$ present only for $s\geq0$).
If $\Omega$ is sufficiently smooth (e.g.\ $C^0$) then the three sets coincide, but in general all three can be different (this issue will be investigated in \S\ref{subsec:3spaces}).

Another way to define Sobolev spaces on $\Omega$ is by restriction from $H^s(\R^n)$. For $s\in\R$ let
$$
\mmbox{H^s(\Omega):=\big\{u\in \scrD^*(\Omega): u=U|_\Omega \textrm{ for some }U\in H^s(\R^n)\big\},}
$$
where $U|_\Omega$ denotes the restriction of the distribution $U$ to $\Omega$ in the standard sense \cite[p.~66]{McLean}.
We can identify $H^s(\Omega)$ with the quotient space $H^s(\R^n)/H^s_{\Omega^c}$ through the bijection
\begin{equation*} %
q_s:H^s(\R^n)/H^s_{\Omega^c}\to H^s(\Omega)
\quad\text{given by}\quad
q_s(U+H^s_{\Omega^c}) = U|_\Omega, \quad U\in H^s(\R^n).
\end{equation*}
Recalling the discussion of quotient spaces in and below \eqref{qsn}, this allows us to endow $H^s(\Omega)$ with a Hilbert space structure (making $q_s$ a unitary isomorphism), with the inner product given by
\begin{align*}
(u,v)_{H^s(\Omega)} := (q_s^{-1}u,q_s^{-1}v)_{H^s(\R^n)/H^s_{\Omega^c}}
& = (U+H^s_{\Omega^c},V+H^s_{\Omega^c})_{H^s(\R^n)/H^s_{\Omega^c}} \\ &= (Q_sU,Q_sV)_{H^s(\R^n)},
\end{align*}
for $u,v\in H^s(\R^n)$, where $U,V\in H^s(\R^n)$ are such that $U|_\Omega = u$, $V|_\Omega = v$, and
$Q_s$ is orthogonal projection from $H^s(\R^n)$ onto $(H^s_{{\Omega^c}})^\perp$, and the resulting norm given by%
\begin{align}
\mmbox{\|u\|_{H^{s}(\Omega)}=\|Q_sU\|_{H^s(\R^n)} = \min_{\substack{W\in H^s(\R^n)\\ W|_{\Omega}=u}}\normt{W}{H^{s}(\R^n)}.}
\label{eq:InfNorm}
\end{align}

We can also identify $H^s(\Omega)$ with $(H^s_{\Omega^c})^\perp$, by the unitary isomorphism $q_s {Q_s}_/^{-1}:(H^s_{\Omega^c})^\perp\to H^s(\Omega)$, where ${Q_s}_/:H^s(\R^n)/H^s_{\Omega^c} \to (H^s_{\Omega^c})^\perp$ is the quotient map defined from $Q_s$, as in \S\ref{sec:hs}.
In fact, it is easy to check that $q_s {Q_s}_/^{-1}$ is nothing but the restriction operator $|_\Omega$, so
\begin{equation}\label{eq:RestrIsUnitary}
|_\Omega :(H^s_{\Omega^c})^\perp\to H^s(\Omega) \qquad \text{is a unitary isomorphism}
\end{equation}
and the diagram in Figure \ref{fig:qsQs} commutes.
This means we can study the spaces $H^s(\Omega)$ (which, a priori, consist of distributions on $\Omega$) by studying subspaces of $H^s\Rn$; this is convenient, e.g., when trying to compare $H^s(\Omega_1)$ and $H^s(\Omega_2)$
for two different open sets $\Omega_1,\Omega_2$; see \S\ref{subsec:DiffDoms}.

\begin{figure}[tb!]
\begin{center}
\begin{tikzpicture}
\matrix[matrix of math nodes, column sep={80pt}] (s)
{&&|[name=quot]| H^s(\R^n)/H^s_{\Omega^c}\\
|[name=HR]| H^s(\R^n)&|[name=orth]| (H^s_{\Omega^c})^\perp&\\
&&|[name=HO]| H^s(\Omega) \\};
\draw[->,>=angle 60]
(HR) edge node[auto] {\(Q_s\)} (orth);
\draw[->,>=angle 60]
(quot) edge node[auto,swap,pos=0.3] {\(Q_{s/}\)} (orth)
(quot) edge node[auto] {\(q_s\)} (HO)
(orth) edge node[auto,swap,pos=0.7] {%
\(|_\Omega\)} (HO);
\end{tikzpicture}
\end{center}
\caption{The maps between $H^s(\R^n)$ and $H^s(\Omega)$, for $s\in\R$ and an open $\Omega\subset\R^n$, as described in \S\ref{subsec:SobSpacesClosedOpen}.
All the maps depicted are unitary isomorphisms except $Q_s$, which is an orthogonal projection, and this diagram commutes.\label{fig:qsQs}}
\end{figure}

Clearly%
\[\scrD(\overline\Omega):=\big\{u\in C^\infty(\Omega):u=U|_\Omega \textrm{ for some }U\in\scrD(\R^n)\big\}\]
is a dense subspace of $H^s(\Omega)$, since $\scrD(\R^n)$ is dense in $H^s(\R^n)$. The final space we introduce in this section is the closed subspace of $H^s(\Omega)$ defined by
\begin{equation}\label{eq:Hs0}
\mmbox{H^s_0(\Omega):=\overline{\scrD(\Omega)\big|_\Omega}^{H^s(\Omega)}.}
\end{equation}
$\tH^s(\Omega)$ and $H^s_0(\Omega)$ are defined  as closures in certain norms of $\scrD(\Omega)$ and $\scrD(\Omega)|_\Omega$, respectively,
so that the former is a subspace of $H^s(\R^n)\subset \scrS^*(\R^n)$ and the latter of  $H^s(\Omega)\subset\scrS^*(\R^n)|_\Omega \subset\scrD^*(\Omega)$.
For $s>1/2$ and sufficiently uniformly smooth $\Omega$, both $\tH^s\OO$ and $H^s_0\OO$ consist of functions with ``zero trace'' (see \cite[Theorem~3.40]{McLean}  for the case when $\partial \Omega$ is bounded), but this intuition fails for negative $s$: if $\bx_0\in\partial\Omega$, then the delta function $\delta_{\bx_0}$ lies in $\tH^s\OO$ for $s<-n/2$, irrespective of the regularity of $\partial\Omega$; see the proof of Corollary~\ref{cor:Hs0HsEqual}(iv) below.

\begin{rem}
\label{rem:ZeroExtension}
We note that for $s\geq 0$ the restriction of $\cirHs(\Omega)$ to $\Omega$ is precisely the subspace (not necessarily closed)
\begin{align*}%
\Hze^s(\Omega):= \big\{u\in H^s(\Omega): \uze\in H^s(\R^n)\big\}\subset H^s(\Omega),
\end{align*}
where $\uze$ is the extension of $u$ from $\Omega$ to $\R^n$ by zero.
The restriction operator $|_\Omega:\cirHs(\Omega) \to \Hze^s(\Omega)$ is clearly a bijection for all $s\geq 0$, with inverse given by the map $u\mapsto \uze$, and if $\Hze^s(\Omega)$ is equipped with the norm $\|u\|_{\Hze^s(\Omega)}:=\|\uze\|_{H^s(\R^n)}$ (as in e.g.\ \cite[Equation~(1.3.2.7)]{Gri}, where $\Hze^s(\Omega)$ is denoted $\tilde W_2^s(\Omega)$) then $|_\Omega:\cirHs(\Omega) \to \Hze^s(\Omega)$ is trivially a unitary isomorphism for all $s\geq 0$.
\end{rem}

For clarity, we repeat a fundamental fact: the natural norm on $H^s_F$, $\tH^s(\Omega)$, $\cirHs\OO$ and $H^s_{\overline{\Omega}}$ is the $H^s\Rn$-norm (defined in \eqref{eq:HsProdNorm}), while the norm on $H^s\OO$ and $H^s_0\OO$ is the minimal $H^s\Rn$-norm among the extensions of $u\in H^s\OO$ to $\R^n$ (defined in \eqref{eq:InfNorm}).
\subsection{Dual spaces}
\label{subsec:DualAnnih}

In this section we construct concrete unitary realisations (as Sobolev spaces) of the duals of the Sobolev spaces defined in \S\ref{subsec:SobolevDef}.
Our constructions are based on the abstract Hilbert space result of Lemma \ref{lem:hs_orth}, and are valid for any non-empty open set $\Omega\subset\R^n$, irrespective of its regularity.

We first note the following lemma, which characterises the annihilators (as defined in \rf{eq:AnnihilatorDef}) of the subsets $\tH^s(\Omega)$ and $H^s_{\Omega^c}$ of $H^s(\R^n)$, with
$(H^s(\R^n))^*$ realised as $H^{-s}(\R^n)$ through the unitary isomorphism $\cI^s=R_s\cJ_{-2s}$ (see \S\ref{subsec:dual1}) with associated duality pairing \rf{DualDef}.

\begin{lem} \label{lem:orth_lem} Let $\Omega$ be any non-empty open subset of $\R^n$, and $s\in\R$. %
Then
\begin{align}
\label{eqn:anni}
H^{-s}_{\Omega^c} = \left(\tH^{s}(\Omega)\right)^{a,H^{-s}(\R^n)}
\qquad
\textrm{ and }
\qquad
\tH^{-s}(\Omega) = \left(H^s_{\Omega^c}\right)^{a,H^{-s}(\R^n)}.
\end{align}
Furthermore, the Bessel potential operator is a unitary isomorphism between the following pairs of subspaces:
\[\cJ_{2s}:\tH^s(\Omega)\to(H^{-s}_{\Omega^c})^{\perp} \qquad \textrm{and} \qquad
\cJ_{2s}: H^s_{\Omega^c} \to (\tH^{-s}\OO)^{\perp}.\]
\end{lem}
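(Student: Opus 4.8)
The plan is to deduce everything from the abstract machinery in \S\ref{sec:hs}, applied with $H = H^s(\R^n)$ and $\cH = H^{-s}(\R^n)$, where $\cI = \cI^s = R_s\cJ_{-2s}$ is the unitary realisation of $(H^s(\R^n))^*$ described in \S\ref{subsec:dual1}, with duality pairing $\langle\cdot,\cdot\rangle_s$ given by \eqref{DualDef}. The two identities in \eqref{eqn:anni} are really the same statement applied at regularities $s$ and $-s$, so it suffices to establish one of them and then swap the roles of $s$ and $-s$. For the first identity I would argue directly from the definition \eqref{eq:AnnihilatorDef} of the annihilator: $u \in (\tH^s(\Omega))^{a,H^{-s}(\R^n)}$ means $u \in H^{-s}(\R^n)$ and $\langle u, v\rangle_s = 0$ for all $v \in \tH^s(\Omega)$. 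Since $\tH^s(\Omega)$ is by definition the closure of $\scrD(\Omega)$ in $H^s(\R^n)$ and $\langle u,\cdot\rangle_s$ is continuous, this is equivalent to $\langle u, \varphi\rangle_s = 0$ for all $\varphi \in \scrD(\Omega)$. By \eqref{dualequiv}, $\langle u,\varphi\rangle_s = u(\varphi)$, the action of the tempered distribution $u$ on $\varphi$; hence the condition says precisely that $u$ vanishes on $\scrD(\Omega) = \scrD((\Omega^c)^c)$, i.e.\ $\supp u \subset \Omega^c$, i.e.\ $u \in H^{-s}_{\Omega^c}$ by the characterisation recorded just after \eqref{HsSubFDef}. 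The second identity in \eqref{eqn:anni} follows by the same computation with $s$ replaced by $-s$ (noting $\langle u,v\rangle_{-s} = \overline{\langle v,u\rangle_s}$, which does not affect vanishing), giving $(H^s_{\Omega^c})^{a,H^{-s}(\R^n)} = \tH^{-s}(\Omega)$.

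For the Bessel potential claims I would invoke \eqref{eq:AnnihilatorResult} together with the general identity $V^{a,\cH} = j(V^\perp)$ from \S\ref{sec:DualSpaceRealisations}, where here $j = \cI^{-1}R = (R_s\cJ_{-2s})^{-1}R_s = \cJ_{2s}$ (using that $\cJ_{-2s}^{-1} = \cJ_{2s}$ and that $R = R_s$ is the Riesz map on $H^s(\R^n)$). Thus $j = \cJ_{2s}: H^s(\R^n) \to H^{-s}(\R^n)$, which is exactly the unitary isomorphism $\cJ_{2s}$ between these Sobolev spaces recorded in \S\ref{subsec:SobSpacesOnRn}. Applying $V^{a,\cH} = j(V^\perp)$ with $V = \tH^s(\Omega)^\perp$ (a closed subspace of $H = H^s(\R^n)$) and using $V^{\perp\perp} = V$ gives, via the first half of \eqref{eq:AnnihilatorResult}, that $j(\tH^s(\Omega)) = \big((\tH^s(\Omega))^{a,H^{-s}(\R^n)}\big)^\perp = (H^{-s}_{\Omega^c})^\perp$, where the last step uses the first identity in \eqref{eqn:anni} just proved. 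That is exactly $\cJ_{2s}:\tH^s(\Omega)\to (H^{-s}_{\Omega^c})^\perp$. Likewise, applying $j(V) = (V^{a,\cH})^\perp$ with $V = H^s_{\Omega^c}$ and the second identity in \eqref{eqn:anni} yields $\cJ_{2s}: H^s_{\Omega^c}\to (\tH^{-s}(\Omega))^\perp$. In both cases $\cJ_{2s}$ is a restriction of a unitary isomorphism to a closed subspace mapped onto a closed subspace, hence unitary.

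The only genuinely delicate point, and the one I would be most careful about, is the identification $\langle u,\varphi\rangle_s = u(\varphi)$ for $u \in H^{-s}(\R^n)$ and $\varphi \in \scrD(\Omega) \subset \scrS(\R^n)$, and the resulting equivalence ``$\langle u,\varphi\rangle_s = 0$ for all $\varphi\in\scrD(\Omega)$'' $\iff$ ``$\supp u \subset \Omega^c$''. The forward direction of this latter equivalence is the definition of support of a distribution (no non-trivial test function supported in $\Omega$ on which $u$ acts nontrivially); the reverse is immediate. The identity $\langle u,\varphi\rangle_s = u(\varphi)$ is precisely \eqref{dualequiv}, so this is already available; one just needs to note that $\scrD(\Omega)$ being dense in $\tH^s(\Omega)$ (by definition) and $\langle u,\cdot\rangle_s$ being bounded on $H^s(\R^n)$ lets us pass from testing against $\scrD(\Omega)$ to testing against all of $\tH^s(\Omega)$. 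Everything else is formal bookkeeping with the commutative diagram of Figure \ref{fig:Hilbert}, so no essential obstacle remains beyond keeping the roles of $s$ and $-s$, and of $H$ and $\cH$, straight.
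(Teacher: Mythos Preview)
Your proof is correct and follows essentially the same route as the paper's: the first identity via \eqref{dualequiv}, density of $\scrD(\Omega)$ in $\tH^s(\Omega)$, and the definition of support; then the abstract machinery of \S\ref{sec:hs} with $j=\cJ_{2s}$ for the rest. One small imprecision worth fixing: swapping $s\leftrightarrow -s$ in the first identity gives $H^{s}_{\Omega^c} = (\tH^{-s}(\Omega))^{a,H^{s}(\R^n)}$, not the second identity itself --- you still need one application of the double-annihilator formula $(V^{a,\cH})^{a,H}=V$ from \eqref{eq:AnnihilatorResult} (which the paper cites explicitly at this step) to pass from the former to $\tH^{-s}(\Omega) = (H^s_{\Omega^c})^{a,H^{-s}(\R^n)}$.
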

\begin{proof}
From the definition of the support of a distribution, \eqref{dualequiv}, the definition of $\tH^{s}(\Omega)$, and the continuity of the sesquilinear form $\langle\cdot,\cdot\rangle_s$, it follows that, for $s\in \R$,
\begin{align*}
H^{-s}_{\Omega^c} &= \{u\in H^{-s}(\R^n): \supp(u)\subset {\Omega^c}\} \\ &= \{u\in H^{-s}(\R^n): u(v) = 0 \mbox{ for all } v\in \scrD(\Omega)\}\\
& =  \{u\in H^{-s}(\R^n): \langle u,v\rangle_s = 0 \mbox{ for all } v\in \scrD(\Omega)\}
= \left(\tH^{s}(\Omega)\right)^{a,H^{-s}(\R^n)},
\end{align*}
which proves the first statement in \rf{eqn:anni}. The second statement in \rf{eqn:anni} follows immediately from the first, after replacing $s$ by $-s$, by \rf{eq:AnnihilatorResult}. The final statement of the lemma also follows by \eqref{eq:AnnihilatorResult}, noting that $j$ in \eqref{eq:AnnihilatorResult} is given explicitly as $j=(\cI^s)^{-1}R_s = \cJ_{2s}$.
\end{proof}

Combining Lemma~\ref{lem:orth_lem} with Lemmas \ref{dual_lem} and \ref{lem:hs_orth} gives unitary realisations for $(\tH^s(\Omega))^*$ and $(H^{-s}(\Omega))^*$, expressed in Theorem~\ref{thm:DualityTheorem} below.
These unitary realisations, precisely the result that the operators $\cI_s$ and $\cI_s^*$ in \rf{def_embed1App} are unitary isomorphisms, are well known when $\Omega$ is sufficiently regular.
For example,
in \cite[Theorem~3.30]{McLean} and in \cite[Theorem~2.15]{Steinbach} the result
is claimed for $\Omega$ Lipschitz with bounded boundary.
(In fact, \cite[Theorems~3.14 and 3.29(ii)]{McLean} together imply the result when $\Omega$ is $C^0$ with bounded boundary, but this is not highlighted in \cite{McLean}.)
However, it is not widely  appreciated, at least in the numerical PDEs community, that this result holds without any constraint on the geometry of $\Omega$.

\begin{thm}\label{thm:DualityTheorem}
Let $\Omega$ be any non-empty open subset of $\R^n$, and $s\in\R$. Then %
\begin{align}
\label{isdual}
H^{-s}(\Omega)\cong_{\cI_s} \big(\tH^s(\Omega)\big)^* \; \mbox{ and }\;
\tH^{s}(\Omega)\cong_{\cI_s^*}\big(H^{-s}(\Omega)\big)^*,
\end{align}
where $\cI_s:H^{-s}(\Omega)\to(\tH^s(\Omega))^*$ and $\cI_s^*:\tH^{s}(\Omega)\to(H^{-s}(\Omega))^*$, defined by
\begin{align}
\label{def_embed1App}
\cI_s u (v)= \langle U,v \rangle_{s}
\;\; \mbox{ and } \;\;
\cI_s^*v(u) = \langle v,U \rangle_{-s}, %
\quad \mbox{ for } u\in H^{-s}(\Omega), \,v\in\tH^s(\Omega),
\end{align}
where $U\in H^{-s}(\R^n)$ denotes \textit{any} extension of $u$ with $U|_\Omega=u$, %
are unitary isomorphisms.
Furthermore, the associated duality pairings %
\begin{equation*} %
\langle u,v \rangle_{H^{-s}(\Omega)\times \tH^{s}(\Omega)} := \cI_s u(v) \qquad \mbox{ and } \qquad \langle v,u \rangle_{\tH^{s}(\Omega)\times {H}^{-s}(\Omega)}:= \cI_s^*v(u),
\end{equation*}
satisfy
$$
\langle v,u \rangle_{\tH^{s}(\Omega)\times H^{-s}(\Omega)} = \overline{\langle u,v \rangle}_{H^{-s}(\Omega)\times \tH^s(\Omega)}, \quad v\in \tH^{s}(\Omega), \; u\in H^{-s}(\Omega).
$$
\end{thm}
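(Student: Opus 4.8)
The plan is to deduce Theorem~\ref{thm:DualityTheorem} directly from the abstract machinery of \S\ref{sec:hs}, with Lemma~\ref{lem:orth_lem} providing the necessary identification of annihilators. First I would apply Lemma~\ref{lem:hs_orth} with $H=H^s(\R^n)$, $\cH=H^{-s}(\R^n)$, $\cI=\cI^s=R_s\cJ_{-2s}$ (so that $(\cH,\cI)$ is the unitary realisation of $(H^s(\R^n))^*$ from \S\ref{subsec:dual1}, with duality pairing $\langle\cdot,\cdot\rangle_s$), and $V=\tH^s(\Omega)$. By Lemma~\ref{lem:orth_lem} the annihilator $V^{a,\cH}$ equals $H^{-s}_{\Omega^c}$, so $\cV:=(V^{a,\cH})^\perp = (H^{-s}_{\Omega^c})^\perp$, and Lemma~\ref{lem:hs_orth} yields that $\cI_\cV$, the restriction of $\cI^s$ to $(H^{-s}_{\Omega^c})^\perp$, is a unitary isomorphism onto $(\tH^s(\Omega))^*$. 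Composing with the unitary isomorphism $|_\Omega:(H^{-s}_{\Omega^c})^\perp\to H^{-s}(\Omega)$ of \eqref{eq:RestrIsUnitary} (with $s$ replaced by $-s$), and tracking that the duality pairing in Lemma~\ref{lem:hs_orth} is just the restriction of $\langle\cdot,\cdot\rangle_s$, gives the first isomorphism in \eqref{isdual} together with the formula \eqref{def_embed1App} for $\cI_s$; the point that $\langle U,v\rangle_s$ is independent of the choice of extension $U$ is exactly the statement that $v\in\tH^s(\Omega)$ annihilates $H^{-s}_{\Omega^c}$, which is again Lemma~\ref{lem:orth_lem}.

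Next I would obtain the second isomorphism in \eqref{isdual} by applying Lemma~\ref{dual_lem} to the unitary realisation $(\cH',\cI')=(H^{-s}(\Omega),\cI_s)$ of $(\tH^s(\Omega))^*$ just constructed. Lemma~\ref{dual_lem} furnishes a natural unitary isomorphism $(\cI')^*=\cIstar$ from $\tH^s(\Omega)$ onto $(H^{-s}(\Omega))^*$, defined by $(\cI')^*v(u)=\overline{\cI'u(v)}$; unwinding the definition of $\cI_s$ and using that $\overline{\langle U,v\rangle_s}=\langle v,U\rangle_{-s}$ (which is immediate from the symmetry of \eqref{DualDef} under complex conjugation and swapping of arguments) identifies this with the map $\cI_s^*$ of \eqref{def_embed1App}. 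This simultaneously delivers the final displayed identity of the theorem: Lemma~\ref{dual_lem} states precisely that the two associated duality pairings are complex conjugates of one another, i.e.\ $\langle v,u\rangle_{\tH^s(\Omega)\times H^{-s}(\Omega)}=\overline{\langle u,v\rangle}_{H^{-s}(\Omega)\times\tH^s(\Omega)}$.

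For the \textbf{final statement} specifically — the conjugate-symmetry of the two duality pairings — the cleanest route is therefore not to recompute anything but simply to invoke Lemma~\ref{dual_lem} in the situation of the previous paragraph. One writes $\langle v,u\rangle_{\tH^s(\Omega)\times H^{-s}(\Omega)} = \cI_s^*v(u)$ by definition, applies the formula $\cIstar\phi(\psi)=\overline{\cI\psi(\phi)}$ from Lemma~\ref{dual_lem} (with $\phi=v$, $\psi=u$, $\cI=\cI_s$), giving $\cI_s^*v(u)=\overline{\cI_s u(v)}$, and then recognises the right-hand side as $\overline{\langle u,v\rangle}_{H^{-s}(\Omega)\times\tH^s(\Omega)}$ by definition of the latter pairing. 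The identity holds for all $v\in\tH^s(\Omega)$ and $u\in H^{-s}(\Omega)$, as required.

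I expect the only genuine subtlety — the main obstacle, insofar as there is one — to be the well-definedness and extension-independence of the pairing $\langle U,v\rangle_s$ appearing in \eqref{def_embed1App}: one must be careful to check that for $v\in\tH^s(\Omega)$ the value $\langle U,v\rangle_s$ does not depend on which extension $U\in H^{-s}(\R^n)$ of $u\in H^{-s}(\Omega)$ is chosen, so that $\cI_s$ is a well-defined map on $H^{-s}(\Omega)$ rather than on $(H^{-s}_{\Omega^c})^\perp$. This is handled by the identification $H^{-s}_{\Omega^c}=(\tH^s(\Omega))^{a,H^{-s}(\R^n)}$ from Lemma~\ref{lem:orth_lem}: two extensions $U,U'$ differ by an element of $H^{-s}_{\Omega^c}$, which by definition of the annihilator pairs to zero against every $v\in\tH^s(\Omega)$. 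Everything else is the formal composition of unitary isomorphisms already assembled in \S\ref{sec:hs}, so no essential difficulty remains once that bookkeeping is in place.
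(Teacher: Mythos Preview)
Your proposal is correct and follows essentially the same route as the paper: apply Lemma~\ref{lem:hs_orth} with $H=H^s(\R^n)$, $\cH=H^{-s}(\R^n)$, $V=\tH^s(\Omega)$, use Lemma~\ref{lem:orth_lem} to identify $\cV=(H^{-s}_{\Omega^c})^\perp$, compose with the unitary restriction map \eqref{eq:RestrIsUnitary}, and then invoke Lemma~\ref{dual_lem} for the second isomorphism and the conjugate-symmetry of the pairings. Your explicit remark on extension-independence (via $H^{-s}_{\Omega^c}=(\tH^s(\Omega))^{a,H^{-s}(\R^n)}$) spells out what the paper summarises as ``it is easy to check'' that $\cI_s$ is given by \eqref{def_embed1App}.
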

\begin{proof}
By Lemma~\ref{lem:orth_lem}, it follows from Lemma \ref{lem:hs_orth}, applied with $H=H^s(\R^n)$, $\cH = H^{-s}(\R^n)$ and $V=\tH^s(\Omega)$, that
$\cIhat_s:(H^{-s}_{\Omega^c})^\perp \to (\tH^s(\Omega))^*$, defined by $\cIhat_s u(v) = \langle u,v\rangle_s$, is a unitary isomorphism.
By Lemma \ref{dual_lem}, $\cIhat_s^*:\tH^s(\Omega) \to ((H^{-s}_{\Omega^c})^\perp)^*$, defined by $\cIhat_s^* v(u) = \langle v,u\rangle_{-s} =\overline{\cIhat_s u(v)}$ is also a unitary isomorphism.
Thus the dual space of $\tH^s(\Omega)$ can be realised in a canonical way %
by $(H^{-s}_{\Omega^c})^\perp$, and vice versa.
But we can say more. Since (cf.\ \eqref{eq:RestrIsUnitary}) the restriction operator $|_\Omega$ is a unitary isomorphism from $(H^{-s}_{\Omega^c})^\perp$ onto $H^{-s}(\Omega)$, the composition $\cI_s:=\cIhat_s(|_\Omega)^{-1}:H^{-s}(\Omega) \to (\tH^s(\Omega))^*$ is a unitary isomorphism.
And, again by Lemma \ref{dual_lem}, $\cI_s^*:\tH^s(\Omega) \to (H^{-s}(\Omega))^*$, defined by $\cI_s^* v(u) := \overline{\cI_s u(v)}$ is also a unitary isomorphism. Hence we can realise the dual space of $\tH^s(\Omega)$ by $H^{-s}(\Omega)$, and vice versa.
Moreover, it is easy to check that $\cI_s$ and $\cI_s^*$ can be evaluated as in \rf{def_embed1App}.
Thus $\cI_s$ and $\cI_s^*$ coincide with the natural embeddings of $H^{-s}(\Omega)$ and $\tH^{s}(\Omega)$ into $(\tH^s(\Omega))^*$ and $(H^{-s}(\Omega))^*$, respectively (as in e.g.\ \cite[Theorem 3.14]{McLean}).
\end{proof}

\begin{cor}\label{cor:DualityTheorem2}
Let $F$ be any closed subset of $\R^n$ (excepting $\R^n$ itself), and $s\in\R$.
Then
\begin{align*}%
\big(\tH^{-s}(F^c)\big)^\perp\cong_{\cItilde_s}(H^s_{F})^* \; \mbox{ and }\; H^s_{F} \cong_{\cItilde_s^*}\Big( \big(\tH^{-s}(F^c)\big)^\perp\Big)^*,
\end{align*}
where $\cItilde_s: (\tH^{-s}(F^c))^\perp \to (H^s_{F})^*$ and $\cItilde_s^*:H^s_{F} \to ( (\tH^{-s}(F^c))^\perp)^*$, defined by
\begin{align*}%
\cItilde_s u(v):=\langle u,v\rangle_s, \; \mbox{ and }\; \cItilde_s^* v(u) = \langle v,u\rangle_{-s} =\overline{\cItilde_s u(v)},
\end{align*}
for $u\in \big(\tH^{-s}(F^c)\big)^\perp$ and $v\in H^s_{F}$,
are unitary isomorphisms.
\end{cor}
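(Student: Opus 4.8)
The plan is to deduce Corollary~\ref{cor:DualityTheorem2} directly from Lemma~\ref{lem:orth_lem} and the abstract machinery of \S\ref{sec:DualSpaceRealisations}, in exact parallel with the proof of Theorem~\ref{thm:DualityTheorem}. The key observation is that $H^s_F$ is already a closed subspace of $H^s(\R^n)$ (no quotienting or restriction is needed), so one does not even need the restriction-operator step \eqref{eq:RestrIsUnitary}; this makes the argument shorter than that for Theorem~\ref{thm:DualityTheorem}. Write $F^c = \Omega$, an open set; then by the second identity in \eqref{eqn:anni} of Lemma~\ref{lem:orth_lem} (applied with $s$ replaced by $-s$, noting $\Omega^c = F$), we have $\tH^{-s}(F^c) = (H^s_{F})^{a,H^{-s}(\R^n)}$, equivalently $H^s_F = (\tH^{-s}(F^c))^{a,H^s(\R^n)}$ by the symmetry relation \eqref{eq:AnnihilatorResult}.

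The core step is then to apply Lemma~\ref{lem:hs_orth} with $H = H^s(\R^n)$, $\cH = H^{-s}(\R^n)$, $\cI = \cI^s$ (so that the duality pairing is $\langle\cdot,\cdot\rangle_s$ of \eqref{DualDef}), and $V = H^s_F$. By the previous paragraph, the annihilator $V^{a,\cH}$ equals $\tH^{-s}(F^c)$, hence $\cV := (V^{a,\cH})^\perp = (\tH^{-s}(F^c))^\perp \subset H^{-s}(\R^n)$. Lemma~\ref{lem:hs_orth} then gives that $\cItilde_s := \cI^s_{\cV}$, i.e. the map $u \mapsto \langle u, \cdot\rangle_s$ from $(\tH^{-s}(F^c))^\perp$ to $(H^s_F)^*$, is a unitary isomorphism, with duality pairing $\langle u,v\rangle_s$ for $u \in (\tH^{-s}(F^c))^\perp$, $v \in H^s_F$, which is precisely the claimed formula. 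Then applying Lemma~\ref{dual_lem} to this realisation yields that $\cItilde_s^* : H^s_F \to ((\tH^{-s}(F^c))^\perp)^*$, defined by $\cItilde_s^* v(u) = \overline{\cItilde_s u(v)} = \overline{\langle u,v\rangle_s}$, is a unitary isomorphism; and by \eqref{dualequiv} (or directly from the definition of $\langle\cdot,\cdot\rangle_s$) one has $\overline{\langle u,v\rangle_s} = \langle v,u\rangle_{-s}$, giving the stated alternative expression for $\cItilde_s^*$.

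The only minor points to verify are bookkeeping: that the hypothesis "$F \neq \R^n$" (equivalently $\Omega = F^c$ non-empty) is exactly what is needed so that $\Omega$ is a legitimate non-empty open set in Lemma~\ref{lem:orth_lem}; and that the identification of $\cI^s_\cV$ with $u \mapsto \langle u,\cdot\rangle_s$ is immediate from the definition $\cI_\cV \psi(\phi) = \cI\psi(\phi) = \langle\psi,\phi\rangle$ in Lemma~\ref{lem:hs_orth} together with \eqref{dp}. I do not anticipate a genuine obstacle here; if anything, the "hard part" is purely notational — keeping straight which of $s$ and $-s$ decorates which space, and confirming that no surjectivity or density argument beyond what Lemma~\ref{lem:hs_orth} already supplies is required, since (unlike Theorem~\ref{thm:DualityTheorem}) there is no quotient space $H^{-s}(\R^n)/(\cdot)$ to untangle. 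One could, if desired, also record that $\cV = (\tH^{-s}(F^c))^\perp$ can be written via the Bessel potential as $\cJ_{2s}(H^s_F)$ using the final statement of Lemma~\ref{lem:orth_lem}, but this is not needed for the corollary as stated.
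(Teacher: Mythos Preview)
Your proof is correct and takes essentially the same approach as the paper: the paper's one-line proof invokes the setup of Theorem~\ref{thm:DualityTheorem} together with Remark~\ref{rem:orth} (which is just Lemma~\ref{lem:hs_orth} applied to $V^\perp$), while you apply Lemma~\ref{lem:hs_orth} directly with $V=H^s_F$ --- these are equivalent via the annihilator identities \eqref{eq:AnnihilatorResult}. One trivial slip: the second identity in \eqref{eqn:anni} already reads $\tH^{-s}(\Omega)=(H^s_{\Omega^c})^{a,H^{-s}(\R^n)}$, so with $\Omega=F^c$ no substitution ``$s\mapsto -s$'' is needed.
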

\begin{proof}
Setting $\Omega:=F^c$, the result follows from Theorem \ref{thm:DualityTheorem} and its proof and Remark \ref{rem:orth}.
\end{proof}

\begin{rem}
It is also possible to realise $(\tH^s(\Omega))^*$ and $(H^s_{F})^*$ using quotient spaces, by composition of $\cIhat_s$ and $\cItilde_s$ with the appropriate quotient maps.
For example, $(\tH^s(\Omega))^*$ can be realised as $(H^{-s}(\R^n)/H^{-s}_{\Omega^c},\cIcheck_s)$, where $\cIcheck_s=\cIhat_s {Q_{-s}}_/ = \cI_s q_{-s}$, and $q_s$ and ${Q_s}_/$ are defined as in \S\ref{subsec:SobSpacesClosedOpen}.
\end{rem}

\begin{rem}
\label{rem:DualOfCircle}
Corollary \ref{cor:DualityTheorem2}, coupled with Remark \ref{rem:orth} or with the results in the proof of Theorem \ref{thm:DualityTheorem},
implies that, for a non-empty open set $\Omega$, $(\tH^s(\Omega))^*$ and $(H^s_{\overline{\Omega}})^*$ can be canonically realised as subspaces of $H^{-s}(\R^n)$, namely as $(H^{-s}_{\Omega^c})^\perp$ and $(\tH^{-s}(\overline{\Omega}^c))^\perp$ respectively. For $s\geq 0$, we know that $(\cirHs\OO)^*$ can similarly be realised as the subspace $(X^{-s}(\Omega))^\perp\subset H^{-s}(\R^n)$, where $\tH^{-s}(\overline{\Omega}^c)\subset X^{-s}(\Omega):=(\cirHs\OO)^{a,H^{-s}(\R^n)}\subset H^{-s}_{\Omega^c}$. But, as far as we know, providing an explicit description of the space $X^{-s}(\Omega)\subset H^{-s}(\R^n)$ is an  open problem.
\end{rem}

The following lemma realises the dual space of $H^s_0(\Omega)\subset H^s(\Omega)$ as a subspace of $\tH^{-s}(\Omega)$.
\begin{lem}
\label{lem:Hs0Dual}
Let $\Omega$ be any non-empty open subset of $\R^n$ and $s\in\R$.
Then the dual space of $H^s_0(\Omega)$ can be unitarily realised as
$(\tH^{-s}(\Omega) \cap H^{-s}_{\partial\Omega})^{\perp, \tH^{-s}(\Omega)}$,
with the duality pairing inherited from ${\tH}^{-s}(\Omega) \times H^s(\Omega)$.
\end{lem}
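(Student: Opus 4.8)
The plan is to realise $H^s_0(\Omega)$ as a closed subspace of a Hilbert space whose dual we already understand, and then apply Lemma~\ref{lem:hs_orth}. The natural candidate is $H^s(\Omega)$ itself, whose dual is realised unitarily as $\tH^{-s}(\Omega)$ by Theorem~\ref{thm:DualityTheorem} (with the pairing $\langle v,u\rangle_{\tH^{-s}(\Omega)\times H^s(\Omega)}$). Since $H^s_0(\Omega)$ is by definition \eqref{eq:Hs0} a closed subspace of $H^s(\Omega)$, Lemma~\ref{lem:hs_orth} applied with $H=H^s(\Omega)$, $\cH=\tH^{-s}(\Omega)$, $\cI=\cI_{-s}^*$ (so that $\cI^*_{-s}$ is the unitary realisation from Theorem~\ref{thm:DualityTheorem}, swapping the roles of $s$ and $-s$) and $V=H^s_0(\Omega)$ gives immediately that $(H^s_0(\Omega))^*$ is unitarily realised by $\big((H^s_0(\Omega))^{a,\tH^{-s}(\Omega)}\big)^{\perp,\tH^{-s}(\Omega)}$, with duality pairing the restriction of that on $\tH^{-s}(\Omega)\times H^s(\Omega)$. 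So the entire content of the lemma reduces to the identification
$$
\big(H^s_0(\Omega)\big)^{a,\tH^{-s}(\Omega)} = \tH^{-s}(\Omega)\cap H^{-s}_{\partial\Omega}.
$$

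To prove this identity I would unwind the definition of the annihilator \eqref{eq:AnnihilatorDef}: $w\in\tH^{-s}(\Omega)$ lies in the annihilator of $H^s_0(\Omega)$ iff $\langle w,u\rangle=0$ for all $u\in H^s_0(\Omega)$. Because $\scrD(\Omega)|_\Omega$ is dense in $H^s_0(\Omega)$ and the pairing is continuous, this is equivalent to $\langle w,\varphi|_\Omega\rangle=0$ for all $\varphi\in\scrD(\Omega)$. Now by the way the pairing on $\tH^{-s}(\Omega)\times H^s(\Omega)$ is built from $\langle\cdot,\cdot\rangle_{-s}$ on $H^{-s}(\R^n)\times H^s(\R^n)$ via \eqref{def_embed1App} (with $W\in H^{-s}(\R^n)$ an arbitrary extension of $w$, noting $w\in\tH^{-s}(\Omega)\subset H^{-s}(\R^n)$ is already globally defined so we may take $W=w$), and by \eqref{dualequiv}, for $\varphi\in\scrD(\Omega)$ we have $\langle w,\varphi|_\Omega\rangle = \langle w,\varphi\rangle_{-s} = w(\varphi)$. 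Hence $w$ annihilates $H^s_0(\Omega)$ iff $w(\varphi)=0$ for all $\varphi\in\scrD(\Omega)$, i.e.\ iff $\supp w\subset\Omega^c$. Combined with $w\in\tH^{-s}(\Omega)$, this says $w\in\tH^{-s}(\Omega)$ and $\supp w\subset\Omega^c$; since also $\supp w\subset\overline\Omega$ for any $w\in\tH^{-s}(\Omega)$ (because $\tH^{-s}(\Omega)=\overline{\scrD(\Omega)}^{H^{-s}(\R^n)}$ and supports are stable under the $H^{-s}$-limit, cf.\ \eqref{eq:approx}), we get $\supp w\subset\overline\Omega\cap\Omega^c=\partial\Omega$, i.e.\ $w\in H^{-s}_{\partial\Omega}$. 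The reverse inclusion is the same computation read backwards. This establishes the displayed identity and hence the lemma.

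The one subtlety — and the place I'd be most careful — is making sure the pairing used in the annihilator is exactly the one claimed in the statement, namely the pairing inherited from $\tH^{-s}(\Omega)\times H^s(\Omega)$, and that the evaluation $\langle w,\varphi|_\Omega\rangle = w(\varphi)$ for $\varphi\in\scrD(\Omega)$ and $w\in\tH^{-s}(\Omega)$ really does hold with this normalisation. This is a matter of tracking through \eqref{def_embed1App}, \eqref{dualequiv}, and the definition of $H^s_0(\Omega)$ as the closure of $\scrD(\Omega)|_\Omega$ rather than of $\scrD(\Omega)$ itself; the restriction map $|_\Omega$ intertwines the relevant pairings precisely because of \eqref{eq:RestrIsUnitary} and the construction of the $H^s(\Omega)$ inner product. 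Once that bookkeeping is done, everything else is a direct application of the abstract machinery of \S\ref{sec:hs}. I would also remark that the resulting realisation is a genuine \emph{subspace} of $\tH^{-s}(\Omega)$ precisely because we are realising the dual of a subspace of $H^s(\Omega)$; when $H^s_0(\Omega)=H^s(\Omega)$ (the situation studied in \S\ref{subsec:Hs0vsHs}) the annihilator $\tH^{-s}(\Omega)\cap H^{-s}_{\partial\Omega}$ is trivial and one recovers Theorem~\ref{thm:DualityTheorem}.
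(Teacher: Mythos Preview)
Your proof is correct and follows essentially the same approach as the paper's: apply Lemma~\ref{lem:hs_orth} with $H=H^s(\Omega)$, $\cH=\tH^{-s}(\Omega)$, $V=H^s_0(\Omega)$, and then identify the annihilator. The paper compresses your annihilator computation into the chain $\big(\scrD(\Omega)|_\Omega\big)^{a,\tH^{-s}(\Omega)}=\tH^{-s}(\Omega)\cap(\scrD(\Omega))^{a,H^{-s}(\R^n)}=\tH^{-s}(\Omega)\cap H^{-s}_{\Omega^c}=\tH^{-s}(\Omega)\cap H^{-s}_{\partial\Omega}$, invoking Lemma~\ref{lem:orth_lem} for the middle step and the inclusion $\tH^{-s}(\Omega)\subset H^{-s}_{\overline\Omega}$ for the last, but this is exactly what you spell out.
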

\begin{proof}
Since $H^s_0(\Omega)$ is a closed subspace of $H^s(\Omega)$, by Lemma \ref{lem:hs_orth} $(H^s_0(\Omega))^*$ can be unitarily realised as a closed subspace of $(H^s(\Omega))^*$, which we identify with $\tH^{-s}(\Omega)$ using the operator $\cI^*_{-s}$ of Theorem~\ref{thm:DualityTheorem}.
Explicitly, $(H^s_0(\Omega))^*$ is identified with the orthogonal complement of the annihilator of $H^s_0(\Omega)$ in $\tH^{-s}(\Omega)$, which annihilator satisfies
\begin{align*}
H^s_0(\Omega)^{a,\tH^{-s}(\Omega)}
&=\big(\scrD(\Omega)|_\Omega\big)^{a,\tH^{-s}(\Omega)}
=\tH^{-s}(\Omega) \cap\big(\scrD(\Omega)\big)^{a,H^{-s}(\R^n)}\\
&=\tH^{-s}(\Omega)\cap H^{-s}_{\Omega^c}
=\tH^{-s}(\Omega) \cap H^{-s}_{\partial\Omega}.
\end{align*}
\end{proof}

\begin{table}[tb!]
\begin{center}\begin{tabular}{c|c|c}
The dual of & is isomorphic to & via the isomorphism\\ \hline &&\\[-3mm]
$H^s(\R^n)$ & $H^{-s}(\R^n)$ & $\cI^s$\\
$\tH^s(\Omega)$ & $(H^{-s}_{\Omega^c})^\perp$ & $\cIhat_s$\\ %
& $H^{-s}(\Omega)$ & $\cI_s%
$\\
&$H^{-s}(\R^n)/(H^{-s}_{\Omega^c})%
$ & $\cIcheck_s%
$\\
$H^s(\Omega)$ & $\tH^{-s}(\Omega)$ & $\cI_{-s}^*$\\
$H^{s}_{\Omega^c}$ & $(\tH^{-s}(\Omega))^\perp$ &$\cItilde_s$\\
$(H^{s}_{\Omega^c})^\perp$ & $\tH^{-s}(\Omega)$ &$\cIhat_{-s}^*$\\
$\big(\tH^{s}(\Omega)\big)^\perp$ & $H^{-s}_{\Omega^c}$ & $\cItilde_{-s}^*$\\
$H^s_0(\Omega)$ &
$(\tH^{-s}(\Omega) \cap H^{-s}_{\partial\Omega})^{\perp, \tH^{-s}(\Omega)}$
\end{tabular}\end{center}
\caption{A summary of the duality relations proved in \S\ref{subsec:dual1} and \S\ref{subsec:DualAnnih}.\label{tab:duals}}
\end{table}

\begin{figure}[htb!]
\begin{center}
\begin{tikzpicture}
\hspace{-15mm}
\matrix[matrix of math nodes,
column sep={12pt},
row sep={40pt,between origins}, %
text height=1.5ex, text depth=0.25ex] (s)
{
|[name=DO]|  \scrD(\Omega)&
|[name=DoO]|  &%
|[name=DR]|  \scrD(\R^n)&
|[name=SR]|  \scrS(\R^n)&&
|[name=L2R]| L^2(\R^n)
\\
|[name=tH]| \tH^s(\Omega)  &
|[name=cH]| \cirHs(\Omega) &
|[name=sH]| H^s_{\overline{\Omega}} &
|[name=HR]| \hspace{3mm}H^s(\R^n)\hspace{2mm} =\hspace{-5mm} &
|[name=Pr]|  (H^s_{\Omega^c})^\perp \; \oplus \; H^s_{\Omega^c}
&
|[name=SsR]|  \scrS^*(\R^n)
\\
& |[name=H0O]| H^s_0(\Omega)  &&
|[name=HO]| H^s(\Omega)  &
|[name=DsO]| \scrD^*(\Omega) &
\\
|[name=HOs]| \big(H^{-s}(\Omega)\big)^* &
|[name=HmR]| H^{-s}(\R^n)&
|[name=HRs]| \big(H^{-s}(\R^n)\big)^* &
|[name=tHOs]| \big(\tH^{-s}(\Omega)\big)^*&&
|[name=QOs]| %
\hspace{-2mm}\big((\tH^{-s}(\Omega))^\perp\big)^*
\\
};
\draw[right hook->]
  (DO) edge (DR)
  (DR) edge (SR)
  (SR) edge (L2R)
  (tH) edge (cH)
  (cH) edge (sH)
  (sH) edge (HR)
  (Pr) edge (SsR)   %
  (HO) edge  (DsO)
  (H0O) edge (HO)
  (DO) edge node[auto] {\(\iota\)} (tH)
  (SR) edge node[auto] {\(\iota\)} (HR)
  (L2R) edge node[auto] {\(\iota\)} (SsR)
    (tH) edge node[swap] {\(\hs{-8}|_\Omega\)} (H0O)
  ;
\draw[->>]
  (HR) edge node[auto] {\(\hs{-0.7}|_\Omega\)} (HO)
;
\draw[right hook->>]
  (tH) edge node[auto] {\(\cI_{-s}^*\)} (HOs)
  (HO) edge node[auto,swap] {\(\cI_{-s}\)} (tHOs)
  (Pr.south east) to node[auto,pos=0.5] {\(\cItilde_s^*\hs{-2}\)} (QOs)
  (HmR) edge node[auto,swap] {\(R_{-s}\)} (HRs)
;
\draw[left hook->>]
  (Pr) edge node[pos=0.5] {\(\hs{-10}|_\Omega\)} (HO)  %
  (HR) edge node[pos=0.77] {\(\hs{9}\cI^{-s}\)}(HRs)
  (HR) edge node[pos=0.77] {\(\hs{11}\cJ_{2s}\)}  (HmR)
  (Pr) edge node[pos=0.77] {\(\hs{12}\cIhat_{-s}\)} (tHOs)
;
\end{tikzpicture}
\end{center}
\caption{A representation, as a commutative diagram, of the relationships between the Sobolev spaces and the isomorphisms between them described in \S\ref{subsec:SobolevDef} and \S\ref{subsec:DualAnnih}.
Here $s\in\R$, $\Omega\subset\R^n$ is open,
${\Omega^c}:=\R^n\setminus\Omega$,
$\hookrightarrow$ denotes an embedding,
$\twoheadrightarrow$ a surjective mapping, %
$\hookdoubleheadrightarrow$ a unitary isomorphism, and
$\iota$ denotes the standard identification of Lebesgue functions with distributions, namely $\iota:L^2\Rn\to\scrS^*\Rn$, with $\iota u(v):=(u,v)_{L^2\Rn}$, for $u\in L^2\Rn$, $v\in\scrS\Rn$.
Note that $\cirHs(\Omega)$ is defined only when $s\ge0$, see \S\ref{subsec:3spaces}.
In this diagram the first row contains spaces of functions, the second distributions on $\R^n$, and the third distributions on $\Omega$.
\label{fig:Sobolev}}
\end{figure}
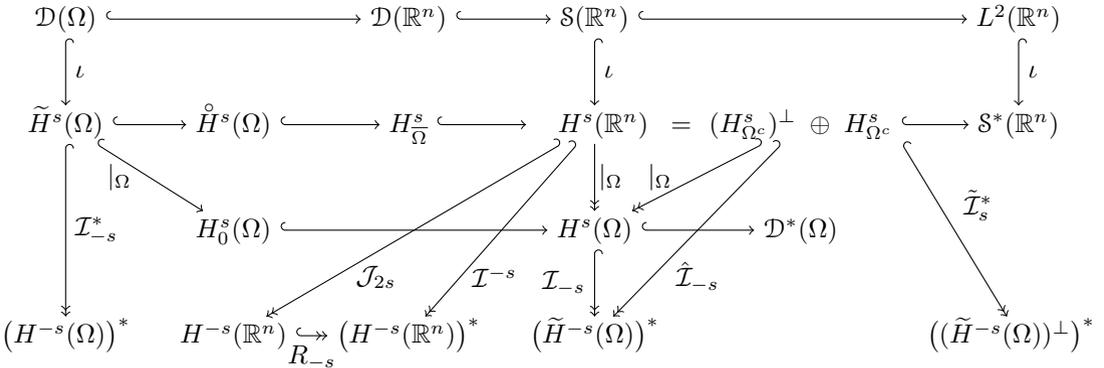
\subsection{\texorpdfstring{$s$}{s}-nullity}
\label{subsec:Polarity}
In order to compare Sobolev spaces defined on different open sets (which we do in \S\ref{subsec:DiffDoms}), and to study the relationship between the different spaces (e.g.\ $\tH^s(\Omega)$, $\cirHs(\Omega)$ and $H^s_{\overline{\Omega}}$) on a given open set $\Omega$ (which we do in \S\ref{subsec:3spaces}), we require the concept of $s$-nullity of subsets of $\R^n$. %
\begin{defn}
For $s\in\R$ we say that a set $E\subset\R^n$ is $s$-null if there are no non-zero elements of $H^{s}(\R^n)$ supported entirely in $E$ (equivalently, if $H^{s}_{F}=\{0\}$ for every closed set $F\subset E$).
\end{defn}
\noindent We make the trivial remark that if $F$ is closed then $F$ is $s$-null if and only if $H^s_F=\{0\}$.
\begin{rem}\label{rem:polarity}
While the terminology ``$s$-null'' is our own, the concept it describes has been studied previously, apparently first by H\"{o}rmander and Lions in relation to properties of Sobolev spaces normed by Dirichlet integrals \cite{HoLi:56}, and then subsequently by other authors in relation to the removability of singularities for elliptic partial differential operators \cite{Li:67a,Maz'ya}, and to the approximation of functions by solutions of the associated elliptic PDEs \cite{Po:72}.
For integer $s<0$, $s$-nullity is referred to as $(-s)$-polarity in \cite[Definition 2]{HoLi:56}, ``$2$-$(-s)$ polarity'' in \cite{Li:67a} and ``$(2,-s)$-polarity'' in \cite[\S 13.2]{Maz'ya}. For $s>0$ and $E$ closed, $s$-nullity coincides with the concept of ``sets of uniqueness'' for $H^s(\R^n)$, as considered in \cite[\S11.3]{AdHe} and \cite[p.~692]{Maz'ya}. For $s>0$ and $E$ with empty interior, $s$-nullity coincides with the concept of $(s,2)$-stability, discussed in \cite[\S11.5]{AdHe}.
For a more detailed comparison with the literature see \cite[\S2.2]{HewMoi:15}.
\end{rem}

To help us throughout the paper interpret characterisations in terms of $s$-nullity, the following lemma collects useful results relating $s$-nullity to topological and geometrical properties of a set.
The results in Lemma \ref{lem:polarity} are a special case of those recently presented in \cite{HewMoi:15} (where $s$-nullity is called $(s,2)$-nullity) in the more general setting of the Bessel potential spaces $H^{s,p}(\R^n)$, $s\in\R$, $1<p<\infty$. Many results in \cite{HewMoi:15} are derived using the equivalence between $s$-nullity and the vanishing of certain set capacities from classical potential theory, drawing heavily on results in \cite{AdHe} and \cite{Maz'ya}.
\cite{HewMoi:15} also contains a number of concrete examples and counterexamples illustrating the general results.
Regarding point \rf{gg0} of the lemma, following \cite[\S3]{Triebel97FracSpec}, given $0\leq d\leq n$ we call a closed set $F\subset \R^n$ with $\dimH(F)=d$ a $d$-set if there exist constants $c_1,c_2>0$ such that
\begin{align}
\label{eq:dSet}
0<c_1 r^d \leq \mathcal H^d(B_r(\bx)\cap F) \leq c_2 r^d<\infty, \qquad \textrm{for all } \bx\in F, \; 0<r<1,
\end{align}
where $\mathcal H^d$ is the $d$-dimensional Hausdorff measure on $\R^n$. 
Condition \eqref{eq:dSet} may be understood as saying that $d$-sets are everywhere locally $d$-dimensional.
Note that the definition of $d$-set includes as a special case all Lipschitz $d$-dimensional manifolds, $d\in\{0,1,\ldots, n\}$.
\begin{lem}[\cite{HewMoi:15}]
\label{lem:polarity}
Let $E,E'\subset \R^n$ be arbitrary, $\Omega\subset \R^n$ be non-empty and open, and $s\in\R$.
\begin{enumerate}[(i)]
\item \label{aa}If $E$ is $s$-null and $E'\subset E$ then $E'$ is $s$-null.
\item \label{bb}If $E$ is $s$-null and $t>s$ then $E$ is $t$-null.
\item \label{cc}If $E$ is $s$-null then ${\rm int}(E)=\emptyset$.
\item \label{dd}If $s>n/2$ then $E$ is $s$-null if and only if ${\rm int}(E)=\emptyset$.
\item \label{ll} Let $E$ be $s$-null and let $F\subset\R^n$ be closed and $s$-null. Then $E\cup F$ is $s$-null.
\item \label{ll0} If $s\leq 0$ then a countable union of Borel $s$-null sets is $s$-null.
\item \label{ee}If $s\geq0$ and $E$ is Lebesgue-measurable with $m(E)=0$, then $E$ is $s$-null.
\item \label{ff}If $E$ is Lebesgue-measurable then $E$ is $0$-null if and only if $m(E)=0$.
\item \label{mm} There exists a compact set $K\subset\R^n$ with ${\rm int}(K)=\emptyset$ and $m(K)>0$, which is not $s$-null for any $s\leq n/2$.
\item \label{jj}If $s<-n/2$ there are no non-empty $s$-null sets.
\item \label{ii} A non-empty countable set is $s$-null if and only if $s\ge-n/2$.
\item \label{hh} If $-n/2<s\leq 0$ and ${\rm dim_H}(E)< n+2s$, then $E$ is $s$-null.
\item \label{gg} If $-n/2\leq s<0$ and $E$ is Borel and $s$-null, then ${\rm dim_H}(E)\leq n+2s$.
\item \label{gg1} For each $0\leq d\leq n$ there exist compact sets $K_1,K_2\subset \R^n$ with $\dimH(K_1)$ $=\dimH(K_2)=d$, such that $K_1$ is $(d-n)/2$-null and $K_2$ is not $(d-n)/2$-null.
\item \label{gg0} If $0<d<n$ and $F\subset \R^n$ is a compact $d$-set, or a $d$-dimensional hyperplane (in which case $d$ is assumed to be an integer) then $F$ is $(d-n)/2$-null.
\item \label{kk1}
If ${\rm int}(\Omega^c)\neq \emptyset$, then $\deO$ is not $s$-null for $s<-1/2$. (In particular this holds if $\Omega\neq \R^n$ is $C^0$.)
\item \label{kk0}
If $\Omega$ is $C^0$ and $s\geq 0$, then $\partial\Omega$ is $s$-null.
Furthermore, for $n\geq2$ there exists a bounded $C^0$ open set whose boundary is not $s$-null for any $s<0$.
\item \label{kk2}
If $\Omega$ is $C^{0,\alpha}$ for some $0<\alpha<1$ and $s> -\alpha/2$, then $\partial\Omega$ is $s$-null.
Furthermore, for $n\geq2$ there exists a bounded $C^{0,\alpha}$ open set whose boundary is not $s$-null for any $s<-\alpha/2$.
\item \label{kk} If $\Omega$ is Lipschitz then $\partial\Omega$ is $s$-null if and only if $s\geq -1/2$.
\end{enumerate}
\end{lem}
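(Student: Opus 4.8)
The plan is to establish the two implications separately, reading off necessity from Lemma~\ref{lem:polarity}\rf{kk1} and obtaining sufficiency by localising to pieces covered by the sharp result Lemma~\ref{lem:polarity}\rf{gg0}.

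For the \emph{necessity} of $s\geq-1/2$, one first notes that a Lipschitz open set $\Omega\neq\R^n$ has ${\rm int}(\Omega^c)\neq\emptyset$: in the coordinates of any boundary chart the region strictly above the defining Lipschitz graph is open and disjoint from $\overline\Omega$. (Equivalently, this is exactly the parenthetical remark in Lemma~\ref{lem:polarity}\rf{kk1}, since Lipschitz open sets are $C^0$.) Lemma~\ref{lem:polarity}\rf{kk1} then gives that $\partial\Omega$ is not $s$-null for $s<-1/2$, which is the contrapositive of the implication we want.

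For the \emph{sufficiency} --- that $\partial\Omega$ is $s$-null whenever $s\geq-1/2$ --- the plan is to prove that $\partial\Omega$ is $(-1/2)$-null, whereupon Lemma~\ref{lem:polarity}\rf{bb} upgrades this to $s$-nullity for every $s>-1/2$ as well. When $n=1$ the boundary is a countable closed set and Lemma~\ref{lem:polarity}\rf{ii} disposes of the whole equivalence at once, so I would assume $n\geq2$. The idea is then to use the countable subadditivity of $(-1/2)$-nullity supplied by Lemma~\ref{lem:polarity}\rf{ll0} (valid since $-1/2\leq0$): by the definition of a Lipschitz domain and the Lindel\"of property, cover $\partial\Omega$ by countably many closed balls $\overline{B_k}$, each small enough that in suitable rotated coordinates $\partial\Omega\cap\overline{B_k}$ is contained in the graph $\Gamma_k$ of a Lipschitz function over a closed Euclidean ball in $\R^{n-1}$. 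Each $\Gamma_k$ is compact and (see below) is an $(n-1)$-set, so Lemma~\ref{lem:polarity}\rf{gg0} with $d=n-1\in(0,n)$ shows $\Gamma_k$ is $(-1/2)$-null; hence $\partial\Omega\cap\overline{B_k}\subset\Gamma_k$ is $(-1/2)$-null by Lemma~\ref{lem:polarity}\rf{aa}, and $\partial\Omega=\bigcup_k(\partial\Omega\cap\overline{B_k})$ is $(-1/2)$-null by Lemma~\ref{lem:polarity}\rf{ll0}. (If $\partial\Omega$ happens to be bounded one may skip the covering and apply Lemma~\ref{lem:polarity}\rf{gg0} to $\partial\Omega$ itself, now a compact $(n-1)$-set.)

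The one step needing genuine care --- and the main obstacle --- is verifying that the graph of a Lipschitz function over a \emph{closed ball} $\overline D\subset\R^{n-1}$ is an $(n-1)$-set in the sense of \eqref{eq:dSet}. The upper Ahlfors bound is immediate from the Lipschitz estimate; for the lower bound one must check that no degeneracy occurs at boundary points of $\overline D$, where $\overline D$ locally fills only part of a neighbourhood. This is harmless: a ball in $\R^{n-1}$ contains a fixed proportion of every small concentric ball about each of its points, and the $\mathcal H^{n-1}$-measure of a Lipschitz graph is bounded below by the $(n-1)$-dimensional Lebesgue measure of its base. It is essential here that we work precisely at the endpoint exponent $(d-n)/2=-1/2$ for $d=n-1$: the softer estimates in Lemma~\ref{lem:polarity}\rf{hh} and \rf{kk2} only yield $s$-nullity for $s>-1/2$, so the sharp $d$-set statement Lemma~\ref{lem:polarity}\rf{gg0} really is needed.
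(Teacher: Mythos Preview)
The paper does not provide its own proof of Lemma~\ref{lem:polarity}; the lemma is stated as a collection of results taken from \cite{HewMoi:15}, where the proofs rely on potential-theoretic arguments and set capacities from \cite{AdHe,Maz'ya}. So there is no in-paper argument to compare against.

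That said, your bootstrap proof of part~\rf{kk} from the earlier parts of the lemma is correct and is a clean internal derivation. The necessity direction via part~\rf{kk1} is immediate. For sufficiency, your reduction to the endpoint $s=-1/2$, the $n=1$ case via part~\rf{ii}, and for $n\geq 2$ the covering by compact Lipschitz graph pieces combined with parts~\rf{gg0}, \rf{aa} and~\rf{ll0} all go through as stated. Your justification that a Lipschitz graph over a closed ball in $\R^{n-1}$ is an $(n-1)$-set is sound: the graph map $x'\mapsto(x',f(x'))$ is bi-Lipschitz, which transports the Ahlfors regularity of the closed ball, and the lower bound at boundary points of the base ball holds because a Euclidean ball meets every small concentric ball about any of its points in a set of comparable $(n-1)$-measure. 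Your emphasis that the sharp $d$-set statement~\rf{gg0} (rather than~\rf{hh} or~\rf{kk2}) is what captures the critical exponent $-1/2$ is exactly right.
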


\subsection{Equality of spaces defined on different subsets of  \texorpdfstring{$\R^n$}{Rn}}\label{subsec:DiffDoms}
The concept of $s$-nullity defined in \S\ref{subsec:Polarity} provides a characterization of when Sobolev spaces defined on different open or closed sets are or are not equal. For two subsets $E_1$ and $E_2$ of $\R^n$ we use the notation $E_1\ominus E_2$ to denote the symmetric difference between $E_1$ and $E_2$, i.e.
\begin{align*}%
E_1\ominus E_2:=(E_1\setminus E_2)\cup( E_2\setminus E_1)=(E_1\cup E_2)\setminus (E_1\cap E_2).
\end{align*}

The following elementary result is a special case of \cite[Proposition 2.11]{HewMoi:15}.
\begin{thm}[{\cite[Proposition 2.11]{HewMoi:15}}]
\label{thm:Hs_equality_closed}
Let $F_1,F_2$ be closed subsets of $\R^n$, and  let $s\in\R$. Then the following statements are equivalent:
\begin{enumerate}[(i)]
\item \label{a}$F_1\ominus F_2$ is $s$-null.
\item \label{b} $F_1\setminus F_2$ and $ F_2\setminus F_1$ are both $s$-null.
\item \label{c} $H^s_{F_1}=H^s_{F_2}$.
\end{enumerate}
\end{thm}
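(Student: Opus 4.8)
\emph{Proof plan.} I would prove the cycle $(i)\Rightarrow(ii)\Rightarrow(iii)\Rightarrow(i)$. The implications $(i)\Rightarrow(ii)$ and $(iii)\Rightarrow(i)$ are soft, while $(ii)\Rightarrow(iii)$ is the crux. (For orientation, note that $(iii)\Rightarrow(ii)$ is also immediate: if $F\subset F_1\setminus F_2$ is closed and $u\in H^s_F$, then $\supp u\subset F_1$, so $u\in H^s_{F_1}=H^s_{F_2}$, whence $\supp u\subset F_2\cap(F_1\setminus F_2)=\emptyset$; similarly with $F_1,F_2$ swapped.)

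\emph{The soft directions.} Since $F_1\setminus F_2$ and $F_2\setminus F_1$ are subsets of $F_1\ominus F_2$, Lemma~\ref{lem:polarity}\rf{aa} immediately gives $(i)\Rightarrow(ii)$. For $(iii)\Rightarrow(i)$, let $F\subset F_1\ominus F_2$ be closed and $u\in H^s_F$; I want $u=0$. Because $\supp u\subset F_1\ominus F_2\subset F_1\cup F_2$, we have $\supp u=G_1\cup G_2$ where $G_i:=\supp u\cap F_i$ is closed; the $G_i$ are disjoint since $F_1\ominus F_2$ avoids $F_1\cap F_2$, and $G_i\subset F_i\setminus F_{3-i}$. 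Each $G_i$ is $s$-null: any $v\in H^s_{G_i}$ has $\supp v\subset G_i\subset F_i$, hence $v\in H^s_{F_i}=H^s_{F_{3-i}}$, so $\supp v\subset G_i\cap F_{3-i}=\emptyset$. By Lemma~\ref{lem:polarity}\rf{ll} the closed set $\supp u=G_1\cup G_2$ is then $s$-null, so $u\in H^s_{\supp u}=\{0\}$. As $F$ was an arbitrary closed subset of $F_1\ominus F_2$, that set is $s$-null. (The identical decomposition, now deducing the $s$-nullity of $G_i$ from $G_i\subset F_i\setminus F_{3-i}$ via Lemma~\ref{lem:polarity}\rf{aa}, also proves $(ii)\Rightarrow(i)$ directly.)

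\emph{The crux, $(ii)\Rightarrow(iii)$.} By symmetry it suffices to show $H^s_{F_1}\subset H^s_{F_2}$: that is, any $u\in H^s(\R^n)$ with $\supp u\subset F_1$ in fact satisfies $\supp u\subset F_2$. The obstruction is that $F_1\setminus F_2$ need not be closed, so one cannot apply the definition of $s$-nullity to it directly; the remedy is to localise. I would fix $\bx\in F_2^c$ and, using that $F_2$ is closed, pick $r>0$ with $\overline{B_r(\bx)}\subset F_2^c$. Then $\overline{B_r(\bx)}\cap\supp u$ is a closed subset of $F_1\cap F_2^c=F_1\setminus F_2$, hence $s$-null by Lemma~\ref{lem:polarity}\rf{aa}. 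Choosing $\chi\in\scrD(B_r(\bx))$ with $\chi\equiv1$ on $B_{r/2}(\bx)$, and using the standard fact that multiplication by such a $\chi$ preserves $H^s(\R^n)$ for every $s\in\R$, we get $\supp(\chi u)\subset\supp\chi\cap\supp u\subset\overline{B_r(\bx)}\cap\supp u$, so $\chi u$ is an element of $H^s(\R^n)$ supported in an $s$-null set and therefore vanishes. Since $\chi\equiv1$ near $\bx$, it follows that $u$ vanishes in a neighbourhood of $\bx$, i.e.\ $\bx\notin\supp u$. Letting $\bx$ range over $F_2^c$ gives $\supp u\subset F_2$. The symmetric argument, using the $s$-nullity of $F_2\setminus F_1$, gives the reverse inclusion, so $H^s_{F_1}=H^s_{F_2}$ and the cycle is complete.
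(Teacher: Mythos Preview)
Your argument is correct. Note, however, that the paper does not actually supply its own proof of this theorem: it is quoted from \cite[Proposition~2.11]{HewMoi:15} as an elementary result, so there is nothing in the present paper to compare against. Your localisation argument for $(ii)\Rightarrow(iii)$ --- cutting off with $\chi\in\scrD(B_r(\bx))$ to land in a closed subset of $F_1\setminus F_2$ --- is the natural approach, and your use of Lemma~\ref{lem:polarity}\rf{ll} to recombine the closed pieces $G_1,G_2$ in $(iii)\Rightarrow(i)$ is clean. The proof is self-contained modulo the standard fact that multiplication by $\scrD(\R^n)$ functions preserves $H^s(\R^n)$, which the paper also takes for granted elsewhere.
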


By combining Theorem \ref{thm:Hs_equality_closed} with the duality result of Theorem \ref{thm:DualityTheorem} one can deduce a corresponding result about spaces defined on open subsets. The following theorem generalises \cite[Theorem~13.2.1]{Maz'ya}, which concerned the case $\Omega_1\subset\Omega_2=\R^n$.
The special case where $\R^n\setminus\Omega_1$ is a $d$-set was considered in \cite{Tri:08}. (That result was used in \cite{HewMoi:15} to prove item~\rf{gg0} in Lemma~\ref{lem:polarity} above.)
\begin{thm}
\label{thm:Hs_equality_open}
Let $\Omega_1,\Omega_2$ be non-empty, open subsets of $\R^n$, and let $s\in\R$. Then the following statements are equivalent:
\begin{enumerate}[(i)]
\item \label{a0}$\Omega_1\ominus\Omega_2$ is $s$-null.
\item \label{b0}$\Omega_1\setminus\Omega_2$ and $\Omega_2\setminus\Omega_1$ are both $s$-null.
\item \label{c0} $H^{s}(\Omega_1)=H^{s}(\Omega_2)$, in the sense that
$\!\big( H^s_{\Omega_1^c}\big)^\perp \!
=\!\big( H^s_{\Omega_2^c}\big)^\perp \!$
(recall from \eqref{eq:RestrIsUnitary} that $(H^s_{\Omega^c})^\perp\cong H^s(\Omega)$ for any non-empty open $\Omega\subset \R^n$).
\item \label{d0} $\tH^{-s}(\Omega_1)=\tH^{-s}(\Omega_2)$.
\end{enumerate}
\end{thm}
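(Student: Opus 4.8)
The plan is to prove the cycle of implications $\rf{a0}\Leftrightarrow\rf{b0}$ (immediate from the definition of symmetric difference together with Lemma~\ref{lem:polarity}\rf{aa}, since each of $\Omega_1\setminus\Omega_2$ and $\Omega_2\setminus\Omega_1$ is a subset of $\Omega_1\ominus\Omega_2$ and their union \emph{is} $\Omega_1\ominus\Omega_2$), then to establish $\rf{b0}\Leftrightarrow\rf{d0}$ by passing to closures $\overline{\Omega_i}$ and applying Theorem~\ref{thm:Hs_equality_closed}, and finally to show $\rf{c0}\Leftrightarrow\rf{d0}$ by duality, using Theorem~\ref{thm:DualityTheorem}.

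For $\rf{b0}\Leftrightarrow\rf{d0}$, recall that $\tH^{-s}(\Omega_i)=\big(H^{s}_{\Omega_i^c}\big)^{a,H^{-s}(\R^n)}$ by Lemma~\ref{lem:orth_lem} (with $s$ replaced by $-s$), so $\tH^{-s}(\Omega_1)=\tH^{-s}(\Omega_2)$ holds if and only if the annihilators agree, which by \eqref{eq:AnnihilatorResult} is equivalent to $H^{s}_{\Omega_1^c}=H^{s}_{\Omega_2^c}$. Now $\Omega_i^c$ is closed, so Theorem~\ref{thm:Hs_equality_closed} applies: $H^{s}_{\Omega_1^c}=H^{s}_{\Omega_2^c}$ iff $\Omega_1^c\ominus\Omega_2^c$ is $s$-null. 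The key observation is the set-theoretic identity $\Omega_1^c\ominus\Omega_2^c=\Omega_1\ominus\Omega_2$ (taking complements swaps the two set differences but leaves the symmetric difference unchanged), so this is exactly condition~\rf{a0}, equivalently \rf{b0}. One subtlety to handle carefully: $\tH^{-s}(\Omega_i)$ is the $H^{-s}(\R^n)$-closure of $\scrD(\Omega_i)$, and one must confirm that its annihilator in $H^s(\R^n)$ under the pairing $\langle\cdot,\cdot\rangle_{-s}$ is precisely $H^{s}_{\Omega_i^c}$ — this is the content of Lemma~\ref{lem:orth_lem}, so nothing new is needed, but the direction of the pairing must be tracked.

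For $\rf{c0}\Leftrightarrow\rf{d0}$: by Theorem~\ref{thm:DualityTheorem}, $H^{s}(\Omega_i)$ is unitarily realised as $\big(\tH^{-s}(\Omega_i)\big)^*$ via $\cI_{-s}$, and concretely as the subspace $\big(H^{s}_{\Omega_i^c}\big)^\perp\subset H^{s}(\R^n)$ via the restriction operator $|_{\Omega_i}$ (cf.\ \eqref{eq:RestrIsUnitary}). Since taking orthogonal complements is an order-reversing involution on closed subspaces, $\big(H^{s}_{\Omega_1^c}\big)^\perp=\big(H^{s}_{\Omega_2^c}\big)^\perp$ iff $H^{s}_{\Omega_1^c}=H^{s}_{\Omega_2^c}$ iff (as just shown) $\tH^{-s}(\Omega_1)=\tH^{-s}(\Omega_2)$. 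This closes the cycle.

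I expect the main (though modest) obstacle to be purely bookkeeping: the statement of~\rf{c0} deliberately phrases the equality $H^{s}(\Omega_1)=H^{s}(\Omega_2)$ as an equality of the concrete subspaces $\big(H^{s}_{\Omega_i^c}\big)^\perp$ of $H^{s}(\R^n)$ rather than as an abstract isomorphism, precisely because for general (non-Lipschitz) $\Omega_i$ there is no canonical way to compare the abstract quotient spaces $H^{s}(\Omega_1)$ and $H^{s}(\Omega_2)$ unless one has already fixed the identification with subspaces of $H^{s}(\R^n)$. So the one genuine point to be explicit about is \emph{why} equality of the abstract spaces $H^{s}(\Omega_i)$ (as spaces of distributions on the \emph{common} ambient $\R^n$, restricted to possibly different open sets) should be read as equality of these subspaces — and the answer is simply that the restriction maps $|_{\Omega_i}$ are the canonical unitary identifications, so no information is lost or gained. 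Everything else is a direct application of the duality machinery and of Theorem~\ref{thm:Hs_equality_closed} already in hand.
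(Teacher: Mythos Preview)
Your proposal is correct and follows essentially the same route as the paper's one-line proof: apply Theorem~\ref{thm:Hs_equality_closed} with $F_j:=\Omega_j^c$ (your plan says ``closures $\overline{\Omega_i}$'' but your argument correctly uses complements), then use the annihilator/duality relations of Lemma~\ref{lem:orth_lem} and Theorem~\ref{thm:DualityTheorem} to pass to \rf{c0} and \rf{d0}. One small point: your stated justification for \rf{b0}$\Rightarrow$\rf{a0} via Lemma~\ref{lem:polarity}\rf{aa} alone is incomplete, since that part concerns subsets, not unions, and the union of two $s$-null sets need not be $s$-null without extra hypotheses (cf.\ Lemma~\ref{lem:polarity}\rf{ll},\rf{ll0}); but this is harmless, because the equivalence \rf{a0}$\Leftrightarrow$\rf{b0} is already part of Theorem~\ref{thm:Hs_equality_closed} applied to the closed sets $\Omega_j^c$, which you invoke anyway.
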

\begin{proof}
The result follows from Theorem \ref{thm:DualityTheorem} and Theorem \ref{thm:Hs_equality_closed} with $F_j:=(\Omega_j)^c$, $j=1,2$.
\end{proof}

\begin{rem}
For non-empty open $\Omega_1,\Omega_2\subset \R^n$, the set $\Omega_1\ominus\Omega_2$ has empty interior if and only if
$\overline{\Omega_1} =\overline{\Omega_2}$.
Hence, by Lemma \ref{lem:polarity}\rf{cc},\rf{dd}, $\overline{\Omega_1} =\overline{\Omega_2}$ %
 is a necessary condition for the statements \rf{a0}--\rf{d0} of Theorem \ref{thm:Hs_equality_open} to hold, and a sufficient condition when $s>n/2$.
But sufficiency does not extend to $s\leq n/2$: a counter-example is provided by $\Omega_1=\R^n$ and $\Omega_2=K^c$, where $K$ is any compact non-$(n/2)$-null set (cf.\ Lemma \ref{lem:polarity}\rf{mm}).
\end{rem}

For the $\cirHs(\Omega)$ spaces, $s\geq0$, the following sufficient (but not necessary) condition for equality is trivial.
\begin{lem}
\label{lem:CircEquality}
If $\Omega_1,\Omega_2\subset \R^n$ are non-empty and open, with $m(\Omega_2\ominus \Omega_1)=0$, then $\cirHs(\Omega_1)=\cirHs(\Omega_2)$ for all $s\geq 0$.
\end{lem}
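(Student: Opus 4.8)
The plan is to show the two defining conditions for membership in $\cirHs$ are insensitive to modifying $\Omega$ on a Lebesgue-null set. Recall from \S\ref{subsec:SobSpacesClosedOpen} that
\[
\cirHs(\Omega) = \big\{u\in H^s(\R^n): u= 0 \text{ a.e.\ in } \Omega^c\big\},
\]
so the space depends on $\Omega$ only through the measure-theoretic equivalence class of $\Omega^c$ (equivalently of $\Omega$). First I would observe that the hypothesis $m(\Omega_1\ominus\Omega_2)=0$ means exactly that the characteristic functions $\chi_{\Omega_1}$ and $\chi_{\Omega_2}$ agree almost everywhere on $\R^n$, hence also $\chi_{\Omega_1^c}=\chi_{\Omega_2^c}$ a.e.

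Now fix $s\geq 0$ and take any $u\in\cirHs(\Omega_1)\subset H^s(\R^n)$. By definition $u=0$ a.e.\ on $\Omega_1^c$, i.e.\ the set $N:=\{\bx\in\R^n: u(\bx)\neq 0\}\cap\Omega_1^c$ has $m(N)=0$. Since $m(\Omega_1^c\ominus\Omega_2^c)=0$, the set $\{\bx: u(\bx)\neq 0\}\cap\Omega_2^c$ is contained in $N\cup(\Omega_1^c\ominus\Omega_2^c)$, which is Lebesgue-null; therefore $u=0$ a.e.\ on $\Omega_2^c$, and so $u\in\cirHs(\Omega_2)$. This gives $\cirHs(\Omega_1)\subset\cirHs(\Omega_2)$, and the reverse inclusion follows by symmetry of the hypothesis in $\Omega_1,\Omega_2$. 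Since both spaces carry the norm inherited from $H^s(\R^n)$, equality as sets is equality as Hilbert spaces.

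There is essentially no obstacle here: the only subtlety is the bookkeeping that for $u\in H^s(\R^n)$ with $s\geq 0$ one genuinely has a well-defined pointwise a.e.\ equivalence class in $L^2(\R^n)$ (so that ``$u=0$ a.e.\ in $\Omega_2^c$'' makes sense), which is why the restriction $s\geq 0$ appears and is exactly the setting in which $\cirHs$ was defined. One could alternatively phrase the argument via the zero-extension characterisation in Remark~\ref{rem:ZeroExtension}, but the direct measure-theoretic inclusion above is the shortest route. As the statement notes, the condition is sufficient but not necessary: one could have $\cirHs(\Omega_1)=\cirHs(\Omega_2)$ with $m(\Omega_1\ominus\Omega_2)>0$ when $s$-nullity (rather than Lebesgue-nullity) of the relevant sets already forces equality, but establishing that sharper statement is not needed here.
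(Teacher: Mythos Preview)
Your proof is correct and is exactly the trivial measure-theoretic argument the paper has in mind; the paper does not give a proof at all, simply noting before the lemma that the sufficient condition ``is trivial''. Your write-up makes explicit the one-line observation that the defining condition ``$u=0$ a.e.\ in $\Omega^c$'' depends only on the Lebesgue equivalence class of $\Omega$.
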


\subsection{Comparison of the ``zero trace'' subspaces of \texorpdfstring{$H^s(\R^n)$}{Hs(Rn)}}
\label{subsec:3spaces}

In \S\ref{subsec:SobSpacesClosedOpen} we defined three closed subspaces of $H^s(\R^n)$ associated with a non-empty open set $\Omega\subset\R^n$, namely $H^s_{\overline{\Omega}}$ and $\tH^s(\Omega)$ (both defined for all $s\in \R$) and $\cirHs(\Omega)$ (defined for $s\geq0$), which can all be viewed in some sense as ``zero trace'' spaces. %
We already noted (cf.\ \rf{eqn:inclusions}) the inclusions
\begin{align}\label{eqn:inclusionsRepeat}
\tH^s(\Omega)\subset \cirHs(\Omega) \subset H^s_{\overline{\Omega}},
\end{align}
for all $s\in\R$ (with $\cirHs(\Omega)$ present only for $s\geq 0$).
In this section we investigate conditions on $\Omega$ and $s$ under which the inclusions in \rf{eqn:inclusionsRepeat} are or are not equalities, and construct explicit counterexamples demonstrating that equality does not hold in general.

When $\Omega$ is a $C^0$ open set, both inclusions in \rf{eqn:inclusionsRepeat} are equalities.
The following result is proved in \cite[Theorem~3.29]{McLean} for $C^0$ sets with bounded boundary\footnote{We note however that the partition of unity argument appears not quite accurate in the proof of \cite[Theorem~3.29]{McLean}. For an alternative method of handling this part of the argument see the proof of Theorem \ref{thm:new2} below.}; the extension to general $C^0$ sets (as defined in \cite[Definition~1.2.1.1]{Gri}) follows from \eqref{eq:approx2} (cf.\ the proof of Theorem \ref{thm:new2} below).
We note that a proof of the equality $\tH^s(\Omega) = \cirHs(\Omega)$ for $s>0$ and $\Omega$ a $C^0$ open set can also be found in \cite[Theorem 1.4.2.2]{Gri}.
\begin{lem}[{\cite[Theorems 3.29, 3.21]{McLean}}]
\label{lem:sob_equiv}
Let $\Omega\subset \R^n$ be $C^0$ and let $s\in\R$.
Then $\tH^s(\Omega)= \cirHs(\Omega) =H^s_{\overline\Omega}$ (with $\cirHs(\Omega)$ present only for $s\geq 0$).
\end{lem}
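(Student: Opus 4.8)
The plan is to establish the two equalities $\tH^s(\Omega) = \cirHs(\Omega)$ (for $s \geq 0$) and $\cirHs(\Omega) = H^s_{\overline{\Omega}}$ separately, exploiting the chain of inclusions in \eqref{eqn:inclusionsRepeat}; thus in each case it suffices to prove the reverse inclusion. The essential tool is a local description of the boundary of a $C^0$ open set: near any boundary point, after a rotation, $\Omega$ coincides with the region lying above the graph of a continuous function $\phi:\R^{n-1}\to\R$. The key geometric consequence I would use is that such a region can be exhausted from inside by \emph{translates of itself}: if $\Omega$ locally equals $\{(\bx',x_n): x_n > \phi(\bx')\}$, then translating downward by $\eps e_n$ maps $\overline{\Omega}$ strictly into $\Omega$ locally, and more importantly translating \emph{upward} pushes the support of a distribution supported in $\overline{\Omega}$ into $\Omega$. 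This is the mechanism by which one converts a distribution supported on the boundary (or vanishing a.e.\ off $\Omega$) into one compactly supported strictly inside $\Omega$, which can then be mollified into $\scrD(\Omega)$.

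For the reduction to the bounded-boundary case, I would first handle $u \in H^s_{\overline{\Omega}}$ (or $u \in \cirHs(\Omega)$) of compact support, which is legitimate by the truncation result \eqref{eq:approx2}: any element can be approximated in $H^s(\R^n)$ by compactly supported elements whose support lies inside the original support. Once $u$ has compact support, only finitely many boundary charts are needed, so a finite partition of unity subordinate to these charts decomposes $u = \sum_k \chi_k u$; multiplication by a smooth cutoff is bounded on $H^s(\R^n)$, and $\chi_k u$ still lies in the relevant space with support in a single chart (for the $\cirHs$ case one uses that multiplication by a smooth function cannot enlarge the set where $u$ is nonzero a.e.). This is the place where the referenced inaccuracy in McLean's partition-of-unity argument is relevant, so I would be careful: rather than patch together global parametrisations with a common Lipschitz constant (which $C^0$ domains do not provide), one works chart by chart and only needs finite overlap on a compact set. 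For each piece, translate in the $e_n$-direction of that chart by a small amount $\eps > 0$ to obtain $u_{k,\eps}$ whose support is a positive distance from $\partial\Omega$ within the chart; translation is a unitary operator on $H^s(\R^n)$ and is strongly continuous, so $u_{k,\eps} \to \chi_k u$ in $H^s(\R^n)$ as $\eps \to 0$. Finally mollify $u_{k,\eps}$ with a sufficiently fine mollifier to land in $\scrD(\Omega)$, using that mollification converges in $H^s(\R^n)$ and, for small enough mollification radius, preserves the property of being supported in $\Omega$.

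The main obstacle I anticipate is precisely the verification that the translation direction and magnitude can be chosen \emph{uniformly} across the overlapping charts covering $\supp u$, so that a single global translation — or a partition-of-unity-weighted family of translations — moves all of $\supp u$ simultaneously into $\Omega$ without any piece escaping. For a general $C^0$ domain the chart directions vary, and one cannot translate globally in one fixed direction; the fix is to translate each localised piece $\chi_k u$ in its own chart direction and then sum, checking that the sum still approximates $u$ and that each translated-and-mollified piece is genuinely in $\scrD(\Omega)$. Keeping track of the interplay between the size of the cutoff supports, the translation parameter $\eps$, and the mollification radius — in that dependency order — is the delicate bookkeeping. For the $\cirHs(\Omega) = H^s_{\overline{\Omega}}$ equality when $s \geq 0$, the argument is essentially the same, noting that an upward translation of a distribution supported in $\overline{\Omega}$ is supported in the (open) translate, which locally lies in $\Omega$; alternatively one observes that $m(\partial\Omega) = 0$ for $C^0$ domains, so $H^s_{\overline{\Omega}} = \cirHs(\Omega)$ follows once $s \geq 0$ from Lemma~\ref{lem:polarity}\rf{ee} applied to $\partial\Omega$ together with Lemma~\ref{lem:polarity}\rf{kk0}, reducing the whole theorem to the single equality $\tH^s(\Omega) = H^s_{\overline{\Omega}}$.
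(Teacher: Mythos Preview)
Your proposal is correct and follows essentially the same route as the paper, which does not give its own proof but cites McLean \cite[Theorems~3.29, 3.21]{McLean} for the bounded-boundary case and observes that the general $C^0$ case follows by the compact-support reduction \eqref{eq:approx2}. Your outline---truncate via \eqref{eq:approx2}, localise with a finite partition of unity, translate each piece in its own chart direction to push the support into $\Omega$, then mollify---is precisely McLean's argument, and your care about handling the chart-by-chart translation (rather than a single global direction) is exactly the point the paper flags in its footnote and revisits in the proof of Theorem~\ref{thm:new2}.
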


When $\Omega$ is not $C^0$ the situation is more complicated.
We first note the following elementary results concerning the case $s\geq 0$, part \rf{c1} of which makes it clear that Lemma \ref{lem:sob_equiv} does not extend to general open $\Omega$.

\begin{lem}\label{lem:CircleSpace}
Let $\Omega\subset\R^n$ be non-empty and open. Then
\begin{enumerate}[(i)]
\item \label{c1} $\tH^0(\Omega)=\cirHzero(\Omega)$; while
$\cirHzero(\Omega) = H^0_{\overline{\Omega}}$ if and only if $m(\partial \Omega)=0$.
\item \label{d1} For $s\geq 0$, if $\meas(\partial \Omega)=0$ then $\cirHs(\Omega)= H^s_{\overline{\Omega}}$.
\item \label{g1} For $t>s\geq0$, if $\cirHs(\Omega) = H^{s}_{\overline{\Omega}}$ then $\cirHt(\Omega) = H^{t}_{\overline{\Omega}}$.%
\end{enumerate}
\end{lem}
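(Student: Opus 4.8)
Since $s\ge 0$ throughout, every space appearing in the statement is a space of $L^2(\R^n)$ functions (using $H^s(\R^n)\subset H^0(\R^n)=L^2(\R^n)$), so the plan is to unwind the four definitions into pointwise-a.e.\ conditions and then conclude by elementary measure theory together with the embedding $H^t(\R^n)\hookrightarrow H^s(\R^n)$ for $t>s$. The recurring observation I would use is the topological identity $\Omega^c=\partial\Omega\,\cup\,{\rm int}(\Omega^c)$, a disjoint union with ${\rm int}(\Omega^c)=(\overline{\Omega})^c$, together with the fact that, for $u\in L^2(\R^n)$ and closed $F\subset\R^n$, the support condition $\supp u\subset F$ (which defines the spaces $H^s_F$) is equivalent to $u=0$ a.e.\ on $F^c$.

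For \rf{c1}: first, $H^0(\R^n)=L^2(\R^n)$, so $\cirHzero(\Omega)$ is exactly $L^2(\Omega)$ extended by zero, a closed subspace of $L^2(\R^n)$ containing $\scrD(\Omega)$; since extension by zero is an isometry of $L^2(\Omega)$ onto $\cirHzero(\Omega)$ and $\scrD(\Omega)$ is dense in $L^2(\Omega)$, taking $L^2(\R^n)$-closures gives $\tH^0(\Omega)=\cirHzero(\Omega)$. For the ``if and only if'' claim, note that membership of $H^0_{\overline{\Omega}}$ amounts to $u=0$ a.e.\ on $(\overline{\Omega})^c$, whereas membership of $\cirHzero(\Omega)$ in addition requires $u=0$ a.e.\ on $\partial\Omega$. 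If $m(\partial\Omega)=0$ the extra requirement is vacuous and the spaces coincide. If $m(\partial\Omega)>0$, I would use continuity of Lebesgue measure to pick a bounded measurable $E\subset\partial\Omega$ with $0<m(E)<\infty$; then $\chi_E\in L^2(\R^n)$ has support contained in the closed set $\partial\Omega\subset\overline{\Omega}$, so $\chi_E\in H^0_{\overline{\Omega}}$, but $\chi_E$ is non-zero on a positive-measure subset of $\Omega^c$, so $\chi_E\notin\cirHzero(\Omega)$, giving strict inclusion.

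For \rf{d1}: the inclusion $\cirHs(\Omega)\subset H^s_{\overline{\Omega}}$ is \rf{eqn:inclusionsRepeat}; for the reverse, if $u\in H^s_{\overline{\Omega}}$ with $s\ge0$ then $u\in L^2(\R^n)$ with $\supp u\subset\overline{\Omega}$, hence $u=0$ a.e.\ on $(\overline{\Omega})^c$, and since $m(\partial\Omega)=0$ this improves to $u=0$ a.e.\ on $\Omega^c$, i.e.\ $u\in\cirHs(\Omega)$. For \rf{g1}: again only $H^t_{\overline{\Omega}}\subset\cirHt(\Omega)$ needs proof. Given $u\in H^t_{\overline{\Omega}}$, the embedding $H^t(\R^n)\hookrightarrow H^s(\R^n)$ and the fact that the support is unchanged give $u\in H^s_{\overline{\Omega}}=\cirHs(\Omega)$ by hypothesis, so $u=0$ a.e.\ on $\Omega^c$; since $u\in H^t(\R^n)$ this is precisely the statement $u\in\cirHt(\Omega)$.

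There is no genuine obstacle here: the argument is bookkeeping with the definitions. The only points warranting a little care are the a.e.\ characterisation of the support condition for $L^2$ functions (used implicitly in \rf{d1} and \rf{g1} as well) and, in the counterexample for \rf{c1}, extracting a bounded positive-measure piece of a possibly unbounded $\partial\Omega$ so as to land in $L^2(\R^n)$.
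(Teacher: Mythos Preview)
Your proof is correct and follows essentially the same approach as the paper's. The paper's proof is terser---it dispatches the second statement in \rf{c1} and all of \rf{d1} with ``follow straight from the definitions'' and gives the same embedding argument for \rf{g1}---whereas you spell out the measure-theoretic details and, in particular, supply the explicit counterexample $\chi_E$ for the ``only if'' direction of \rf{c1}, which the paper leaves implicit.
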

\begin{proof} %
\rf{c1} The equality $\tH^0(\Omega)=\cirHzero(\Omega)$ holds because the restriction operator is a unitary isomorphism from $\cirHzero(\Omega)$ onto $H^0(\Omega) = L^2(\Omega)$, in particular $\|u\|_{L^2(\R^n)}=\|u|_\Omega\|_{L^2(\Omega)}$ for $u\in \cirHzero(\Omega)$, and because $\scrD(\Omega)$ is dense in $L^2(\Omega)$ \cite[Theorem 2.19]{Adams}.
The second statement in \rf{c1}, and \rf{d1}, follow straight from the definitions.
If the hypothesis of part \rf{g1} is satisfied, then every $u\in H^{t}_{\overline{\Omega}}\subset H^{s}_{\overline{\Omega}}\cap H^{t}(\R^n)=\cirHs(\Omega)\cap H^{t}(\R^n)$ is equal to zero a.e.\ in $\Omega^c$, %
and hence belongs to $\cirHt(\Omega)$.
\end{proof}

Open sets for which
$\Omega\subsetneqq \mathrm{int}(\overline{\Omega})$ are a source of counterexamples to equality in \rf{eqn:inclusionsRepeat}.
The following lemma relates properties of the inclusions \rf{eqn:inclusionsRepeat} to properties of the set $\mathrm{int}(\overline{\Omega})\setminus\Omega$.
\begin{lem}
\label{lem:equalityNullity}
Let $\Omega\subset \R^n$ be non-empty and open, and let $s\in \R$.%
\begin{enumerate}[(i)]
\item \label{a7}
For $s\geq 0$, if $\meas(\mathrm{int}(\overline{\Omega})\setminus \Omega)>0$ then $\cirHs(\Omega) \subsetneqq H^s_{\overline{\Omega}}$.
\item \label{b7}
For $s>n/2$, $\cirHs(\Omega) = H^s_{\overline{\Omega}}$ if and only if $m(\mathrm{int}(\overline{\Omega})\setminus \Omega)=0$.
\item \label{aaa} If
$\IntClosOm\setminus\Omega$ is not $(-s)$-null then $\tH^s(\Omega)\subsetneqq H^s_{\overline\Omega}$.
\item \label{bbb} If $\mathrm{int}(\overline{\Omega})\setminus\Omega$ is not $(-s)$-null, $s>0$, and $m(\mathrm{int}(\overline{\Omega})\setminus\Omega)=0$, then $\tH^s\OO\subsetneqq\cirHs\OO$.
\item \label{ddd}
If $\tH^s(\mathrm{int}(\overline{\Omega})) = H^s_{\overline{\Omega}}$ (e.g.\ if $\mathrm{int}(\overline{\Omega})$ is $C^0$),
then $\tH^s(\Omega) = H^s_{\overline{\Omega}}$ if and only if $\mathrm{int}(\overline{\Omega})\setminus\Omega$ is $(-s)$-null.
\end{enumerate}
\end{lem}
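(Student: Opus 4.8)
The plan is to deduce the statement from Theorem~\ref{thm:Hs_equality_open} together with the supplied hypothesis on $\mathrm{int}(\overline{\Omega})$. First I would record the elementary topological facts: writing $\tilde\Omega := \mathrm{int}(\overline{\Omega})$, this is a non-empty open set (it contains $\Omega$), and from $\Omega\subset\tilde\Omega\subset\overline{\Omega}$ one gets $\overline{\Omega}\subset\overline{\tilde\Omega}\subset\overline{\Omega}$, so $\overline{\tilde\Omega} = \overline{\Omega}$. Also $\Omega\subset\tilde\Omega$ gives $\scrD(\Omega)\subset\scrD(\tilde\Omega)$, hence, taking closures in $H^s(\R^n)$, $\tH^s(\Omega)\subset\tH^s(\tilde\Omega)$.

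By hypothesis $\tH^s(\tilde\Omega) = H^s_{\overline{\Omega}}$ (and if $\tilde\Omega$ is $C^0$ this is automatic: Lemma~\ref{lem:sob_equiv} applied to $\tilde\Omega$ gives $\tH^s(\tilde\Omega) = H^s_{\overline{\tilde\Omega}} = H^s_{\overline{\Omega}}$, using $\overline{\tilde\Omega}=\overline{\Omega}$). Combined with the inclusion $\tH^s(\Omega)\subset\tH^s(\tilde\Omega) = H^s_{\overline{\Omega}}$, this shows that $\tH^s(\Omega) = H^s_{\overline{\Omega}}$ holds if and only if $\tH^s(\Omega) = \tH^s(\tilde\Omega)$.

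It then remains to characterise the latter equality, and for this I would apply Theorem~\ref{thm:Hs_equality_open} to the pair $\Omega_1 := \Omega$, $\Omega_2 := \tilde\Omega$, with its parameter taken to be $-s$ in place of $s$; the equivalence of items \ref{a0} and \ref{d0} there then reads: $\Omega\ominus\tilde\Omega$ is $(-s)$-null $\iff$ $\tH^{s}(\Omega) = \tH^{s}(\tilde\Omega)$. Since $\Omega\subset\tilde\Omega$, the symmetric difference is $\Omega\ominus\tilde\Omega = \tilde\Omega\setminus\Omega = \mathrm{int}(\overline{\Omega})\setminus\Omega$, and chaining the two equivalences completes the proof. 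There is no genuine difficulty here; the one point to watch is the sign of the exponent when invoking Theorem~\ref{thm:Hs_equality_open}, whose item \ref{d0} is phrased with $\tH^{-s}$ rather than $\tH^{s}$.
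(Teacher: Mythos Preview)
Your argument addresses only part~\rf{ddd} of the lemma; parts~\rf{a7}--\rf{bbb} are not treated at all, so as a proof of the full statement there is a gap. (In the paper these are handled separately: \rf{a7} by constructing an explicit bump function supported in a ball $B\subset\mathrm{int}(\overline\Omega)$ with $m(B\setminus\Omega)>0$; \rf{b7} via Sobolev embedding into $C^0(\R^n)$; and \rf{aaa}, \rf{bbb} by the same appeal to Theorem~\ref{thm:Hs_equality_open} that you use, combined with Lemma~\ref{lem:CircEquality} for \rf{bbb}.)

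For part~\rf{ddd} itself your proof is correct and is essentially the paper's own argument: both reduce the question to whether $\tH^s(\Omega)=\tH^s(\mathrm{int}(\overline\Omega))$ via the chain $\tH^s(\Omega)\subset\tH^s(\mathrm{int}(\overline\Omega))=H^s_{\overline\Omega}$, and then invoke Theorem~\ref{thm:Hs_equality_open} with $\Omega_1=\Omega$, $\Omega_2=\mathrm{int}(\overline\Omega)$. Your write-up is in fact slightly more careful than the paper's on two points: you work directly from the stated hypothesis $\tH^s(\mathrm{int}(\overline\Omega))=H^s_{\overline\Omega}$ (the paper's proof cites Lemma~\ref{lem:sob_equiv}, which strictly speaking needs the $C^0$ assumption rather than the more general hypothesis), and you are explicit about the sign swap $s\mapsto -s$ needed when reading off item~\rf{d0} of Theorem~\ref{thm:Hs_equality_open}.
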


\begin{proof}
\rf{a7} If $\meas(\mathrm{int}(\overline{\Omega})\setminus \Omega)>0$ then there exists an open ball $B\subset \mathrm{int}(\overline{\Omega})$ such that $\meas(B\setminus \Omega)=\epsilon>0$.
(To see this first write $\mathrm{int}(\overline{\Omega})$ as the union of balls. Then use the fact that $\R^n$ is a separable metric space, so second countable, so that, by Lindel\"of's theorem (see e.g.\ \cite[p.~100]{Simmons}), $\mathrm{int}(\overline{\Omega})$ can be written as the union of a countable set of balls, i.e., as $\mathrm{int}(\overline{\Omega})=\bigcup_{n=1}^\infty B_n$. Then $0<m(\mathrm{int}(\overline{\Omega})\setminus \Omega)\leq \sum_{n=1}^\infty m(B_n\setminus \Omega)$, so that $m(B_n\setminus \Omega)>0$ for some $n$.) Choose $\chi\in \scrD(B)$ such that $0\leq \chi\leq 1$ and $\int \chi dx > m(B)-\epsilon$. Then $\chi \in \tH^s(\mathrm{int}(\overline{\Omega}))\subset H^s_{\overline{\Omega}}$, but $\chi\not\in \cirHs(\Omega)$, for if $\chi\in \cirHs(\Omega)$ then $\chi=0$ a.e. in $\Omega^c$, so that $\int \chi dx \leq m(B\cap \Omega)\leq m(B)-\epsilon$.
\rf{b7} If $u\in H^s_{\overline{\Omega}}$ then $u=0$ a.e.\ in $\overline\Omega^c$.
Since $s>n/2$, the Sobolev embedding theorem says that $u\in C^0\Rn$, so $u=0$ a.e.\ in $\overline{\overline{\Omega}^c}$.
But $\Omega^c\setminus\overline{\overline{\Omega}^c}=\mathrm{int}(\overline{\Omega})\setminus \Omega$, which has zero measure by assumption.
Thus $u=0$ a.e.\ in $\Omega^c$, so $u\in\cirHs\OO$.
The ``only if'' part of the statement is provided by \rf{a7}.
\rf{aaa} If $\IntClosOm\setminus\Omega$ is not $(-s)$-null then, by Theorem \ref{thm:Hs_equality_open}, $\tH^s(\Omega)\subsetneqq \tH^s(\IntClosOm) \subset H^s_{\overline\Omega}$.
Part \rf{bbb} follows similarly, by noting that $\tH^s\OO\subsetneqq \tH^s(\mathrm{int}(\overline{\Omega}))\subset
\cirHs(\mathrm{int}(\overline{\Omega}))=\cirHs\OO$, the latter equality following from Lemma \ref{lem:CircEquality}.
\rf{ddd} Lemma \ref{lem:sob_equiv} (applied to $\mathrm{int}(\overline{\Omega})$) implies that
$\tH^s\OO\subset\tH^s(\mathrm{int}(\overline{\Omega}))
=H^s_{\overline{\mathrm{int}(\overline{\Omega})}}
=H^s_{\overline\Omega}$,
and the assertion then follows by Theorem~\ref{thm:Hs_equality_open} (with $\Omega_1=\Omega$ and $\Omega_2=\mathrm{int}(\overline{\Omega})$).
\end{proof}

In particular, Lemma~\ref{lem:equalityNullity}\rf{ddd}, combined with Lemmas \ref{lem:sob_equiv} and \ref{lem:polarity}, provides results about the case where $\Omega$ is an $C^0$ open set from which a closed, nowhere dense set has been removed. A selection of such results is given in the following proposition.
\begin{prop}
\label{prop:TildeSubscript}
Suppose that $\Omega\subsetneqq \mathrm{int}(\overline{\Omega})$ and that $\mathrm{int}(\overline{\Omega})$ is  $C^0$. Then:
\begin{enumerate}[(i)]

\item \label{ts0} $\tH^s(\Omega)= H^s_{\overline\Omega}$ for all $s<-n/2$.

\item \label{ts1}
If $\mathrm{int}(\overline{\Omega})\setminus\Omega$ is a subset of the boundary of a Lipschitz open set $\Upsilon$, with $\mathrm{int}(\overline{\Omega})\setminus\Omega$ having non-empty relative interior in $\partial\Upsilon$, then $\tH^s(\Omega) = H^s_{\overline{\Omega}}$ if and only if $s\leq 1/2$.
(A concrete example in one dimension is where $\Omega$ is an open interval with an interior point removed.
An example in two dimensions is where $\Omega$ is an open disc with a slit cut out.
Three-dimensional examples relevant for computational electromagnetism are the ``pseudo-Lipschitz domains'' of \cite[Definition~3.1]{ABD98}.)

\item If $0<d:=\dimH(\mathrm{int}(\overline{\Omega})\setminus\Omega)<n$ then $\tH^s(\Omega) = H^s_{\overline{\Omega}}$ for all $s<(n-d)/2$ and $\tH^s(\Omega) \subsetneqq H^s_{\overline{\Omega}}$ for all $s>(n-d)/2$.

\item If $\mathrm{int}(\overline{\Omega})\setminus\Omega$ is countable then $\tH^s(\Omega) = H^s_{\overline{\Omega}}$ if and only if $s\leq n/2$.

\item If $\tH^t(\Omega)=H^t_{\overline\Omega}$ for some $t\in\R$ then $\tH^s(\Omega)=H^s_{\overline\Omega}$ for all $s<t$. %
(Whether the assumption that $\mathrm{int}(\overline\Omega)$ is $C^0$ is necessary here appears to be an open question.
Lemma \ref{lem:CircleSpace}\rf{g1} shows that if $\tH$ is replaced by $\cirH$ the opposite result holds (without assumptions on $\mathrm{int}(\overline\Omega)$)).
\end{enumerate}
\end{prop}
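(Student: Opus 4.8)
The plan is to deduce the claim from the characterisation in Lemma~\ref{lem:equalityNullity}\rf{ddd} together with the monotonicity of $s$-nullity in its index. First I would note that the hypothesis of Lemma~\ref{lem:equalityNullity}\rf{ddd} holds here for \emph{every} $s\in\R$: since $\IntClosOm$ is $C^0$, Lemma~\ref{lem:sob_equiv} applied to $\IntClosOm$ gives $\tH^s(\IntClosOm)=H^s_{\overline{\IntClosOm}}$, and $\overline{\IntClosOm}=\overline{\Omega}$ because $\Omega\subset\IntClosOm\subset\overline{\Omega}$. Consequently, by Lemma~\ref{lem:equalityNullity}\rf{ddd}, for each $s\in\R$ we have the equivalence
\begin{equation*}
\tH^s(\Omega)=H^s_{\overline{\Omega}} \quad\Longleftrightarrow\quad \IntClosOm\setminus\Omega \text{ is } (-s)\text{-null}.
\end{equation*}

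Next I would use this equivalence twice. Applying it with index $t$, the assumption $\tH^t(\Omega)=H^t_{\overline{\Omega}}$ says precisely that $\IntClosOm\setminus\Omega$ is $(-t)$-null. Now fix $s<t$, so that $-s>-t$; by Lemma~\ref{lem:polarity}\rf{bb} (a set that is $r$-null is also $r'$-null for every $r'>r$), the set $\IntClosOm\setminus\Omega$ is then $(-s)$-null. Applying the equivalence again, now with index $s$, yields $\tH^s(\Omega)=H^s_{\overline{\Omega}}$, as required.

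I do not expect any genuine obstacle: the result is a purely formal corollary of Lemma~\ref{lem:equalityNullity}\rf{ddd} and Lemma~\ref{lem:polarity}\rf{bb}, the only thing to verify being the elementary identity $\overline{\IntClosOm}=\overline{\Omega}$ used above (valid for any open $\Omega$). The parenthetical assertions in the statement are not part of what needs to be proved; in particular the contrasting behaviour for the $\cirH$ spaces is exactly Lemma~\ref{lem:CircleSpace}\rf{g1}, and whether the $C^0$ assumption on $\IntClosOm$ can be dropped amounts to asking whether Lemma~\ref{lem:equalityNullity}\rf{ddd} admits a version valid for general $\IntClosOm$, which the excerpt explicitly flags as open.
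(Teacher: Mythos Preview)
Your proposal addresses only part~\rf{ts0}--actually, only part~(v)--of the proposition, but for that part it is correct and is exactly the argument the paper has in mind: the paper states (in the sentence immediately preceding the proposition) that all five parts are consequences of Lemma~\ref{lem:equalityNullity}\rf{ddd} combined with Lemmas~\ref{lem:sob_equiv} and~\ref{lem:polarity}, without writing out details. Your reduction to the equivalence ``$\tH^s(\Omega)=H^s_{\overline\Omega}$ iff $\IntClosOm\setminus\Omega$ is $(-s)$-null'' via Lemma~\ref{lem:sob_equiv} and Lemma~\ref{lem:equalityNullity}\rf{ddd}, followed by the monotonicity Lemma~\ref{lem:polarity}\rf{bb}, is precisely this route.

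If you are expected to cover the remaining parts, they fall out of the same equivalence by invoking the corresponding items of Lemma~\ref{lem:polarity}: part~(i) from~\rf{dd} (any set with empty interior is $(-s)$-null when $-s>n/2$); part~(ii) from~\rf{kk} together with~\rf{aa} and~\rf{kk1}; part~(iii) from~\rf{hh} and~\rf{gg}; part~(iv) from~\rf{ii}. No new ideas are needed beyond what you have already set up.
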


Parts \rf{aaa} and \rf{bbb} of Lemma \ref{lem:equalityNullity}, combined with Lemma \ref{lem:CircleSpace}, provide a way of constructing bounded open sets for which all the spaces considered in this section are different from each other for $s\geq-n/2$. (Note that the statement of Lemma \ref{lem:equalityNullity}\rf{aaa} is empty if $s<-n/2$ as $\IntClosOm\setminus\Omega$ is necessarily $(-s)$-null in this case (cf.\ Lemma \ref{lem:polarity}\rf{dd}).
One might speculate that if $s<-n/2$ then $\tH^s(\Omega)= H^s_{\overline\Omega}$ for every open $\Omega\subset\R^n$, not just when $\mathrm{int}(\overline\Omega)$ is~$C^0$ (see Proposition \ref{prop:TildeSubscript}\rf{ts0} above). But proving this in the general case is an open problem.
\begin{thm}\label{thm:notequalbig}
For every $n\in\N$, there exists a bounded open set $\Omega\subset \R^n$ such that,
for every $s>0$, $\tH^s(\Omega) \subsetneqq \cirHs(\Omega) \subsetneqq H^s_{\overline{\Omega}}$,
and
for every $s\geq-n/2$, $\tH^s(\Omega) \subsetneqq H^s_{\overline{\Omega}}$.
\end{thm}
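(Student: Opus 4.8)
The plan is to build $\Omega$ by deleting from an open ball two disjoint compact nowhere-dense sets with complementary properties: a ``fat'' set $K$ of positive Lebesgue measure, responsible for the strict inclusions involving $H^s_{\overline\Omega}$, and a ``thin'' set $L$ of zero measure but full Hausdorff dimension $n$, responsible for $\tH^s(\Omega)\subsetneqq\cirHs(\Omega)$. Concretely, take $\Omega_0:=B_1$; by Lemma~\ref{lem:polarity}\rf{mm} there is a compact $K\subset\Omega_0$ with $\mathrm{int}(K)=\emptyset$, $m(K)>0$, and $K$ not $t$-null for any $t\le n/2$; and one can choose (e.g.\ as an $n$-fold product of a suitable Cantor subset of $\R$; cf.\ \cite{HewMoi:15}) a compact $L$ with $m(L)=0$ and $\dimH(L)=n$, lying inside the nonempty open set $\Omega_0\setminus K$, so that $K\cap L=\emptyset$. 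Set $\Omega:=\Omega_0\setminus(K\cup L)$. Since $K\cup L$ is compact and nowhere dense, $\Omega$ is dense in $\Omega_0$, whence $\overline\Omega=\overline{\Omega_0}$, $\IntClosOm=\Omega_0$, and $\IntClosOm\setminus\Omega=K\cup L$.

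The two statements involving $H^s_{\overline\Omega}$ are then immediate. Since $m(\IntClosOm\setminus\Omega)=m(K)>0$, Lemma~\ref{lem:equalityNullity}\rf{a7} gives $\cirHs(\Omega)\subsetneqq H^s_{\overline\Omega}$ for all $s\ge0$. Since $K\subseteq K\cup L$ and $K$ is not $t$-null for any $t\le n/2$ --- equivalently, not $(-s)$-null for any $s\ge-n/2$ --- Lemma~\ref{lem:polarity}\rf{aa} shows $\IntClosOm\setminus\Omega$ is not $(-s)$-null for any $s\ge-n/2$, and Lemma~\ref{lem:equalityNullity}\rf{aaa} then gives $\tH^s(\Omega)\subsetneqq H^s_{\overline\Omega}$ for all $s\ge-n/2$.

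It remains to prove $\tH^s(\Omega)\subsetneqq\cirHs(\Omega)$ for every $s>0$, and this is the step where the full-dimensionality of $L$ is crucial. Let $\Omega':=\Omega_0\setminus K$, an open set containing $\Omega$, so that $\tH^s(\Omega)\subseteq\tH^s(\Omega')$ and, by \rf{eqn:inclusions}, $\tH^s(\Omega')\subseteq\cirHs(\Omega')$. Using $K\cap L=\emptyset$ one has $\Omega\ominus\Omega'=\Omega'\setminus\Omega=L$; since $\dimH(L)=n$, $L$ is not $(-s)$-null for any $s>0$ (by Lemma~\ref{lem:polarity}\rf{gg}, as a $(-s)$-null Borel set would have Hausdorff dimension $\le n-2s<n$, when $0<s\le n/2$; and by Lemma~\ref{lem:polarity}\rf{jj} when $s>n/2$). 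Hence Theorem~\ref{thm:Hs_equality_open}, applied with $\Omega_1=\Omega$ and $\Omega_2=\Omega'$, yields $\tH^s(\Omega)\neq\tH^s(\Omega')$, i.e.\ $\tH^s(\Omega)\subsetneqq\tH^s(\Omega')$. Finally $\cirHs(\Omega')=\cirHs(\Omega)$ by Lemma~\ref{lem:CircEquality}, since $m(\Omega\ominus\Omega')=m(L)=0$. Chaining, $\tH^s(\Omega)\subsetneqq\tH^s(\Omega')\subseteq\cirHs(\Omega')=\cirHs(\Omega)$, which completes the proof.

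The main obstacle is the construction of $L$: we need a single compact set that is at once Lebesgue-null, so that deleting it does not change any $\cirHs$ space (Lemma~\ref{lem:CircEquality}), and of maximal Hausdorff dimension $n$, so that it fails to be $(-s)$-null for \emph{every} $s>0$ (Lemma~\ref{lem:polarity}\rf{gg}) rather than only for $s$ beyond some threshold. A set of dimension $d<n$ would only deliver $\tH^s(\Omega)\subsetneqq\cirHs(\Omega)$ for $s>(n-d)/2$, which is why the dimension must be pushed up to $n$. Once $K$ and $L$ are in hand, the remainder is routine bookkeeping with the $s$-nullity results already established.
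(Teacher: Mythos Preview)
Your proof is correct, and the construction is genuinely different from the paper's. The paper builds $\Omega$ as a disjoint union $\Omega_1\cup\Omega_2$ of two bounded open sets with disjoint closures: $\Omega_1$ is a ball minus a fat compact set (giving $\cirHs(\Omega_1)\subsetneqq H^s_{\overline{\Omega_1}}$ and $\tH^s(\Omega_1)\subsetneqq H^s_{\overline{\Omega_1}}$ via Lemma~\ref{lem:equalityNullity}\rf{a7},\rf{aaa}), and $\Omega_2$ is a ball minus a measure-zero set that is not $s$-null for any $s<0$ (giving $\tH^s(\Omega_2)\subsetneqq\cirHs(\Omega_2)$ directly from Lemma~\ref{lem:equalityNullity}\rf{bbb}, whose hypothesis $m(\IntClosOmTwo\setminus\Omega_2)=0$ is satisfied). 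The disjointness of closures then transfers each strict inclusion to the union.

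You instead remove \emph{both} compact sets $K$ and $L$ from a single ball. The price is that Lemma~\ref{lem:equalityNullity}\rf{bbb} no longer applies directly to your $\Omega$, since $m(\IntClosOm\setminus\Omega)=m(K)>0$; you compensate with the intermediate set $\Omega'=\Omega_0\setminus K$, using Theorem~\ref{thm:Hs_equality_open} to get $\tH^s(\Omega)\subsetneqq\tH^s(\Omega')$ and Lemma~\ref{lem:CircEquality} to get $\cirHs(\Omega')=\cirHs(\Omega)$. This is essentially the mechanism behind the proof of Lemma~\ref{lem:equalityNullity}\rf{bbb}, but with $\Omega'$ playing the role that $\IntClosOm$ plays there. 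What your route buys is a single-ball example with no need to argue that the relevant Sobolev spaces decompose over components with disjoint closures; what the paper's route buys is modularity, with each strict inclusion handled by a separate piece and Lemma~\ref{lem:equalityNullity}\rf{bbb} applied off the shelf.
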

\begin{proof}
Let $\Omega_1$ be any bounded open set for which $\mathrm{int}(\overline\Omega_1)\setminus\Omega_1$ has positive measure and is not $n/2$-null, for example an open ball minus a compact set of the type considered in Lemma \ref{lem:polarity}\rf{mm}. Let $\Omega_2$ be any bounded open set for which $\mathrm{int}(\overline\Omega_2)\setminus\Omega_2$ has zero measure and is not $s$-null for any $s<0$, for example an open ball minus the Cantor set $F^{(n)}_{n,\infty}$ from \cite[Theorem 4.5]{HewMoi:15}.
Then, by Lemmas~\ref{lem:CircleSpace} and \ref{lem:equalityNullity},
\begin{align*}
\tH^s(\Omega_1)&\subsetneqq  H^s_{\overline{\Omega_1}}, && \mbox{ for all }s\geq-n/2,\nonumber\\
\cirHs(\Omega_1)&\subsetneqq  H^s_{\overline{\Omega_1}}, && \mbox{ for all }s\ge0,
\\
\tH^s(\Omega_2) &\subsetneqq \cirHs(\Omega_2) , && \mbox{ for all }s>0.\nonumber
\end{align*}
 Provided $\Omega_1$ and $\Omega_2$ have disjoint closure (this can always be achieved by applying a suitable translation if necessary) the open set $\Omega:=\Omega_1\cup\Omega_2$ has the properties claimed in the assertion.
\end{proof}

For bounded open sets with $\Omega=\mathrm{int}(\overline{\Omega})$, the equality $\tH^s(\Omega)=H^s_{\overline\Omega}$ is equivalent to $\overline\Omega$ being ``$(s,2)$-stable'', in the sense of \cite[Definition 11.5.2]{AdHe} and \cite[Definition 3.1]{BaCa:01}.
(We note that the space $L^{s,2}_0(E)$ appearing in \cite[Definition 11.5.2]{AdHe} is equal to $\tH^s(E)$ when $E$ is open (see \cite[Equation (11.5.2)]{AdHe}), and equal to $H^s_E$ when $E$ is compact (see \cite[\S10.1]{AdHe}).)
Then, results in \cite[\S11]{AdHe} -- specifically, the remark after Theorem~11.5.3,
Theorem~11.5.5 (noting that the compact set $K$ constructed therein satisfies $K=\overline{\mathrm{int}(K)}$) and Theorem~11.5.6 -- provide the following results, which show that, at least for $m\in\N$, $\Omega=\mathrm{int}(\overline{\Omega})$ is not a sufficient condition for $\tH^m(\Omega)=H^m_{\overline\Omega}$ unless $n=1$.
Part \rf{AdHe1} of Lemma \ref{lem:stability} also appears in \cite[Theorem 7.1]{BaCa:01}. We point out that references \cite{AdHe} and \cite{BaCa:01} also collect a number of technical results from the literature, not repeated here, relating $(s,2)$-stability to certain ``polynomial'' set capacities (e.g.\ \cite[Theorem 11.5.10]{AdHe} and \cite[Theorem 7.6]{BaCa:01}) %
and spectral properties of partial differential operators (e.g.\ \cite[Theorem 6.6]{BaCa:01}).
\begin{lem}[{\cite[$\S$11]{AdHe}}]
\label{lem:stability}
\begin{enumerate}[(i)]
\item \label{AdHe1} If $n=1$ and $\Omega\subset \R$ is open, bounded and satisfies $\Omega=\mathrm{int}(\overline{\Omega})$, then $\tH^m(\Omega)=H^m_{\overline\Omega}$ for all $m\in\N$.
\item \label{AdHe2} If $n\geq 2$ and $m\in\N$, there exists a bounded open set $\Omega\subset \R^n$ for which $\Omega=\mathrm{int}(\overline{\Omega})$ but $\tH^m(\Omega)\neq H^m_{\overline\Omega}$.
\item \label{AdHe3} If $n\geq 3$ then the set $\Omega$ in point \rf{AdHe2} can be chosen so that $\overline{\Omega}^c$ is connected.
\end{enumerate}
\end{lem}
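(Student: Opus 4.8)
The plan is to reduce all three parts to statements about $(m,2)$-stability of compact sets and then either argue directly (for $n=1$) or quote the relevant constructions from \cite[\S11]{AdHe} (for $n\ge2$ and $n\ge3$), using throughout the dictionary recorded in the paragraph preceding the lemma: for a compact set $E$ one has $H^m_E=L^{m,2}_0(E)$, for an open set $E$ one has $\tH^m(E)=L^{m,2}_0(E)$, and hence for bounded open $\Omega$ with $\Omega=\mathrm{int}(\overline{\Omega})$ the equality $\tH^m(\Omega)=H^m_{\overline{\Omega}}$ is precisely the assertion that the compact set $\overline{\Omega}$ (which then satisfies $\overline{\Omega}=\overline{\mathrm{int}(\overline{\Omega})}$) is $(m,2)$-stable in the sense of \cite[Definition~11.5.2]{AdHe}.

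For part \rf{AdHe1} I would bypass potential theory entirely. If $\Omega\subset\R$ is open and $\Omega=\mathrm{int}(\overline{\Omega})$, then the connected components of $\Omega$ (which are open intervals, and at most countably many) have pairwise disjoint closures: if some point $b$ lay in the closures of two distinct components, then necessarily $b$ is the right endpoint of one and the left endpoint of the other, whence a full neighbourhood of $b$ is covered by these two intervals together with $\{b\}$, so $b\in\mathrm{int}(\overline{\Omega})$; but an endpoint of a component cannot lie in any component, so $b\notin\Omega$, contradicting $\Omega=\mathrm{int}(\overline{\Omega})$. Thus $\Omega$ is a countable union of open intervals with pairwise disjoint closures, i.e.\ $\Omega$ is $C^0$ in the sense used in this paper, and Lemma~\ref{lem:sob_equiv} gives $\tH^s(\Omega)=H^s_{\overline{\Omega}}$ for every $s\in\R$, in particular for $s=m\in\N$. (Boundedness is not actually needed; alternatively one may simply invoke the remark after \cite[Theorem~11.5.3]{AdHe}.)

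For parts \rf{AdHe2} and \rf{AdHe3} the substantive input is the existence of compact sets in \cite[Theorems~11.5.5 and~11.5.6]{AdHe}: for $n\ge2$ (respectively $n\ge3$, with connected complement) there is a compact $K\subset\R^n$ with $K=\overline{\mathrm{int}(K)}$ that is \emph{not} $(m,2)$-stable. Set $\Omega:=\mathrm{int}(K)$. Then $\Omega$ is bounded and open, $\overline{\Omega}=\overline{\mathrm{int}(K)}=K$, so $\mathrm{int}(\overline{\Omega})=\mathrm{int}(K)=\Omega$; and for $n\ge3$ one has $\overline{\Omega}^c=K^c$ connected. By the dictionary above, failure of $(m,2)$-stability of $K=\overline{\Omega}$ is exactly the statement $\tH^m(\Omega)\subsetneqq H^m_{\overline{\Omega}}$ (the inclusion $\tH^m\subset H^m_{\overline{\Omega}}$ always holds), which is \rf{AdHe2}, and the same $\Omega$ serves for \rf{AdHe3}.

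The main obstacle lies entirely on the side of \cite{AdHe}: constructing a compact set $K$ with dense interior whose ``thin part'' $K\setminus\mathrm{int}(K)$ nonetheless supports a nonzero element of $H^m(\R^n)$ not approximable in $H^m(\R^n)$ by test functions supported in $\mathrm{int}(K)$ — and, for $n\ge3$, doing so while keeping $K^c$ connected — is delicate, but it is carried out in \cite{AdHe} and we simply cite it. On our side the only point requiring care is to confirm that the identifications $L^{m,2}_0(\mathrm{int}(K))=\tH^m(\mathrm{int}(K))$ and $L^{m,2}_0(K)=H^m_K$ (from \cite[Eq.~(11.5.2), \S10.1]{AdHe}) hold with our normalisations, so that Adams--Hedberg's $(m,2)$-stability coincides with the equality of spaces under discussion; this is precisely the content of the paragraph immediately preceding the lemma.
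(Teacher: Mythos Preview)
Your proposal is correct. For parts \rf{AdHe2} and \rf{AdHe3} it coincides with the paper's own treatment: the paper does not prove the lemma but simply attributes it to \cite[\S11]{AdHe}, invoking Theorems~11.5.5 and~11.5.6 and the equivalence with $(m,2)$-stability exactly as you do.

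For part \rf{AdHe1} your route differs from the paper's. The paper merely cites the remark after \cite[Theorem~11.5.3]{AdHe} (and also \cite[Theorem~7.1]{BaCa:01}), whereas you give a self-contained argument: the hypothesis $\Omega=\mathrm{int}(\overline{\Omega})$ forces the component intervals of $\Omega$ to have pairwise disjoint closures, so $\Omega$ is $C^0$ in the one-dimensional sense adopted in the paper, and Lemma~\ref{lem:sob_equiv} then yields $\tH^s(\Omega)=H^s_{\overline{\Omega}}$ for every $s\in\R$, not just $s=m\in\N$. This is more elementary, stays entirely within the paper's framework, and in fact gives a stronger conclusion (all real $s$, and boundedness of $\Omega$ is not used). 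The only thing the paper's citation buys is brevity.
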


We now consider the following question: if $\Omega$ is the disjoint union of finitely many open sets $\{\Omega_\ell\}_{\ell=1}^L$, each of which satisfies $\tH^s(\Omega_\ell) = H^s_{\overline{\Omega_\ell}}$, then is $\tH^s(\Omega) = H^s_{\overline{\Omega}}$?
Certainly this will be the case when the closures of the constituent sets are mutually disjoint. But what about the general case when the closures intersect nontrivially?
A first answer, valid for a narrow range of regularity exponents, is given by the following lemma, which is a simple consequence of a standard result on pointwise Sobolev multipliers.
\begin{lem}\label{lem:TildeSubscriptHalf}
Let $\Omega\subset\R^n$ be the disjoint union of finitely many bounded Lipschitz open sets $\Omega_1,\ldots,\Omega_L$.
Then $\tH^s(\Omega) = H^s_{\overline{\Omega}}$ for $0\leq s<1/2$.
\end{lem}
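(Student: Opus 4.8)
The plan is to establish the non-trivial inclusion $H^s_{\overline\Omega}\subset\tH^s(\Omega)$, the reverse one being contained in \eqref{eqn:inclusionsRepeat}. The idea is to split an arbitrary $u\in H^s_{\overline\Omega}$ into $L$ pieces supported in the individual closures $\overline{\Omega_\ell}$, using the (necessarily non-smooth) characteristic functions $\chi_{\Omega_\ell}$ as cut-offs. First I would record that $\partial\Omega\subset\bigcup_{\ell=1}^L\partial\Omega_\ell$ is a finite union of Lipschitz boundaries and hence has zero Lebesgue measure, so that, $\Omega$ being open, $m\big(\overline\Omega\setminus\Omega\big)=m(\partial\Omega)=0$. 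Given $u\in H^s_{\overline\Omega}$, since $s\ge0$ we have $u\in L^2(\R^n)$ with $\supp u\subset\overline\Omega$; therefore $u=0$ a.e.\ on $\R^n\setminus\Omega$, i.e.\ $\chi_\Omega u=u$ as elements of $L^2(\R^n)$, and hence of $H^s(\R^n)$. Since the $\Omega_\ell$ are pairwise disjoint, $\chi_\Omega=\sum_{\ell=1}^L\chi_{\Omega_\ell}$ pointwise a.e., which yields the decomposition $u=\sum_{\ell=1}^L u_\ell$, where $u_\ell:=\chi_{\Omega_\ell}u$.

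The crux is then to show $u_\ell\in\tH^s(\Omega)$ for each $\ell$. Here I would invoke the standard result on pointwise Sobolev multipliers that the characteristic function of a bounded Lipschitz open set is a pointwise multiplier on $H^t(\R^n)$ for $|t|<1/2$; applied with $t=s$ this gives $u_\ell=\chi_{\Omega_\ell}u\in H^s(\R^n)$. Since $u_\ell$ vanishes a.e.\ on the open set $(\overline{\Omega_\ell})^c$, its distributional support lies in $\overline{\Omega_\ell}$, so $u_\ell\in H^s_{\overline{\Omega_\ell}}$. As $\Omega_\ell$ is Lipschitz, Lemma~\ref{lem:sob_equiv} gives $H^s_{\overline{\Omega_\ell}}=\tH^s(\Omega_\ell)$; and since $\Omega_\ell\subset\Omega$ implies $\scrD(\Omega_\ell)\subset\scrD(\Omega)$, we get $\tH^s(\Omega_\ell)\subset\tH^s(\Omega)$. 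Hence each $u_\ell\in\tH^s(\Omega)$, and summing gives $u\in\tH^s(\Omega)$, as required.

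The only substantive ingredient is the multiplier property of $\chi_{\Omega_\ell}$, and this is precisely what confines the argument to $s<1/2$: a smooth partition of unity cannot replace the $\chi_{\Omega_\ell}$ when the closures $\overline{\Omega_\ell}$ share boundary points --- which is the configuration the lemma is designed for --- so a discontinuous cut-off is unavoidable, and such a cut-off ceases to preserve $H^s(\R^n)$ once $s\ge1/2$. This restriction is essentially sharp: already for $\Omega=(0,1)\cup(1,2)\subset\R$ one has $\tH^s(\Omega)\subsetneqq H^s_{\overline\Omega}$ for all $s>1/2$ (cf.\ Proposition~\ref{prop:TildeSubscript}\rf{ts1}). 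The remaining work --- the measure-zero and a.e.\ identities and the support computations --- is routine, so once the multiplier result is cited the write-up should be short.
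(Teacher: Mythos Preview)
Your proposal is correct and follows essentially the same argument as the paper: decompose $u$ via the characteristic functions $\chi_{\Omega_\ell}$, invoke the pointwise multiplier property of $\chi_{\Omega_\ell}$ on $H^s(\R^n)$ for $0\le s<1/2$ (the paper cites \cite[Proposition~5.3]{Tri:02}), then use Lemma~\ref{lem:sob_equiv} to identify $H^s_{\overline{\Omega_\ell}}=\tH^s(\Omega_\ell)$, and conclude by summing. Your write-up is a bit more explicit about why $\sum_\ell\chi_{\Omega_\ell}u=u$ (via $m(\partial\Omega)=0$), but the substance is identical.
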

\begin{proof}
Let $0\leq s<1/2$ and $u\in H^s_{\overline{\Omega}}$.
By \cite[Proposition~5.3]{Tri:02} and Lemma~\ref{lem:sob_equiv}, where $\chi_{\Omega_\ell}$ is
the characteristic function of $\Omega_\ell$, $u\chi_{\Omega_\ell}\in H^s_{\overline{\Omega_\ell}}=\tH^s(\Omega_\ell)\subset \tH^s(\Omega)$. Thus $\sum_{\ell=1}^L u\chi_{\Omega_\ell}\in\tH^s(\Omega)$, and $\sum_{\ell=1}^L u\chi_{\Omega_\ell}=u$ since $m(\partial \Omega)\leq \sum_{\ell=1}^L m(\partial \Omega_\ell) = 0$.
\end{proof}
Lemma~\ref{lem:TildeSubscriptHalf} can be extended to disjoint unions of some classes of non-Lipschitz open sets using \cite[Definition 4.2, Theorem~4.4]{Sickel99a}, leading to the equality $\tH^s(\Omega) = H^s_{\overline{\Omega}}$ for $0\leq s<t/2$ for some $0<t<1$ related to the boundary regularity (cf.\ also \cite[Theorem 6]{Sickel99b} and \cite[Theorem 3, p.\ 216]{RunstSickel}). However, %
the technique of Lemma~\ref{lem:TildeSubscriptHalf}, namely using characteristic functions as pointwise multipliers, cannot be extended to $s\geq 1/2$, no matter how regular the constituent sets are;
indeed, \cite[Lemma~3.2]{Sickel99a} states that $\chi_\Omega\notin H^{1/2}\Rn$ for any non-empty open set $\Omega\subset\R^n$.

We now state and prove a general result, which allows us to prove $\tH^s\OO=H^s_{\overline\Omega}$, for $|s|\leq 1$ if $n\geq 2$, $|s|\leq 1/2$ if $n=1$, for a class of open sets which are in a certain sense ``regular except at a countable number of points''. This result depends on the following lemma that is inspired by results in \cite[\S17]{Tartar}, whose proof we defer to later in this section.

\begin{lem} \label{lem:vj} Suppose that $n\geq 2$, that $N\in \N$ and $\bx_1,...,\bx_N\in \R^n$ are distinct, and that
\begin{equation} \label{eq:Rbound}
0<R<\min_{i,j\in \{1,...,N\}}\frac{|\bx_i-\bx_j|}{6}.
\end{equation}
Then there exists a family $(v_j)_{j\in\N}\subset C^\infty(\R^n)$ and a constant $C>0$ such that, for all $j\in \N$: (i) $0\leq v_j(\bx)\leq 1$, for $\bx\in \R^n$;
(ii) $v_j(\bx) = 0$, if $|\bx-\bx_i|< R/(2j)$ for some $i\in \{1,...,N\}$; (iii) $v_j(\bx)=1$,  if $|\bx-\bx_i|> 5R/2$ for all $i\in \{1,...,N\}$; (iv) $\|v_j\phi\|_{H^s(\R^n)} \leq C\|\phi\|_{H^s(\R^n)}$, for all $\phi\in H^s(\R^n)$ with $|s|\leq 1$; (v) $\|v_j\phi-\phi\|_{H^s(\R^n)} \to 0$ as $j\to\infty$, for all $\phi\in H^s(\R^n)$ with $|s|\leq 1$. For $n=1$ the same result holds, but with $s$ restricted to $|s|\leq 1/2$.
\end{lem}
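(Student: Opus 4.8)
The plan is to take the $v_j$ to be smoothed logarithmic cut-offs centred at the $\bx_i$, to prove the uniform multiplier estimate (iv) first at the endpoint exponents and to fill in the remaining range by interpolation and duality, and then to deduce (v) by approximation. For the construction, fix a non-decreasing $\Psi\in C^\infty(\R)$ with $\Psi\equiv 0$ on $(-\infty,1/4]$ and $\Psi\equiv 1$ on $[3/4,\infty)$. By \eqref{eq:Rbound} the balls $B_{5R/2}(\bx_i)$, $i=1,\dots,N$, are pairwise disjoint, so we may unambiguously define $v_j$ to equal $0$ on $\bigcup_i\overline{B_{R/(2j)}(\bx_i)}$, to equal $1$ outside $\bigcup_i B_{5R/2}(\bx_i)$, and to equal $\Psi\big(\log(2j|\bx-\bx_i|/R)/\log(5j)\big)$ on each annulus $B_{5R/2}(\bx_i)\setminus\overline{B_{R/(2j)}(\bx_i)}$. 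Properties (i)--(iii) are then immediate, and $v_j\in C^\infty(\R^n)$ because $\Psi$ is flat near $0$ and $1$. The two facts used repeatedly are that on the $i$-th annulus $|\nabla v_j(\bx)|\le\|\Psi'\|_\infty\big(|\bx-\bx_i|\log(5j)\big)^{-1}$ (with $\nabla v_j\equiv 0$ off the annuli), so that $\|\nabla v_j\|_{L^2(\R^n)}\to 0$ as $j\to\infty$ for every $n\ge 2$, and that $v_j(\bx)\to 1$ as $j\to\infty$ for every $\bx\notin\{\bx_1,\dots,\bx_N\}$.

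\emph{Proof of (iv).} The case $s=0$ is trivial since $0\le v_j\le 1$. Writing $M_{v_j}\colon\phi\mapsto v_j\phi$, we note that $M_{v_j}$ is self-adjoint for the $H^{-s}(\R^n)$--$H^s(\R^n)$ duality pairing (because $v_j$ is real-valued), so $\|M_{v_j}\|_{H^{-s}\to H^{-s}}=\|M_{v_j}\|_{H^s\to H^s}$; and by interpolation ($[H^0(\R^n),H^1(\R^n)]_\theta=H^\theta(\R^n)$ with equivalent norms) it suffices to establish a bound uniform in $j$ at $s=1$ when $n\ge 2$ and at $s=1/2$ when $n=1$ (finitely many small $j$ being harmless). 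For $s=1$, $n\ge2$, one has $\|v_j\phi\|_{H^1(\R^n)}^2\le\|\phi\|_{L^2}^2+2\|\nabla\phi\|_{L^2}^2+2\|\phi\,\nabla v_j\|_{L^2}^2$, and by the gradient bound above
\[
\|\phi\,\nabla v_j\|_{L^2(\R^n)}^2\le\frac{\|\Psi'\|_\infty^2}{(\log 5j)^2}\sum_{i=1}^N\int_{B_{5R/2}(\bx_i)}\frac{|\phi(\bx)|^2}{|\bx-\bx_i|^2}\,\rd\bx .
\]
For $n\ge 3$ the classical Hardy inequality bounds the right-hand side by $C(\log 5j)^{-2}\|\nabla\phi\|_{L^2}^2$. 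For $n=2$, where Hardy fails at the singularity, set $\kappa:=5R/2$; on the annulus $R/(2j)<|\bx-\bx_i|<\kappa$ one has $\log(e\kappa/|\bx-\bx_i|)\le 2\log(5j)$, and combining this with the logarithmic Hardy inequality $\int_{B_\kappa(\bx_i)}|\phi(\bx)|^2\big(|\bx-\bx_i|\log(e\kappa/|\bx-\bx_i|)\big)^{-2}\,\rd\bx\le C\|\phi\|_{H^1(\R^2)}^2$ yields $\|\phi\,\nabla v_j\|_{L^2}^2\le C\|\phi\|_{H^1(\R^2)}^2$. The case $n=1$, $s=1/2$ is handled in the same spirit: using the Gagliardo seminorm and the splitting $v_j(x)\phi(x)-v_j(y)\phi(y)=v_j(x)(\phi(x)-\phi(y))+\phi(y)(v_j(x)-v_j(y))$ reduces the problem to $\int_\R|\phi(y)|^2 G_j(y)\,\rd y\le C\|\phi\|_{H^{1/2}(\R)}^2$ with $G_j(y):=\int_\R|v_j(x)-v_j(y)|^2|x-y|^{-2}\,\rd x$, which follows from the estimate $G_j(y)\le C|y-\bx_i|^{-1}(\log 5j)^{-2}$ for $y\in B_\kappa(\bx_i)$ together with the critical (logarithmic) Hardy inequality for $H^{1/2}(\R)$.

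\emph{Proof of (v).} Since $\scrD(\R^n)$ is dense in $H^s(\R^n)$ for every $s\in\R$, it suffices by (iv) and a standard three-term estimate to check that $\|v_j\psi-\psi\|_{H^s(\R^n)}\to 0$ for each $\psi\in\scrD(\R^n)$. For such $\psi$, $v_j\psi-\psi=(v_j-1)\psi\to 0$ in $L^2(\R^n)$ by dominated convergence (as $v_j\to 1$ a.e., $|v_j-1|\le 1$, and $\psi$ is bounded with compact support), and $\nabla\big((v_j-1)\psi\big)=(v_j-1)\nabla\psi+\psi\,\nabla v_j\to 0$ in $L^2(\R^n)$ since $\|\psi\,\nabla v_j\|_{L^2}\le\|\psi\|_{L^\infty}\|\nabla v_j\|_{L^2}\to 0$ and the first summand $\to 0$ by dominated convergence; hence $\|(v_j-1)\psi\|_{H^1(\R^n)}\to 0$ (and likewise $\|(v_j-1)\psi\|_{H^{1/2}(\R)}\to 0$ when $n=1$, by an analogous Gagliardo-seminorm computation). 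Interpolating between $s=0$ and the relevant endpoint then handles $0<s<1$ (resp.\ $0<s<1/2$), while for $s\le 0$ the trivial bound $\|\cdot\|_{H^s(\R^n)}\le\|\cdot\|_{L^2(\R^n)}$ suffices; this gives (v) for all $|s|\le 1$ (resp.\ $|s|\le 1/2$).

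\emph{Main obstacle.} The construction and the subcritical estimates are routine; the crux is the \emph{uniform} multiplier bound in the critical case ($n=2$ with $s=1$, and $n=1$ with $s=1/2$), where the logarithmically small factor $(\log 5j)^{-2}$ produced by the profile of $v_j$ must be paired against a logarithmic Hardy inequality on the fixed ball $B_{5R/2}(\bx_i)$. It is precisely to make this pairing possible that we keep the outer radius of the transition region fixed at $5R/2$ rather than letting it shrink with $j$.
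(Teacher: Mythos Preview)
For $n\geq 2$ your argument is correct and follows essentially the paper's strategy: a logarithmic cut-off profile, the uniform $H^1$ multiplier bound via Hardy (standard for $n\ge3$, logarithmic for $n=2$), then duality and interpolation to cover $|s|\le 1$, and density for (v). Your construction (a smooth $\Psi$ composed with a logarithmic radial variable) is cosmetically different from the paper's (mollification of a piecewise-defined radial function) but functionally equivalent; your $n=2$ logarithmic Hardy inequality on a ball for general $H^1$ functions is a mild but valid variant of the version the paper quotes for $\tH^1(B_R)$ (the paper introduces an auxiliary cut-off $\varphi$ to reduce to the compactly supported case).

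The genuine difference is the case $n=1$. The paper does not attempt a direct $H^{1/2}$ estimate; instead it embeds $\R$ in $\R^2$, restricts the already-constructed 2D family $(v_j)$ to the line, and transfers the uniform $H^1(\R^2)$ multiplier bound to $H^{1/2}(\R)$ in one line via a trace operator $\gamma:H^1(\R^2)\to H^{1/2}(\R)$ and a bounded right inverse $E$, giving $\|v_j|_\R\,\phi\|_{H^{1/2}(\R)}\le c\,C\,c'\|\phi\|_{H^{1/2}(\R)}$. Your direct Gagliardo approach is reasonable in spirit, but as written it has gaps. First, even granting the pointwise bound $G_j(y)\le C|y-\bx_i|^{-1}(\log 5j)^{-2}$, the substitution $\log(e\kappa/|y-\bx_i|)\le 2\log(5j)$ that converts this into the logarithmic Hardy weight is valid only on the annulus $R/(2j)<|y-\bx_i|<\kappa$; on the inner region $|y-\bx_i|<R/(2j)$ the substitution goes the wrong way and a separate argument is needed (e.g.\ using $H^{1/2}(\R)\hookrightarrow L^p$ for large $p$ together with the crude bound $G_j\lesssim j^{3/4}$ there), and the exterior region $|y-\bx_i|>\kappa$ for all $i$ also needs (easy but omitted) treatment. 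Second, and more importantly, the ``critical (logarithmic) Hardy inequality for $H^{1/2}(\R)$'' that you invoke is not a standard textbook inequality and would itself need proof or a precise reference---and the most natural proof of it proceeds via trace from the 2D logarithmic Hardy inequality, which is exactly the paper's route. So your $n=1$ argument, even if completed, is a longer path whose hardest step implicitly relies on the same 2D input the paper uses directly.
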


\begin{thm} \label{thm:new} Suppose that $|s|\leq 1$ if $n\geq 2$, $|s|\leq 1/2$ if $n=1$, that $\Omega\subset \R^n$ is open, and that: (i) $P\subset \partial \Omega$ is closed and countable with at most  finitely many limit points in every bounded subset of $\partial\Omega$; (ii) $\Omega$ has the property that, if $u\in H_{\overline{\Omega}}^s$ is compactly supported with $\supp(u)\cap P=\emptyset$, then $u\in \tH^s(\Omega)$. Then $\tH^s(\Omega)=H_{\overline{\Omega}}^s$.
\end{thm}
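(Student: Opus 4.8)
My plan is to establish the nontrivial inclusion $H^s_{\overline\Omega}\subset\tH^s(\Omega)$ (the reverse being immediate, since $\scrD(\Omega)\subset H^s_{\overline\Omega}$ and $H^s_{\overline\Omega}$ is closed in $H^s(\R^n)$) by a two-stage cutoff argument built on Lemma~\ref{lem:vj}. First I would reduce to the case of compactly supported $u$: by \eqref{eq:approx2} the compactly supported elements of $H^s_{\overline\Omega}$ are dense in $H^s_{\overline\Omega}$, and $\tH^s(\Omega)$ is closed, so it is enough to treat $u\in H^s_{\overline\Omega}$ with $K:=\supp u$ compact. The point of this reduction is that $K$ is bounded, which is exactly what makes hypothesis~(i) usable.

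Stage one removes the accumulation of $P$. Since $P\cap K$ is closed, its limit points lie in $P\cap K\subset\partial\Omega\cap K$, a bounded subset of $\partial\Omega$, so by hypothesis~(i) there are only finitely many of them, say $\bx_1,\dots,\bx_N$ (if $P\cap K$ is already finite, I would skip to stage two). Choosing $R>0$ as in \eqref{eq:Rbound} and taking the family $(v_j)_{j\in\N}$ from Lemma~\ref{lem:vj}, properties (i), (ii) and (iv) give that $v_j u\in H^s_{\overline\Omega}$ is compactly supported with $\supp(v_j u)\subset K\setminus\bigcup_i B_{R/(2j)}(\bx_i)$; this is a compact set on which $P$ has no limit point (all the $\bx_i$ have been excised by open balls), so $\supp(v_j u)\cap P$ is finite. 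By Lemma~\ref{lem:vj}(v), $v_j u\to u$ in $H^s(\R^n)$, so, $\tH^s(\Omega)$ being closed, it suffices to prove $v_j u\in\tH^s(\Omega)$ for each fixed $j$. In other words, the problem is reduced to the statement: every compactly supported $u\in H^s_{\overline\Omega}$ with $\supp u\cap P$ finite lies in $\tH^s(\Omega)$.

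For stage two I would write $\supp u\cap P=\{\by_1,\dots,\by_M\}$. If $M=0$, hypothesis~(ii) gives $u\in\tH^s(\Omega)$ directly. If $M\geq1$, apply Lemma~\ref{lem:vj} once more, now with the points $\by_1,\dots,\by_M$ and a suitable radius $R'$ satisfying the analogue of \eqref{eq:Rbound}, obtaining cutoffs $(w_k)_{k\in\N}$. Then $w_k u\in H^s_{\overline\Omega}$ is compactly supported, and since $\supp(w_k u)\subset\supp u$ misses each ball $B_{R'/(2k)}(\by_m)$ while $\supp u\cap P=\{\by_1,\dots,\by_M\}$, we get $\supp(w_k u)\cap P=\emptyset$; hypothesis~(ii) then yields $w_k u\in\tH^s(\Omega)$. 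Letting $k\to\infty$, Lemma~\ref{lem:vj}(v) together with closedness of $\tH^s(\Omega)$ gives $u\in\tH^s(\Omega)$, which completes the argument.

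The genuine difficulty is entirely concentrated in Lemma~\ref{lem:vj} — constructing smooth multipliers that vanish near prescribed points, tend to $1$, and are uniformly $H^s$-bounded for $|s|\le1$ (resp.\ $|s|\le1/2$ when $n=1$); this needs a careful logarithmic-type cutoff in the spirit of \cite[\S17]{Tartar}, and it is this construction that pins down the admissible range of $s$. Inside the present proof the only things to watch are topological bookkeeping: that passing to a compactly supported $u$ is what makes ``finitely many limit points'' applicable, and that a single application of the cutoff reduces an arbitrary $P\cap K$ only to a \emph{finite} set rather than to the empty set, so that the two-stage structure is genuinely necessary.
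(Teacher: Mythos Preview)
Your proposal is correct and follows essentially the same two-stage cutoff strategy as the paper's proof: reduce to compactly supported $u$ via \eqref{eq:approx2}, apply Lemma~\ref{lem:vj} first at the (finitely many) limit points of $P$ meeting $\supp u$, then again at the finitely many remaining points of $P$, and conclude by closedness of $\tH^s(\Omega)$. The only cosmetic difference is that the paper nests the two applications of Lemma~\ref{lem:vj} (writing $w_{j,\ell}=v_{j,\ell}v_j v$) whereas you present them sequentially, first reducing to the case $\supp u\cap P$ finite and then to $\supp u\cap P=\emptyset$; the content is identical.
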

\begin{proof} Suppose that $|s|\leq 1$ if $n\geq 2$, $|s|\leq 1/2$ if $n=1$. Since the set of compactly supported $v\in H^s_{\overline{\Omega}}$ is dense in $H^s_{\overline{\Omega}}$ by \eqref{eq:approx2} and $\tH^s(\Omega)$ is closed, it is enough to show that $v\in \tH^s(\Omega)$ for every compactly supported $v\in H^s_{\overline{\Omega}}$. So suppose that $v\in H^s_{\overline \Omega}$ is compactly supported, and let $Q$ be the (finite) set of limit points of $P$ that lie in the support of $v$. Let $(v_j)_{j\in \N}\subset C^\infty(\R^n)$ be a family constructed as in Lemma \ref{lem:vj}, such that $v_jv\to v$ as $j\to\infty$ in $H^s(\R^n)$, and each $v_j=0$ in a neighbourhood of $Q$. For each $j\in \N$, $P_j := P\cap \supp(v_jv)$ is finite. For each $j\in \N$, let $(v_{j,\ell})_{\ell\in \N}\subset C^\infty(\R^n)$ be a family constructed as in Lemma \ref{lem:vj}, such that $v_{j,\ell}v_jv\to v_jv$ as $\ell\to\infty$ in $H^s(\R^n)$, and each $v_{j,\ell}=0$ in a neighbourhood of $P_j$. Then $w_{j,\ell}:= v_{j,\
ell}v_jv\in \tH^s(\Omega)$, for all $j,\ell\in \N$, by hypothesis. Since $\tH^s(\Omega)$ is closed it follows that $v\in \tH^s(\Omega)$.
\end{proof}

In the next theorem, when we say that the open set $\Omega\subset \R^n$ is $C^0$ except at the points $P\subset \partial \Omega$, we mean that its  boundary $\partial\Omega$ can, in a neighbourhood of each point in $\partial \Omega\setminus P$, be locally represented as the graph (suitably rotated) of a $C^0$ function from $\R^{n-1}$ to $\R$, with $\Omega$ lying only on one side of $\partial\Omega$.
(In more detail we mean that $\Omega$ satisfies the conditions of \cite[Definition 1.2.1.1]{Gri}, but for every $\bx \in \partial \Omega \setminus P$ rather than for every $\bx \in \partial \Omega$.)

\begin{thm} \label{thm:new2}  Suppose that $|s|\leq 1$ if $n\geq 2$, $|s|\leq 1/2$ if $n=1$. Suppose further that $\Omega\subset\R^n$ is open, and that $\Omega$ is $C^0$ except at a set of points $P$ satisfying condition (i) of Theorem \ref{thm:new}. Then $\tH^s(\Omega)=H_{\overline{\Omega}}^s$. In particular, $\tH^s(\Omega)=H_{\overline{\Omega}}^s$ if $\Omega$ is the union of disjoint $C^0$ open sets, whose closures intersect only at a set of points $P$ that satisfies condition (i) of Theorem \ref{thm:new}.
\end{thm}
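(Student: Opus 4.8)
The plan is to deduce the statement from Theorem~\ref{thm:new}. Hypothesis~(i) of that theorem holds by assumption, so the whole task is to verify hypothesis~(ii): that if $u\in H^s_{\overline{\Omega}}$ is compactly supported with $\supp(u)\cap P=\emptyset$, then $u\in\tH^s(\Omega)$. It is worth noting in advance that the restriction on $s$ in the statement enters only through the appeal to Theorem~\ref{thm:new} (ultimately through Lemma~\ref{lem:vj}); the verification of (ii) below works for every $s\in\R$ and uses only the $C^0$ case recorded in Lemma~\ref{lem:sob_equiv}.

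So fix such a $u$ and put $K:=\supp(u)$. Each $\bx\in K$ is either an interior point of $\Omega$ or a point of $\partial\Omega$, and in the latter case $\bx\notin P$, so $\Omega$ is $C^0$ at $\bx$. In either case one can find $r_{\bx}>0$ and a \emph{globally} $C^0$ open set $\Upsilon_{\bx}\subset\R^n$ with $\Omega\cap B_{r_{\bx}}(\bx)=\Upsilon_{\bx}\cap B_{r_{\bx}}(\bx)$: if $\bx\in\Omega$, take $\Upsilon_{\bx}=\R^n$ and $r_{\bx}$ so small that $B_{r_{\bx}}(\bx)\subset\Omega$; if $\bx\in\partial\Omega$, take $\Upsilon_{\bx}$ to be the (suitably rotated and translated) hypograph of a continuous function on $\R^{n-1}$ that coincides near $\bx$ with the local graph representation of $\partial\Omega$ (extending that graph function to all of $\R^{n-1}$ by the Tietze extension theorem if necessary). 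By compactness, finitely many balls $B_k:=B_{r_k/2}(\bx_k)$, $k=1,\dots,M$ (with $r_k:=r_{\bx_k}$, $\Upsilon_k:=\Upsilon_{\bx_k}$), cover $K$. Take a partition of unity $\chi_k\in\scrD(B_k)$, $0\le\chi_k\le1$, with $\sum_{k=1}^M\chi_k=1$ on a neighbourhood of $K$, so that $u=\sum_{k=1}^M\chi_k u$, a finite sum of elements of $H^s(\R^n)$ (multiplication by a fixed element of $\scrD(\R^n)$ is bounded on $H^s(\R^n)$ for every $s$).

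It remains to show $\chi_k u\in\tH^s(\Omega)$ for each $k$. From $\Omega\cap B_{r_k}(\bx_k)=\Upsilon_k\cap B_{r_k}(\bx_k)$ it follows that $\overline{\Omega}$ and $\overline{\Upsilon_k}$ agree on $B_k$, so $\supp(\chi_k u)\subset\overline{\Omega}\cap B_k=\overline{\Upsilon_k}\cap B_k$, whence $\chi_k u\in H^s_{\overline{\Upsilon_k}}$. Since $\Upsilon_k$ is $C^0$, Lemma~\ref{lem:sob_equiv} gives $H^s_{\overline{\Upsilon_k}}=\tH^s(\Upsilon_k)$, so there exist $\phi_{k,j}\in\scrD(\Upsilon_k)$ with $\phi_{k,j}\to\chi_k u$ in $H^s(\R^n)$. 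Choosing $\psi_k\in\scrD(B_k)$ with $\psi_k\equiv1$ on a neighbourhood of $\supp(\chi_k u)$, we get $\psi_k\phi_{k,j}\to\psi_k(\chi_k u)=\chi_k u$ in $H^s(\R^n)$, while $\supp(\psi_k\phi_{k,j})\subset B_k\cap\Upsilon_k=B_k\cap\Omega\subset\Omega$, so $\psi_k\phi_{k,j}\in\scrD(\Omega)$. Hence $\chi_k u\in\tH^s(\Omega)$, and summing gives $u\in\tH^s(\Omega)$. This verifies hypothesis~(ii), and Theorem~\ref{thm:new} yields the first assertion.

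For the ``in particular'' statement one only has to check that $\Omega=\bigcup_\ell\Omega_\ell$ as described is $C^0$ except at $P$, and then quote what has just been proved: if $\bx\in\partial\Omega\setminus P$, then $\bx$ lies in exactly one of the sets $\overline{\Omega_\ell}$ (not in two, by the definition of $P$; not in $\Omega$, since $\bx\in\partial\Omega$), so some neighbourhood $U$ of $\bx$ satisfies $U\cap\Omega=U\cap\Omega_\ell$, and $\Omega$ inherits the $C^0$ property of $\Omega_\ell$ at $\bx$ (for an infinite union one should additionally assume $\{\overline{\Omega_\ell}\}$ locally finite, to rule out boundary points of $\Omega$ that are accumulation points of the family lying in no single $\overline{\Omega_\ell}$; this is automatic for finite unions). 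Given Theorem~\ref{thm:new}, I expect no deep obstacle: the one point needing care is the local-to-global passage above, since $\Upsilon_k$ differs from $\Omega$ away from $B_k$ and so approximation in $\tH^s(\Upsilon_k)$ does not \emph{a priori} give approximation in $\tH^s(\Omega)$ — this is exactly what the cutoff multiplier $\psi_k$ repairs. The genuinely hard analytic input, the construction of the multipliers of Lemma~\ref{lem:vj} with uniform $H^s$-bounds for $|s|\le1$ (resp.\ $|s|\le1/2$ when $n=1$), has already been isolated in Lemma~\ref{lem:vj} and Theorem~\ref{thm:new}.
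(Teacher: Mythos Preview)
Your proof is correct and follows the same overall strategy as the paper: verify hypothesis~(ii) of Theorem~\ref{thm:new} by a partition of unity that localises $u$ to pieces near which $\Omega$ is~$C^0$. The implementation differs slightly. The paper, having cut $u$ into pieces $\chi_i u$ supported near points where $\partial\Omega$ is a $C^0$ graph, approximates each piece directly by the shift-then-mollify argument from the proof of \cite[Theorem~3.29]{McLean}. You instead construct, for each piece, an auxiliary \emph{globally} $C^0$ open set $\Upsilon_k$ (a rotated hypograph, extended via Tietze) that coincides with $\Omega$ on $B_{r_k}(\bx_k)$, invoke Lemma~\ref{lem:sob_equiv} on $\Upsilon_k$ to get approximants in $\scrD(\Upsilon_k)$, and then multiply by a further cutoff $\psi_k\in\scrD(B_k)$ to force the approximants into $\scrD(\Omega)$. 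Your route black-boxes the $C^0$ case cleanly and avoids re-doing the shift argument by hand; the paper's route is more self-contained and avoids the Tietze extension and the second cutoff. The shrinking from $B_{r_k}$ to $B_k=B_{r_k/2}$ is exactly what makes your closure identity $\overline{\Omega}\cap B_k=\overline{\Upsilon_k}\cap B_k$ hold, and the cutoff $\psi_k$ is precisely the fix needed to transfer approximation from $\tH^s(\Upsilon_k)$ to $\tH^s(\Omega)$; both points are handled correctly.
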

\begin{proof} The first two sentences of this result will follow from Theorem \ref{thm:new} if we can show that $\Omega$ satisfies condition (ii) of Theorem \ref{thm:new}.  We will show that this is true (for all $s\in \R$) by a partition of unity argument, adapting the argument used to prove Lemma \ref{lem:sob_equiv} in \cite[Theorem 3.29]{McLean}.

Suppose that $u\in H_{\overline{\Omega}}^s$ is compactly supported with $\supp(u)\cap P=\emptyset$. For each $\bx \in \supp(u)$, let $\epsilon(\bx)>0$ be such that $\partial \Omega$ is the rotated graph of a $C^0$ function and $\Omega$ the rotated hypograph of that function in $B_{3\epsilon(\bx)}(\bx)$ if $\bx\in \partial \Omega$, and such that $B_{3\epsilon(\bx)}(\bx)\subset \Omega$ if $\bx \in \Omega$. Then $\{B_{\epsilon(\bx)}(\bx):\bx \in \supp(u)\}$ is an open cover for $\supp(u)$. Since $\supp(u)$ is compact we can choose a finite sub-cover $\mathcal{W}=\{B_{\epsilon(\bx_i)}(\bx_i):i \in \{1,...,N\}\}$. Choose a partition of unity $(\chi_i)_{i=1}^N$ for $\supp(u)$ subordinate to $\mathcal{W}$, with $\supp(\chi_i)\subset B_{\epsilon(\bx_i)}(\bx_i)$ for $1\leq i\leq N$, this possible by \cite[Theorem 2.17]{grubb}. Given $\eta>0$, for $i=1,...,N$ choose $\phi_i\in \scrD(\Omega)$ such that $\|\chi_iu-\phi_i\|_{H^s(\R^n)}\leq \eta/N$. This is possible by \eqref{eq:approx} if $\bx_i\in \Omega$. To see that 
this is possible if $\bx_i\in \partial \Omega \cap \supp(u)$ we argue as in the proof of Lemma \ref{lem:sob_equiv} given in \cite[Theorem 3.29]{McLean}, first making a small shift of $\chi_iu$ to move its support into $\Omega$, and then approximating by \eqref{eq:approx}. Then $\phi = \sum_{i=1}^N \phi_i\in \scrD(\Omega)$ and $\|u-\phi\|_{H^s(\R^n)}\leq\eta$. Since $\eta>0$ is arbitrary, this implies that $u\in \tH^s(\Omega)$.

The last sentence of the theorem is an immediate corollary.
\end{proof}

The above theorem applies, in particular, whenever $\Omega$ is $C^0$ except at a finite number of points. The following remark notes applications of this type.
\begin{rem}
Theorem \ref{thm:new2} implies that $\tH^s\OO=H^s_{\overline\Omega}$, for $|s|\leq 1$, for a number of well-known examples of non-$C^0$ open sets. In particular we note the following examples, illustrated in Figure \ref{fig:TSExamples}, all of which are $C^0$ except at a finite number of points:
\begin{enumerate}
\item any finite union of polygons (in $\R^2$) or $C^0$ polyhedra (in $\R^3$) where the closures of the constituent polygons/polyhedra intersect only at a finite number of points, for example the standard prefractal approximations to the Sierpinski triangle (see Figure \ref{subfig:Sierpinski});
\item the double brick domain of \cite[p.~91]{McLean} (see Figure \ref{subfig:DoubleBrick});
\item sets with ``curved cusps'', either interior or exterior,
e.g.\ $\{(x,y)\in \R^2 : x^2+y^2<1 \textrm{ and } x^2+(y+1/2)^2>1/2\}$ or its complement (see Figure \ref{subfig:Cusps});
\item spiral domains, e.g.\ $\{(r\cos{\theta},r\sin\theta)\in \R^2:2^{\theta/(2\pi)}<r<\frac{3}{2}2^{\theta/(2\pi)}, \, \theta\in\R\}$ (see Figure \ref{subfig:Spiral});
\item the ``rooms and passages'' domain of \cite[\S2.1]{Fr:79} (see Figure \ref{subfig:RoomsAndPassages}).
\end{enumerate}
\end{rem}

\begin{figure}[!t]
\begin{center}
\subfigure[\label{subfig:Sierpinski} The first four prefractal approximations to the Sierpinski triangle]{
\hspace{-7.5mm}
\includegraphics[height=25mm]{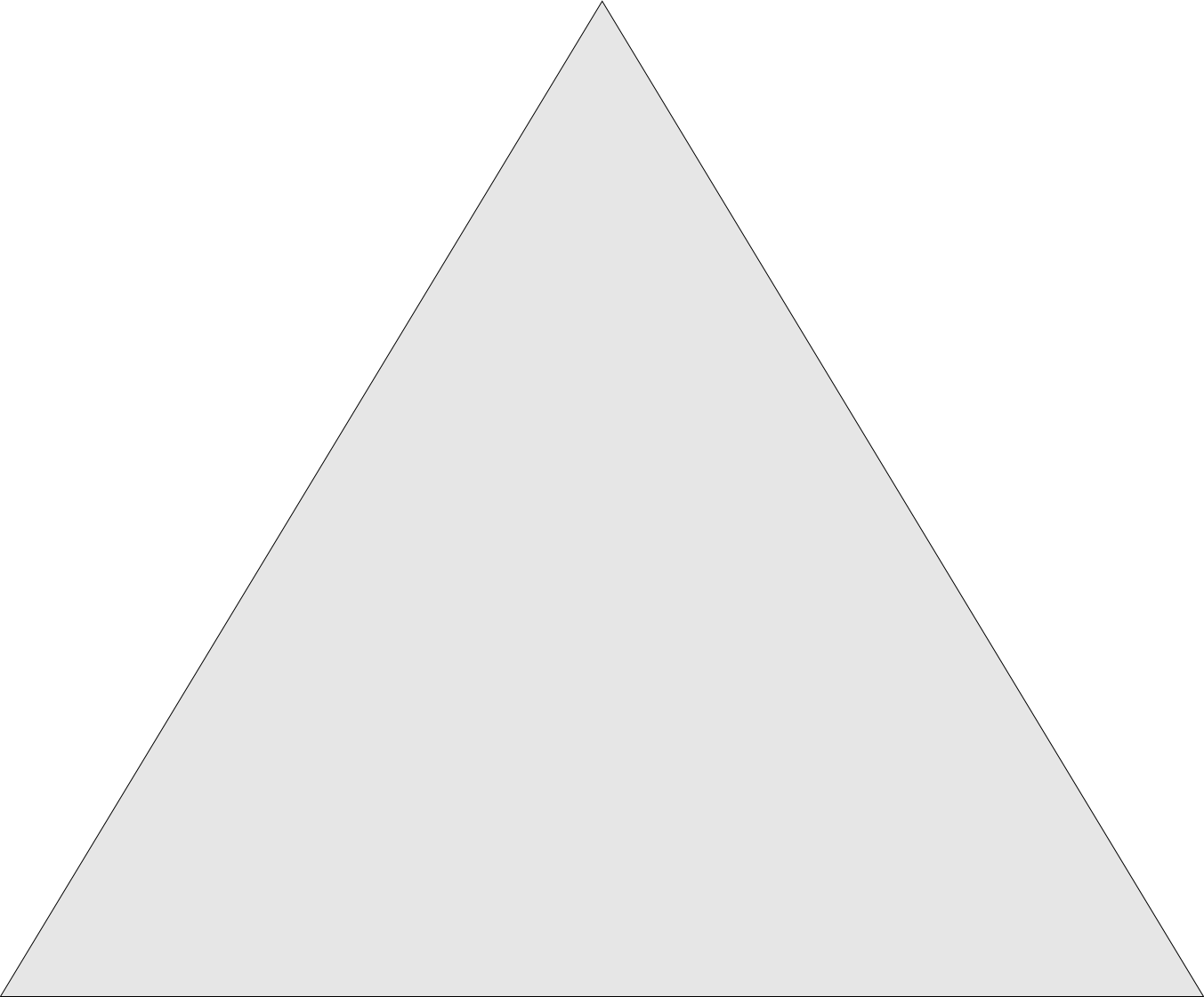}\hs{2}
\includegraphics[height=25mm]{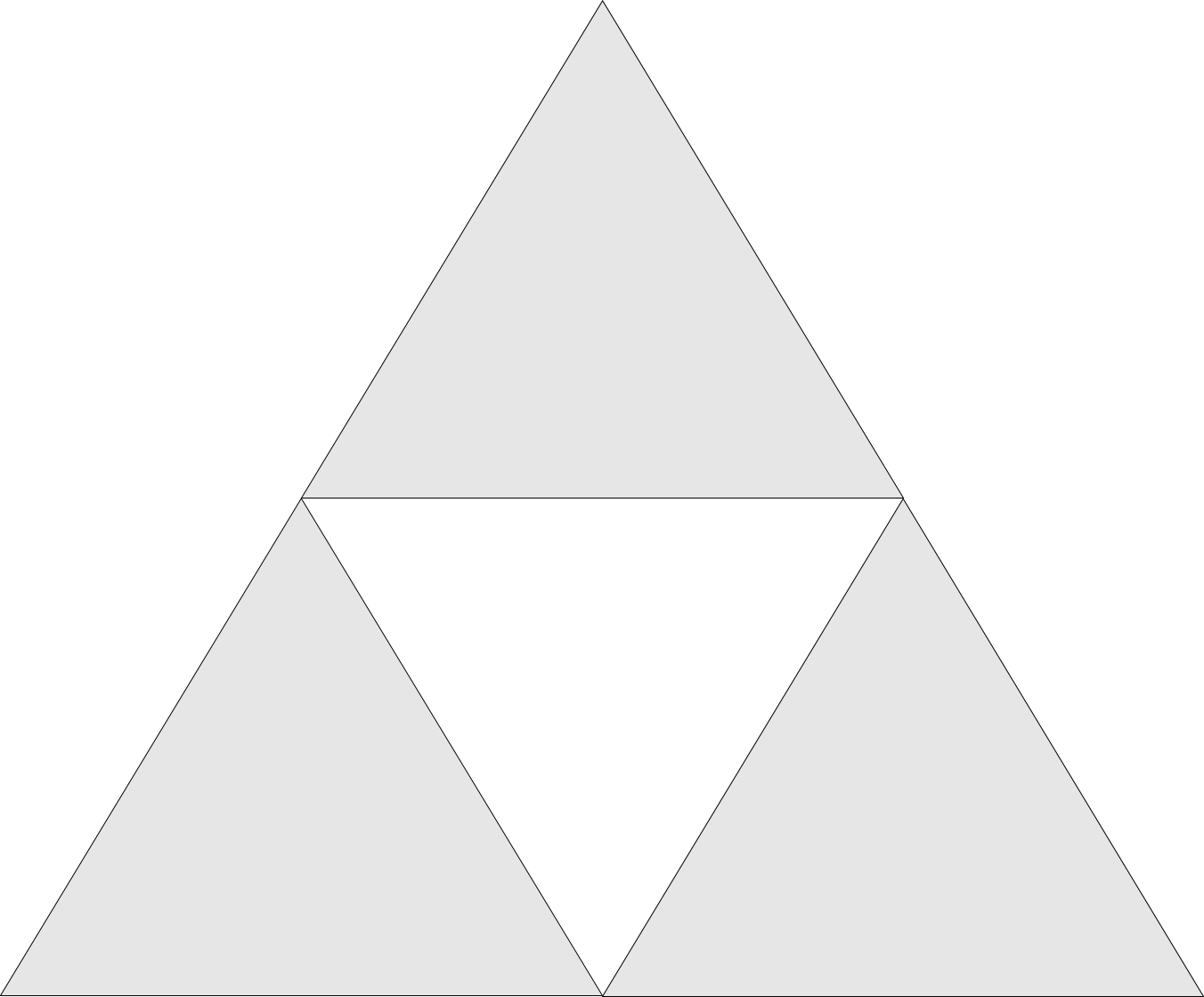}\hs{2}
\includegraphics[height=25mm]{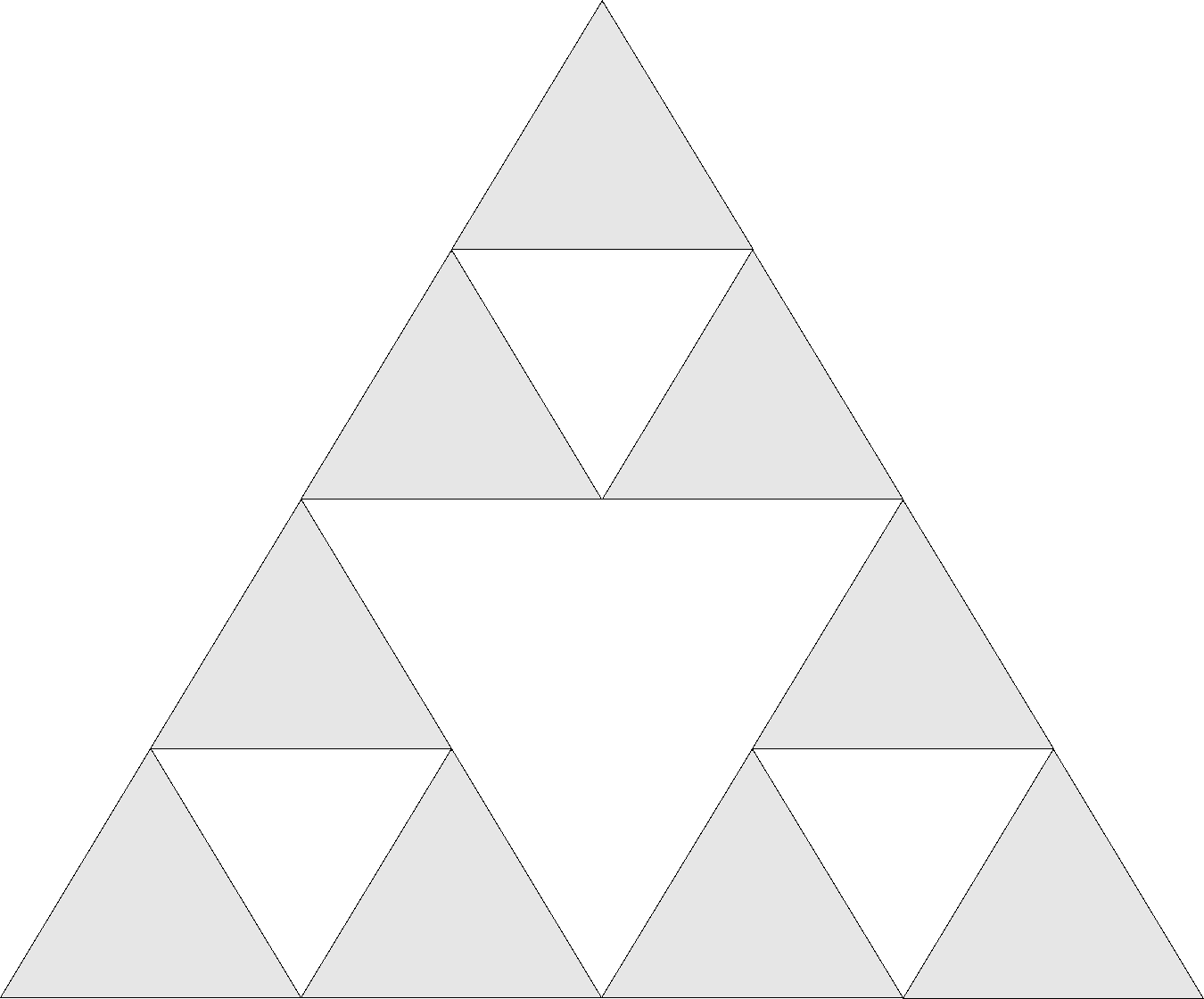}\hs{2}
\includegraphics[height=25mm]{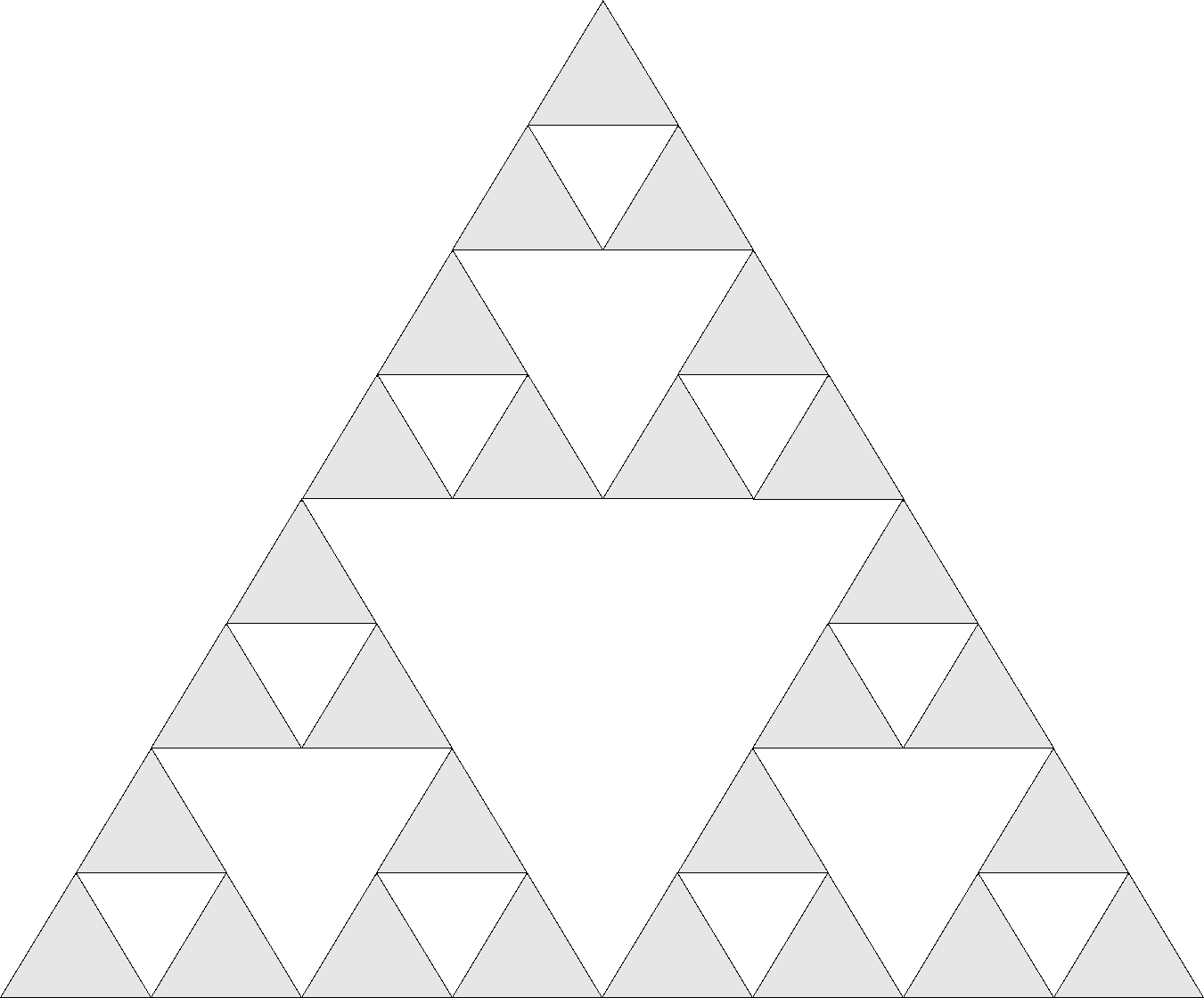}
}
\subfigure[\label{subfig:DoubleBrick} Double brick]{
\includegraphics[height=25mm]{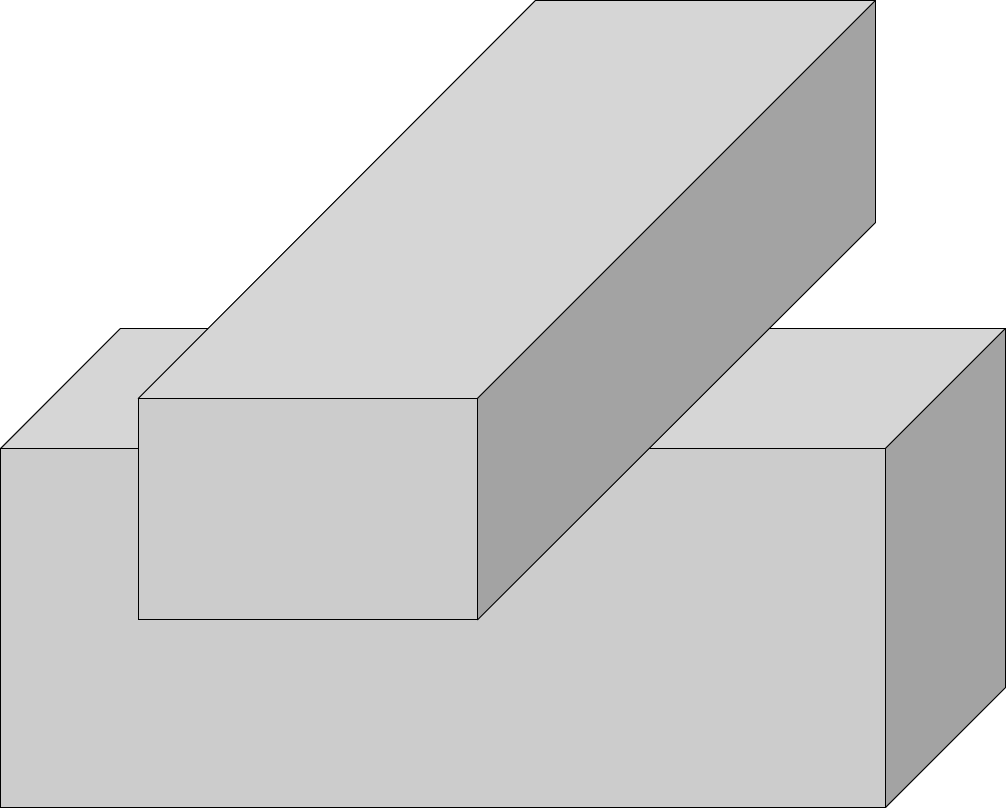}
}
\hs{3}
\subfigure[\label{subfig:Cusps} Curved cusps]{
\includegraphics[height=30mm]{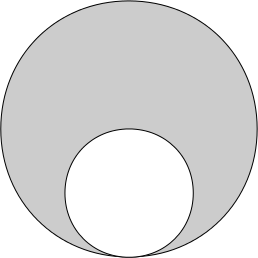}
}
\hs{3}
\subfigure[\label{subfig:Spiral} Spiral]{
\includegraphics[height=30mm]{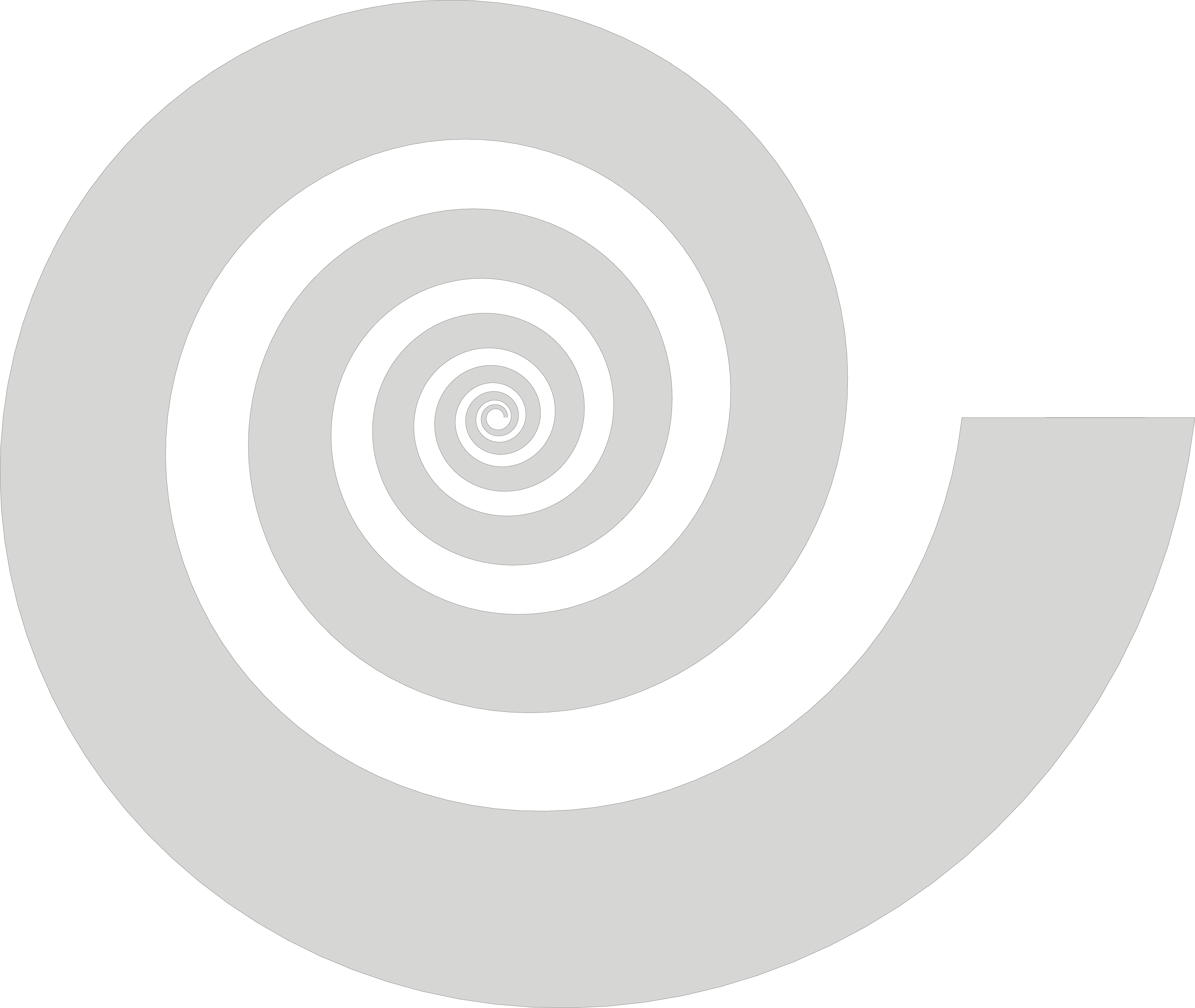}
}
\subfigure[\label{subfig:RoomsAndPassages} ``Rooms and passages'']{
\includegraphics[height=30mm]{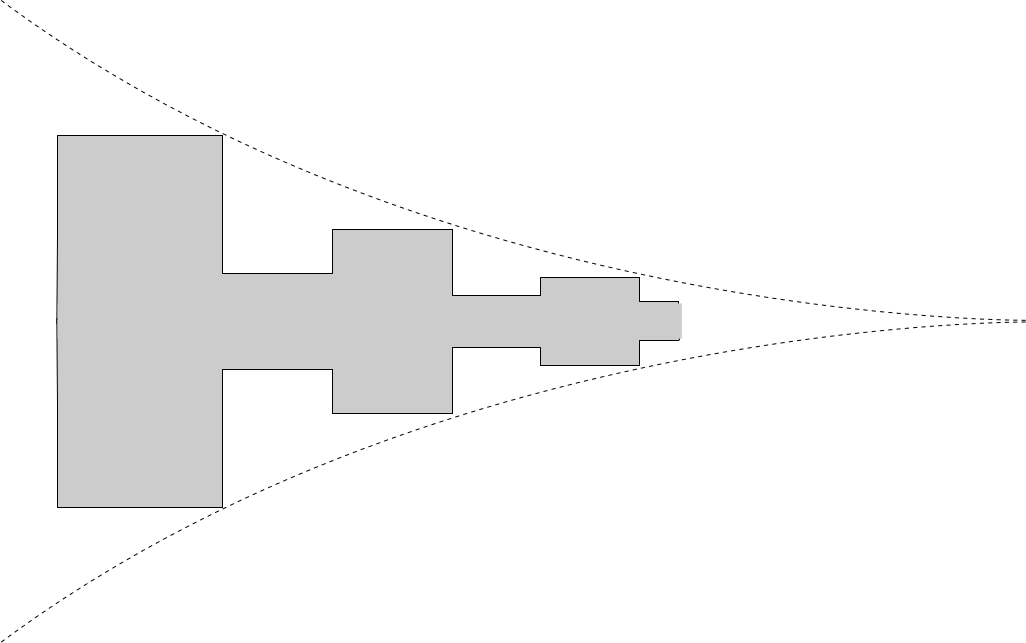}
}
\end{center}
\caption{\label{fig:TSExamples} Examples of non-$C^0$ open sets to which Theorem \ref{thm:new2} applies.}
\end{figure}

\begin{proof}[Proof of Lemma \ref{lem:vj}.]
Choose $R>0$ to satisfy \eqref{eq:Rbound}. The case $n=2$ is the hardest so we start with that. For $j\in \N$, define $\Phi_j\in C(\R)$ by
\begin{align*}%
\Phi_j(r):=
\begin{cases}
0, & r\leq R/j,\\
1-\dfrac{\log(r/(2R))}{\log(1/(2j))}, & R/j<r\leq 2R,\\
1, & r>2R,
\end{cases}
\end{align*}
and note that $\Phi_j^\prime(r) = (r\log(2j))^{-1}$, for $R/j<r<2R$.
We define by mollification a smoothed version $\Psi_j$ of $\Phi_j$. Choose $\chi\in \scrD(\R)$ with $0\leq \chi(t)\leq 1$ for $t\in\R$, $\chi(t)=0$ if $|t|\geq 1$, and $\int_{-\infty}^\infty \chi(t) \rd t = 1$. Define $\chi_j(t) := (2j/R)\chi(2jt/R)$, $t\in \R$, and
$$
\Psi_j(r) := \int_{-\infty}^\infty \chi_j(r-t)\Phi_j(t)\, \rd t = \int_{-\infty}^\infty \chi_j(t)\Phi_j(r-t)\, \rd t, \quad r\in \R.
$$
Then $\Psi_j\in C^\infty(\R)$, $0\leq \Psi_j(r)\leq 1$ for $r\in \R$, $\Psi_j(r)=0$ if $r\leq R/(2j)$, $\Psi_j(r)=1$ if $r\geq 2R + R/(2j)$, and
\begin{equation} \label{eq:Psibound}
0\leq\Psi^\prime_j(r)\leq \max_{|t-r|\leq R/(2j)}\, \Phi^\prime_j(t) \leq \frac{3}{2r\log(2j)}, \quad \mbox{for }\frac{R}{2j}< r\leq 2R+ \frac{R}{2j}.
\end{equation}

For $n=2$ we define the sequence $(v_j)_{j\in \N}$ by
\begin{equation} \label{eq:vjdef}
v_j(\bx) := \prod_{i=1}^N \Psi_j(|\bx-\bx_i|), \quad \bx\in \R^2.
\end{equation}
Clearly $v_j\in C^\infty(\R^2)$ and satisfies conditions (i)-(iii).  Noting that
\begin{equation} \label{eq:vjbound}
|\nabla v_j(\bx)| = \left\{\begin{array}{cc}
                         \Psi_j^\prime(|\bx-\bx_i|),   & \mbox{for } |\bx-\bx_i|\leq 5R/2, \;\; i\in \{1,...,N\}, \\
                           0, & \mbox{otherwise},
                         \end{array}\right.
\end{equation}
 it follows from \eqref{eq:Psibound} that $\|\nabla v_j\|_{L^2(\R^2)}\to 0$ as $j\to\infty$, and hence by the dominated convergence theorem that (v) holds for all $\phi\in \scrD(\R^2)$ and $s=1$ (and so also for $s<1$). Thus, if (iv) holds, (v) follows by density arguments.

We will prove (iv) first for $s=1$, then for $s=-1$ by a duality argument, then for $s\in[-1,1]$ by interpolation. 
Choose $\varphi\in \scrD(\R^2)$ with support in $\cup_{i=1}^N B_R(\bx_i)$ and such that $\varphi=1$ in a neighbourhood of $\{\bx_1,...,\bx_N\}$. It is clear from \eqref{eq:vjbound} and \eqref{eq:Psibound} that the operation of multiplication by $(1-\varphi)v_j$ is bounded on $H^1(\R^2)$, uniformly in $j$. It follows from the same bounds and the fact (for $n=2$) that (cf.\ \cite[Lemma 17.4]{Tartar})
\begin{align}
\label{eqn:Tartar17p4}
\int_{B_{R}}\frac{|u|^2}{|\bx|^2\log^2(|\bx|/R)}\, \rd\bx\leq 4\int_{B_{R}}|\nabla u|^2\, \rd\bx, \qquad u\in \widetilde{H}^1(B_{R}),
\end{align}
that the operation of multiplication by $\varphi v_j$ is bounded on $H^1(\R^2)$, uniformly in $j$. Thus (iv) holds for some constant $C>0$ for $s=1$. Abbreviating $H^{\pm 1}(\R^2)$ by $H^{\pm 1}$ and $\langle \cdot,\cdot\rangle_{H^{-1}\times H^{1}}$ by $\langle \cdot,\cdot\rangle$, since $H^{-1}$ is a unitary realisation of $(H^{1})^*$ it holds for $\phi\in H^{-1}$ that
\begin{align*}%
\|v_j\phi\|_{H^{-1}} = \sup_{\substack{v\in H^1\\ \|v\|_{H^1}=1}}\left|\langle v_j\phi,v\rangle\right|
= \sup_{\substack{v\in H^1\\ \|v\|_{H^1}=1}}\left|\langle \phi,v_jv\rangle\right| \leq C\|\phi\|_{H^{-1}},
\end{align*}
i.e.\ (iv) holds also for $s=-1$ with the same constant $C$, and hence also for $s\in [-1,1]$ by interpolation (e.g.\ \cite[(1) and Theorem 4.1]{InterpolationCWHM}).

If $n\ge3$ we argue and define $v_j$ as above, but with the simpler choice $\Psi_j(r):=\psi(jr/R)$, where $\psi$ is any function in $C^\infty(\R)$ with $\psi(r)=0$ for $r<1$, $\psi(r)=1$ for $r>2$, and $0\leq \psi(r)\leq 1$ for $r\in\R$. To prove (iv) for $s=1$ one uses instead of \rf{eqn:Tartar17p4} the bound (cf.\ \cite[Lemma 17.1]{Tartar})
\begin{align*}%
\displaystyle{\int_{\R^n}\frac{|u|^2}{|\bx|^2} \, \rd\bx\leq \frac{2}{n-2}\int_{\R^n}|\nabla u|^2\,\rd\bx, \qquad u\in H^1(\R^n).}
\end{align*}

 If $n=1$ then the result follows by embedding $\R$ in $\R^2$, trace theorems, interpolation and duality. In more detail, if $\bx_1,...,\bx_N\in \R\subset \R^2$ are distinct, and $(v_j)_{j\in \N}\subset C^\infty(\R^2)$, satisfying (i)-(v) for $n=2$ and $|s|\leq 1$, is defined by \eqref{eq:vjdef}, then $(v_j|_\R)_{j\in\N}$ satisfies (i)-(iii) for $n=1$. To see (iv) holds, note that $(v_j|_\R)$ is uniformly bounded in $L^\infty(\R)$. Moreover, let $c$ denote the norm of the trace operator $\gamma:H^1(\R^2)\to H^{1/2}(\R)$, defined by $\gamma v = v|_\R$ for $v\in \scrD(\R^2)$, and $c^\prime$ the norm of a right inverse $E:H^{1/2}(\R)\to H^1(\R^2)$ of $\gamma$. Then
\begin{equation} \label{eq:ineq}
\|v_j|_\R\phi\|_{H^{1/2}(\R)} \leq c \|v_jE\phi\|_{H^1(\R^2)} \leq cC \|E\phi\|_{H^1(\R^2)}\leq c^\prime cC \|\phi\|_{H^{1/2}(\R^2)},
\end{equation}
for $\phi\in H^{1/2}(\R)$. Thus $(v_j|_\R)$ satisfies (iv) for $n=1$ for $s=0$ and $s=1/2$, and hence for $0\leq s\leq 1/2$ by interpolation, and then for $-1/2\leq s<0$ by duality arguments as above.  Finally, (v) follows by density, as in the case $n=2$, if we can show that (v) holds for $s=1/2$ and all $\phi\in \scrD(\R)$. But, arguing as in \eqref{eq:ineq}, this follows from (v) for $n=2$.
\end{proof}

We end this section with a result linking the inclusions in \rf{eqn:inclusionsRepeat} to taking complements. This result generalises \cite[Theorem~1.1]{Po:72}, where the same result is proved for the special case where $s\in \N$ and $\Omega$ is the interior of a compact set.
\begin{lem}
\label{lem:TSComplements}
Let $\Omega\subset \R^n$ be open and non-empty, and let $s\in \R$. %
Then $\tH^s(\Omega) = H^s_{\overline{\Omega}}$ if and only if $\tH^{-s}(\overline{\Omega}^c) = H^{-s}_{\Omega^c}$.
\end{lem}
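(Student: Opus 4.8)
The plan is to pass to annihilators in $H^{-s}(\R^n)$ and exploit the fact that, restricted to closed subspaces, the annihilator operation is an involution. Throughout I write $V^a := V^{a,H^{-s}(\R^n)}$ for the annihilator of a subset $V\subset H^s(\R^n)$ relative to the unitary realisation $(H^{-s}(\R^n),\cI^s)$ of $(H^s(\R^n))^*$ with the duality pairing \eqref{DualDef}.

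First I would record two annihilator identities coming straight from Lemma~\ref{lem:orth_lem}. The first identity of that lemma gives directly $(\tH^s(\Omega))^a = H^{-s}_{\Omega^c}$. Assuming for the moment that $\overline\Omega\neq\R^n$, the set $\overline\Omega^c$ is a non-empty open subset of $\R^n$ with $(\overline\Omega^c)^c=\overline\Omega$, so applying the second identity of Lemma~\ref{lem:orth_lem} with $\Omega$ there replaced by $\overline\Omega^c$ yields $(H^s_{\overline\Omega})^a = \tH^{-s}(\overline\Omega^c)$.

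Next I would invoke \eqref{eq:AnnihilatorResult} with $H=H^s(\R^n)$ and $\cH=H^{-s}(\R^n)$: for every closed linear subspace $V\subset H^s(\R^n)$ one has $(V^a)^{a,H^s(\R^n)}=V$, so $V\mapsto V^a$ is injective on the closed subspaces of $H^s(\R^n)$. Since $\tH^s(\Omega)$ and $H^s_{\overline\Omega}$ are both closed subspaces of $H^s(\R^n)$, combining this with the two identities above gives
\begin{equation*}
\tH^s(\Omega)=H^s_{\overline\Omega}\;\iff\;(\tH^s(\Omega))^a=(H^s_{\overline\Omega})^a\;\iff\; H^{-s}_{\Omega^c}=\tH^{-s}(\overline\Omega^c),
\end{equation*}
which is the asserted equivalence. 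For the remaining degenerate case $\overline\Omega=\R^n$ one checks the statement directly: then $H^s_{\overline\Omega}=H^s(\R^n)$ and $\tH^{-s}(\overline\Omega^c)=\tH^{-s}(\emptyset)=\{0\}$, while $(H^s(\R^n))^a=\{0\}$ by non-degeneracy of the pairing, so using $(\tH^s(\Omega))^a=H^{-s}_{\Omega^c}$ and the injectivity above both sides of the claimed equivalence reduce to the single condition $H^{-s}_{\Omega^c}=\{0\}$.

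The argument is essentially pure bookkeeping built on Lemma~\ref{lem:orth_lem} and the involutivity of annihilators, so there is no substantive obstacle; the only point that needs a moment's care is the degenerate configuration $\overline\Omega^c=\emptyset$, where $\tH^{-s}$ is being evaluated on the empty set and Lemma~\ref{lem:orth_lem} does not literally apply, which is why it is treated separately above.
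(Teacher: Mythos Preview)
Your argument is correct and essentially the same as the paper's: both proofs compute the annihilators of $\tH^s(\Omega)$ and $H^s_{\overline\Omega}$ via Lemma~\ref{lem:orth_lem} and then invoke the injectivity of the annihilator map on closed subspaces (equation~\eqref{eq:AnnihilatorResult}). The only cosmetic difference is that the paper takes annihilators in $H^s(\R^n)$ of subspaces of $H^{-s}(\R^n)$, whereas you go the other way; and your explicit treatment of the degenerate case $\overline\Omega=\R^n$ (where $\overline\Omega^c=\emptyset$ and Lemma~\ref{lem:orth_lem} does not literally apply) is a nice point of care that the paper's proof leaves implicit.
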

\begin{proof}
Applying Lemma~\ref{lem:orth_lem} twice, and using $V_2^{a,H^{s}\Rn}\subset V_1^{a,H^{s}\Rn}$ for all closed spaces $V_1\subset V_2\subset H^{-s}\Rn$, we have
$\tH^s\OO=(H^{-s}_{\Omega^c})^{a,H^{s}\Rn}
\subset (\tH^{-s}(\overline\Omega^c))^{a,H^{s}\Rn}=H^s_{\overline\Omega}$.
The assertion follows noting that $V_1^{a,H^{s}\Rn}= V_2^{a,H^{s}\Rn}$ if and only if $V_1= V_2$.
\end{proof}

\begin{rem}
If $\mathrm{int}(\overline{\Omega})\setminus\Omega$ is $(-s)$-null (for example if $\Omega=\inter(\overline\Omega)$) then $H^{-s}_{\Omega^c} = H^{-s}_{ \overline{\overline{\Omega}^c}}$, by Theorem~\ref{thm:Hs_equality_closed}, and the fact that $ \inter(\overline\Omega)\setminus \Omega=\Omega^c\setminus \overline{\overline\Omega^c}$. In this case, Lemma \ref{lem:TSComplements} says that $\tH^s(\Omega) = H^s_{\overline{\Omega}}$ if and only if $\tH^{-s}(U) = H^{-s}_{\overline{U}}$, where $U=\overline{\Omega}^c$.
\end{rem}

\subsection{When is \texorpdfstring{$H^s_0(\Omega)=H^s(\Omega)$}{Hs0(Omega)=Hs(Omega)}?}
\label{subsec:Hs0vsHs}

The space $H^s_0(\Omega)$ was defined in \eqref{eq:Hs0} as a closed subspace of $H^s(\Omega)$.
In this section we investigate the question of when these two spaces coincide, or, equivalently, when
 $\scrD(\Omega)|_\Omega$ is dense in $H^s(\Omega)$.
One classical result (see \cite[Theorem 1.4.2.4]{Gri} or \cite[Theorem 3.40]{McLean}) is that if $\Omega$ is Lipschitz and bounded, then $H^s_0(\Omega)=H^s(\Omega)$ for $0\leq s\leq 1/2$.
In Corollary \ref{cor:Hs0HsEqual} we extend this slightly, by showing that equality in fact extends to $s<0$ (in fact this holds for any open set $\Omega$, see parts \rf{f3} and \rf{e3} below),
as well as presenting results for non-Lipschitz $\Omega$.
The proofs of the results in Corollary \ref{cor:Hs0HsEqual} are based on the following lemma, which states that the condition $H^s_0(\Omega)=H^s(\Omega)$ is equivalent to a certain subspace of $H^{-s}(\R^n)$ being trivial.
This seemingly new characterisation follows directly from the dual space realisations derived in \S\ref{subsec:DualAnnih}.
\begin{lem}
\label{lem:Hs0HsEquiv}
Let $\Omega\subset \R^n$ be non-empty and open, and let $s\in\R$. Then $H^s_0(\Omega)=H^s(\Omega)$ if and only if $\tH^{-s}(\Omega) \cap H^{-s}_{\partial\Omega} =\{0\}$.
\end{lem}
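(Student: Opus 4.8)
The plan is to read off the equivalence directly from the dual-space realisation of $H^s_0(\Omega)$ obtained in Lemma~\ref{lem:Hs0Dual}, combined with the elementary Hilbert-space fact recorded in \S\ref{sec:DualSpaceRealisations} that a closed subspace of a Hilbert space exhausts the whole space precisely when its annihilator in a unitary realisation of the dual is trivial.

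First I would recall the set-up of \S\ref{sec:DualSpaceRealisations}: if $(\cH,\cI)$ is a unitary realisation of $H^*$ with associated duality pairing $\langle\cdot,\cdot\rangle$ and $V\subset H$ is a closed subspace, then $V^{a,\cH}=j(V^\perp)$, where $j:=\cI^{-1}R:H\to\cH$ is a unitary isomorphism; hence $V^{a,\cH}=\{0\}$ if and only if $V^\perp=\{0\}$, i.e.\ if and only if $V=H$. I would apply this with $H=H^s(\Omega)$, with $(\tH^{-s}(\Omega),\cI_{-s}^*)$ as the unitary realisation of $(H^s(\Omega))^*$ provided by Theorem~\ref{thm:DualityTheorem} (with $s$ replaced by $-s$), and with $V=H^s_0(\Omega)$, which is a closed subspace of $H^s(\Omega)$ by its definition \eqref{eq:Hs0}. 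This already yields: $H^s_0(\Omega)=H^s(\Omega)$ if and only if $H^s_0(\Omega)^{a,\tH^{-s}(\Omega)}=\{0\}$.

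Next I would invoke the identity
\begin{equation*}
H^s_0(\Omega)^{a,\tH^{-s}(\Omega)}=\tH^{-s}(\Omega)\cap H^{-s}_{\partial\Omega},
\end{equation*}
which was established in the course of proving Lemma~\ref{lem:Hs0Dual}: the annihilator of $H^s_0(\Omega)$ equals that of its spanning set $\scrD(\Omega)|_\Omega$; for $\varphi\in\scrD(\Omega)$ the extension of $\varphi|_\Omega$ by $\varphi$ itself reduces the relevant duality pairing to the distributional action $v(\varphi)$, so $v\in\tH^{-s}(\Omega)$ annihilates $\scrD(\Omega)|_\Omega$ exactly when $\supp v\subset\Omega^c$; and since $v\in\tH^{-s}(\Omega)$ forces $\supp v\subset\overline\Omega$, the intersection with $H^{-s}_{\Omega^c}$ coincides with the intersection with $H^{-s}_{\partial\Omega}$. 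Chaining this with the previous paragraph gives the asserted equivalence.

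There is no real obstacle here, since both ingredients are already available; the only point deserving a little attention is bookkeeping — making sure that the realisation of $(H^s(\Omega))^*$ used to define ``annihilator'' is genuinely $\tH^{-s}(\Omega)$ with the pairing inherited from $\tH^{-s}(\Omega)\times H^s(\Omega)$, so that ``annihilator trivial'', ``orthogonal complement trivial'' and ``$H^s_0(\Omega)=H^s(\Omega)$'' line up, and that this is exactly the pairing with respect to which the annihilator was computed in the proof of Lemma~\ref{lem:Hs0Dual}.
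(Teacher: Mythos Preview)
Your proposal is correct and follows essentially the same approach as the paper: both arguments combine Theorem~\ref{thm:DualityTheorem} with the annihilator computation from (the proof of) Lemma~\ref{lem:Hs0Dual}. The only cosmetic difference is that the paper phrases the conclusion as ``the dual realisations coincide iff $(\tH^{-s}(\Omega)\cap H^{-s}_{\partial\Omega})^{\perp,\tH^{-s}(\Omega)}=\tH^{-s}(\Omega)$'', whereas you go straight to ``$V=H$ iff $V^{a,\cH}=\{0\}$''; these are the same statement.
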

\begin{proof}
This follows from Theorem \ref{thm:DualityTheorem} and Lemma \ref{lem:Hs0Dual}, which together imply that, by duality, $H^s_0(\Omega)=H^s(\Omega)$ if and only if
$(\tH^{-s}(\Omega) \cap H^{-s}_{\partial\Omega})^{\perp, \tH^{-s}(\Omega)}=\tH^{-s}(\Omega)$, which holds if and only if $\tH^{-s}(\Omega) \cap H^{-s}_{\partial\Omega} =\{0\}$.
\end{proof}

\begin{cor}
\label{cor:Hs0HsEqual}
Let $\Omega\subset \R^n$ be non-empty, open and different from $\R^n$ itself, and let $s\in\R$.
\begin{enumerate}[(i)]
\item \label{lem:Hs0Hs_monot}
If $H^s_0(\Omega)=H^s(\Omega)$ then $H_0^{t}(\Omega)=H^{t}(\Omega)$ for all $t<s$.
\item \label{f3}
If $s\leq 0$ then $H^s_0(\Omega)=H^s(\Omega)$. %
\item \label{a3}
If $\partial\Omega$ is $(-s)$-null then $H^s_0(\Omega)=H^s(\Omega)$.
\item \label{g3}
If $s>n/2$, %
then $H^s_0(\Omega)\subsetneqq H^s(\Omega)$. %
\item \label{b3}
For $0<s<n/2$, if $\dimH{\partial\Omega}<n-2s$ then $H^s_0(\Omega)=H^s(\Omega)$.
\item \label{c3}
If $\tH^{-s}(\Omega)=H^{-s}_{\overline\Omega}$ (e.g. if $\Omega$ is $C^0$) then $H^s_0(\Omega)=H^s(\Omega)$ if and only if $\partial\Omega$ is $(-s)$-null.
\item \label{d3}
If $\Omega$ is $C^0$ then $H^s_0(\Omega)\subsetneqq H^s(\Omega)$ for $s>1/2$.
\item \label{h3}
If $\Omega$ is $C^{0,\alpha}$ for some $0<\alpha<1$ then $H^s_0(\Omega)=H^s(\Omega)$ for $s<\alpha/2$.
\item \label{e3}
If $\Omega$ is Lipschitz then $H^s_0(\Omega)=H^s(\Omega)$ if and only if $s\leq 1/2$.
\end{enumerate}
\end{cor}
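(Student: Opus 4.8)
The plan is to derive every assertion of the corollary from the duality characterisation of Lemma~\ref{lem:Hs0HsEquiv}. Write $W_s:=\tH^{-s}(\Omega)\cap H^{-s}_{\partial\Omega}$, a closed subspace of $H^{-s}(\R^n)$; the lemma says $H^s_0(\Omega)=H^s(\Omega)$ if and only if $W_s=\{0\}$, so each part reduces to deciding whether this intersection is trivial. Note that $\partial\Omega\ne\emptyset$, since $\Omega$ is open and $\Omega\ne\R^n$. The ingredients to be combined are: the trivial equivalence ``$H^{-s}_{\partial\Omega}=\{0\}$ $\iff$ $\partial\Omega$ is $(-s)$-null''; the inclusion $\tH^t(\Omega)\subset\cirHt(\Omega)$ for $t\ge0$ from \eqref{eqn:inclusions}; the identity $\tH^t(\Omega)=H^t_{\overline\Omega}$ for $C^0$ open sets from Lemma~\ref{lem:sob_equiv}; and the $s$-nullity facts collected in Lemma~\ref{lem:polarity}. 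Part~\rf{lem:Hs0Hs_monot} is then immediate: if $t<s$ then $-t>-s$, so $H^{-t}(\R^n)$ embeds continuously in $H^{-s}(\R^n)$, whence $\tH^{-t}(\Omega)\subset\tH^{-s}(\Omega)$ and $H^{-t}_{\partial\Omega}\subset H^{-s}_{\partial\Omega}$; thus $W_t\subset W_s$, and $W_s=\{0\}$ forces $W_t=\{0\}$.

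Next I would dispatch the ``positive'' cases in which $W_s=\{0\}$ for a structural reason. For \rf{a3}: if $\partial\Omega$ is $(-s)$-null then $H^{-s}_{\partial\Omega}=\{0\}$, so $W_s=\{0\}$. Part~\rf{b3} follows from \rf{a3}, since for $0<s<n/2$ the hypothesis $\dimH\partial\Omega<n-2s$ is precisely $\dimH\partial\Omega<n+2(-s)$ with $-n/2<-s<0$, so Lemma~\ref{lem:polarity}\rf{hh} shows $\partial\Omega$ is $(-s)$-null. For \rf{f3}, put $t:=-s\ge0$; then any $u\in W_s=\tH^t(\Omega)\cap H^t_{\partial\Omega}$ lies in $\cirHt(\Omega)$, hence vanishes a.e.\ in $\Omega^c$, and has $\supp u\subset\partial\Omega$, hence vanishes on the open set $\Omega$. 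Since $H^t(\R^n)\subset L^2(\R^n)$ for $t\ge0$, this forces $u=0$, so $W_s=\{0\}$.

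The $C^0$ cases \rf{c3}, \rf{d3}, \rf{h3}, \rf{e3} all rest on the same observation: when $\Omega$ is $C^0$, Lemma~\ref{lem:sob_equiv} gives $\tH^{-s}(\Omega)=H^{-s}_{\overline\Omega}\supset H^{-s}_{\partial\Omega}$, so $W_s=H^{-s}_{\partial\Omega}$, and therefore $H^s_0(\Omega)=H^s(\Omega)$ \emph{if and only if} $\partial\Omega$ is $(-s)$-null. This is \rf{c3}; reading it with $r:=-s$ through Lemma~\ref{lem:polarity} gives the rest. For \rf{d3}, by Lemma~\ref{lem:polarity}\rf{kk1} (which applies to any $C^0$ open $\Omega\ne\R^n$) $\partial\Omega$ is not $(-s)$-null when $s>1/2$, hence $H^s_0(\Omega)\subsetneqq H^s(\Omega)$ there. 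For \rf{h3}, Lemma~\ref{lem:polarity}\rf{kk2} gives that $\partial\Omega$ is $(-s)$-null when $s<\alpha/2$. For \rf{e3}, Lemma~\ref{lem:polarity}\rf{kk} gives that $\partial\Omega$ is $(-s)$-null exactly when $s\le1/2$, which is the claimed equivalence.

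I expect \rf{g3} to be the only genuinely constructive step, and so the main obstacle: for $s>n/2$ I must exhibit a nonzero element of $W_s$. Fix $\bx_0\in\partial\Omega$. As $-s<-n/2$, \eqref{eq:delta} gives $\delta_{\bx_0}\in H^{-s}(\R^n)$, and $\supp\delta_{\bx_0}=\{\bx_0\}\subset\partial\Omega$, so $\delta_{\bx_0}\in H^{-s}_{\partial\Omega}$; it remains to show $\delta_{\bx_0}\in\tH^{-s}(\Omega)$. Since $\bx_0\in\overline\Omega$, choose $\by_j\in\Omega$ with $\by_j\to\bx_0$. For each $j$, $\delta_{\by_j}\in H^{-s}(\R^n)$ is supported at the point $\by_j$ of the open set $\Omega$, so by the approximation property \eqref{eq:approx} there is $\psi_j\in\scrD(\Omega)$ with $\|\delta_{\by_j}-\psi_j\|_{H^{-s}(\R^n)}<1/j$; hence $\delta_{\by_j}\in\tH^{-s}(\Omega)$. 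Finally, on the Fourier side, $\|\delta_{\by_j}-\delta_{\bx_0}\|_{H^{-s}(\R^n)}^2=(2\pi)^{-n}\int_{\R^n}(1+|\bxi|^2)^{-s}\,|\re^{-\ri\bxi\cdot\by_j}-\re^{-\ri\bxi\cdot\bx_0}|^2\,\rd\bxi$, whose integrand tends to $0$ pointwise and is dominated by $4(2\pi)^{-n}(1+|\bxi|^2)^{-s}$, which is integrable precisely because $s>n/2$; dominated convergence then gives $\delta_{\by_j}\to\delta_{\bx_0}$ in $H^{-s}(\R^n)$, so $\delta_{\bx_0}\in\tH^{-s}(\Omega)$ as $\tH^{-s}(\Omega)$ is closed. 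Thus $W_s\ne\{0\}$ and $H^s_0(\Omega)\subsetneqq H^s(\Omega)$. Everything else in the corollary is a direct application of Lemma~\ref{lem:Hs0HsEquiv} together with Lemmas~\ref{lem:sob_equiv} and~\ref{lem:polarity}.
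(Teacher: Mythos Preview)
Your proof is correct and follows essentially the same strategy as the paper: every part is deduced from the criterion $H^s_0(\Omega)=H^s(\Omega)\iff \tH^{-s}(\Omega)\cap H^{-s}_{\partial\Omega}=\{0\}$ of Lemma~\ref{lem:Hs0HsEquiv}, combined with Lemmas~\ref{lem:sob_equiv} and~\ref{lem:polarity}. The arguments for parts \rf{lem:Hs0Hs_monot}, \rf{f3}, \rf{a3}, \rf{b3}, \rf{c3}--\rf{e3} match the paper's almost verbatim.

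The one substantive difference is in \rf{g3}. Both you and the paper place $\delta_{\bx_0}$ in $\tH^{-s}(\Omega)\cap H^{-s}_{\partial\Omega}$ by approximating with $\delta_{\by_j}$, $\by_j\in\Omega$, $\by_j\to\bx_0$. The paper shows only \emph{weak} convergence $\delta_{\by_j}\rightharpoonup\delta_{\bx_0}$ in $H^{-s}(\R^n)$ (pairing against $\phi\in H^s(\R^n)\subset C^0(\R^n)$ via Sobolev embedding) and then invokes the fact that the closed subspace $\tH^{-s}(\Omega)$ is weakly closed. You instead prove \emph{norm} convergence directly on the Fourier side by dominated convergence, using that $(1+|\bxi|^2)^{-s}\in L^1(\R^n)$ precisely when $s>n/2$. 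Your route is more elementary and self-contained (no appeal to weak topology results), while the paper's route avoids the explicit Fourier computation; both are short and valid.
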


\begin{proof}
Our proofs all use the characterization provided by Lemma \ref{lem:Hs0HsEquiv}.
\rf{lem:Hs0Hs_monot} holds because, for $t<s$, $\tH^{-t}(\Omega)\subset \tH^{-s}(\Omega)$ and $H_{\partial\Omega}^{-t}\subset H_{\partial\Omega}^{-s}$.
\rf{f3} holds because, for $s\leq 0$, $\tH^{-s}(\Omega)\cap H^{-s}_{\partial \Omega} \subset \cirH{}^{-s}(\Omega)\cap H^{-s}_{\partial \Omega}= \{0\}$.
\rf{a3} is immediate from Lemma \ref{lem:Hs0HsEquiv}.
To prove \rf{g3}, we first note that, for any $\bx_0\in\deO$, there exists a sequence of points $\{\by_j\}_{j\in\N}\subset\Omega$ such that $\lim_{j\to\infty}\by_j=\bx_0$, and the corresponding Dirac delta functions satisfy $\delta_{\bx_0}\in H^{-s}_\deO$ and  $\delta_{\by_j}\in H^{-s}_{\{\by_j\}}\subset\tH^{-s}(\Omega)$, by \eqref{eq:delta} and \eqref{eq:approx}.
Then, since $\tH^{-s}(\Omega)\subset H^{-s}(\R^n)$ is closed, to show that $\tH^{-s}(\Omega)\cap H^{-s}_\deO\neq \{0\}$ it suffices to prove that  $\{\delta_{\by_j}\}_{j\in\N}$ converges to $\delta_{\bx_0}$ in $H^{-s}(\R^n)$.
Recall that the dual space of $H^{-s}(\R^n)$ is realised as $H^s(\R^n)$, which (since $s>n/2$) is a subspace of $C^0(\R^n)$, the space of %
continuous functions (see, e.g.\ \cite[Theorem 3.26]{McLean}). Hence the duality pairing \eqref{dualequiv} gives $\langle \delta_{\bx_0}-\delta_{\by_j}, \phi\rangle_s=\overline{\phi(\bx_0)}-\overline{\phi(\by_j)}\xrightarrow{j\to\infty}0$ for all $\phi\in H^s(\R^n)\subset C^0(\R^n)$, i.e.\ $\{\delta_{\by_j}\}_{j\in\N}$ converges to $\delta_{\bx_0}$ weakly in $H^{-s}(\R^n)$.
But by \cite[Theorem 3.7]{Brezis}, $\tH^{-s}(\Omega)$ is weakly closed, so $\delta_{\bx_0}\in \tH^{-s}(\Omega)$ as required. %
\rf{b3} follows from \rf{a3} and Lemma \ref{lem:polarity}\rf{hh}. For \rf{c3}, note that if $\tH^{-s}(\Omega)=H^{-s}_{\overline\Omega}$ then $\tH^{-s}(\Omega)\cap H^{-s}_{\partial \Omega}
= H^{-s}_{\partial \Omega}$.
\rf{d3}--\rf{e3} follow from \rf{c3}, Lemma~\ref{lem:sob_equiv}, and Lemma~\ref{lem:polarity}\rf{kk1}--\rf{kk}.
\end{proof}

\begin{rem}
\label{rem:HsHs0}
Parts \rf{lem:Hs0Hs_monot}, \rf{f3} and \rf{g3} of Corollary \ref{cor:Hs0HsEqual} imply that for any non-empty open $\Omega\subsetneqq\R^n$, there exists
$0\leq s_0(\Omega)\leq n/2$
such that
$$
H_0^{s_-}(\Omega)= H^{s_-}(\Omega)\quad\text{and}\quad
H_0^{s_+}(\Omega)\subsetneqq H^{s_+}(\Omega)
\qquad \text{for all }\;  s_-<s_0(\Omega)<s_+.
$$
We can summarise most of the remaining results in Corollary \ref{cor:Hs0HsEqual} as follows:
\begin{itemize}
\item $s_0(\Omega)\ge \sup\{s: \deO $ is $(-s)$-null$\}$, with equality if $\Omega$ is $C^0$.
\item If $\Omega$ is $C^0$, then $0\le s_0(\Omega)\le 1/2$.
\item If $\Omega$ is $C^{0,\alpha}$ for some $0<\alpha<1$, then $\alpha/2\le s_0(\Omega)\le 1/2$.
\item If $\Omega$ is Lipschitz, then $s_0(\Omega)=1/2$.
\end{itemize}
Moreover, the above bounds on $s_0(\Omega)$ can all be achieved: by Corollary \ref{cor:Hs0HsEqual}(vi) for the first two cases, (iii) and (iv) for the third case:%
\begin{itemize}
\item For $2\le n\in\N$ the bounded $C^0$ open set of \cite[Lemma 4.1(vi)]{HewMoi:15}
satisfies $s_0(\Omega)=0$.
\item For $2\le n\in\N$ and $0<\alpha<1$, the bounded $C^{0,\alpha}$ open set
of \cite[Lemma 4.1(v)]{HewMoi:15}
satisfies $s_0(\Omega)=\alpha/2$.
\item If $\Omega=\R^n\setminus\{\mathbf0\}$, $s_0(\Omega)=n/2$.
\end{itemize}
\end{rem}

To put the results of this section in context we give a brief comparison with the results presented by Caetano in \cite{Ca:00}, where the question of when $H^s_0(\Omega)=H^s(\Omega)$ is considered within the more general context of Besov--Triebel--Lizorkin spaces.
Caetano's main positive result \cite[Proposition 2.2]{Ca:00} is that if $0<s<n/2$, $\Omega$ is bounded, and $\overline{\dimB}\partial\Omega<n-2s$, then $H^s_0(\Omega)=H^s(\Omega)$ (here $\overline{\dimB}$ denotes the upper box dimension, cf.\ \cite[\S3]{Fal}).
Our Corollary \ref{cor:Hs0HsEqual}\rf{b3} sharpens this result, replacing $\overline{\dimB}$ with $\dimH$
(note that $\dimH(E)\leq \overline{\dimB}(E)$ for all bounded $E\subset\R^n$, cf.\ \cite[Proposition 3.4]{Fal})
and removing the boundedness assumption.
Caetano's main negative result \cite[Proposition 3.7]{Ca:00} says that if $0<s<n/2$, $\Omega$ is ``interior regular'', $\partial\Omega$ is a $d$-set (see \eqref{eq:dSet}) for some $d>n-2s$, then $H^s_0(\Omega)\subsetneqq H^s(\Omega)$. Here ``interior regular'' is a smoothness assumption that, in particular, excludes outward cusps in $\partial\Omega$. Precisely, it means \cite[Definition 3.2]{Ca:00} that there exists $C>0$ such that for all $\bx\in\partial\Omega$ and all cubes $Q$ centred at $\bx$ with side length $\leq 1$, $m(\Omega\cap Q)\geq C m(Q)$.
This result of Caetano's is similar to our Corollary \ref{cor:Hs0HsEqual}\rf{c3}, which, when combined with our Lemma \ref{lem:polarity}\rf{gg}, implies that if $0<s<n/2$ and $\tH^{-s}(\Omega)=H^{-s}_{\overline\Omega}$ (e.g.\ if $\Omega$ is $C^0$) with $\dimH\deO>n-2s$, then $H^s_0(\Omega)\subsetneqq H^s(\Omega)$.
In some respects our result is more general than \cite[Proposition 3.7]{Ca:00} because we allow cusp domains and we do not require a uniform Hausdorff dimension.
However, it is difficult to make a definitive comparison because we do not know of a characterisation of when $\tH^{-s}(\Omega)=H^{-s}_{\overline\Omega}$ for interior regular $\Omega$. %
Certainly, not every interior regular set whose boundary is a $d$-set belongs to the class of sets for which we can prove $\tH^{-s}(\Omega)=H^{-s}_{\overline\Omega}$; a concrete example is the Koch snowflake \cite[Figure~0.2]{Fal}.

\subsection{Some properties of the restriction operator \texorpdfstring{$|_\Omega: H^s(\R^n)\to H^s(\Omega)$}{}}
\label{subsec:restriction}
In \S\ref{subsec:3spaces} we have studied the relationship between the spaces $\tH^s(\Omega)$, $\cirHs(\Omega)$, and $H^s_{\overline\Omega}\subset H^s(\R^n)$, whose elements are distributions on $\R^n$, and in \S\ref{subsec:Hs0vsHs} the relationship between $H^s(\Omega)$ and $H^s_0(\Omega)$, whose elements are distributions on $\Omega$. To complete the picture we explore in this section the connections between these two types of spaces, which amounts to studying mapping properties of the restriction operator $|_\Omega:H^s(\R^n)\to H^s(\Omega)$.
These properties, contained in the following lemma, are rather straightforward consequences of the results obtained earlier in the paper and classical results such as \cite[Theorem 3.33]{McLean}, but for the sake of brevity we relegate the proofs to \cite{Hs0paper}.

\begin{lem}
\label{lem:RestrictionCollection}
Let $\Omega\subset\R^n$ be non-empty and open, and $s\in\R$.

\begin{enumerate}[(i)]
\item
$|_\Omega:H^s(\R^n)\to H^s(\Omega)$ is continuous with norm one;
\item
$|_\Omega:(H^s_{\Omega^c})^\perp\to H^s(\Omega)$ is a unitary isomorphism;%
\item If $\Omega$ is a finite union of disjoint Lipschitz open sets, $\partial \Omega$ is bounded, and $s> -1/2$, $s\not\in \{1/2,3/2,\ldots\}$, then $|_\Omega:\tH^s(\Omega) \to H^s_0(\Omega)$ is an isomorphism;
\item \label{b4}
$|_\Omega:H^s_{\overline \Omega}\to H^s(\Omega)$ is injective if and only if $\partial\Omega$ is $s$-null;
in particular,
\begin{itemize}
\item
$|_\Omega:H^s_{\overline \Omega}\to H^s(\Omega)$ is always injective for $s>n/2$ and never injective for $s<-n/2$;
\item if $\Omega$ is Lipschitz then $|_\Omega:\tH^s(\Omega)=H^s_{\overline \Omega}\to H^s(\Omega)$ is injective if and only if $s\ge -1/2$;
\item for every $-1/2\leq s_*\leq 0$ there exists a $C^0$ open set $\Omega$ for which $|_\Omega:\tH^s(\Omega)=H^s_{\overline \Omega}\to H^s(\Omega)$ is injective for all $s>s_*$ and not injective for all $s<s_*$;
\end{itemize}
\item \label{e4}
For $s\geq 0$, $|_\Omega:\cirHs(\Omega) \to H^s(\Omega)$ is injective; if $s\in\N_0$ then it is a unitary isomorphism onto its image in $H^s(\Omega)$;
\item \label{f4}
For $s\geq 0$, $|_\Omega:\tH^s(\Omega)\to H^s_0(\Omega)$ is injective and has dense image; if $s\in\N_0$ then it is a unitary isomorphism;
\item \label{a5}
$|_\Omega:\tH^s(\Omega)\to H^s(\Omega)$ is bijective if and only if $|_\Omega:\tH^{-s}(\Omega)\to H^{-s}(\Omega)$ is bijective;
\item \label{b5} $|_\Omega:\tH^{-s}(\Omega)\to H^{-s}(\Omega)$ is injective if and only if $|_\Omega:\tH^{s}(\Omega)\to H^{s}(\Omega)$ has dense image; i.e.\ if and only if $H^s_0(\Omega)=H^s(\Omega)$;
\item \label{bbb5} The following are equivalent:
\begin{itemize}
\item
$|_\Omega:\tH^s(\Omega)\to H^s_0(\Omega)$ is a unitary isomorphism;
\item
$\big\|\phi|_\Omega\big\|_{H^s(\Omega)} = \|\phi\|_{H^s(\R^d)}$ for all $\phi\in \scrD(\Omega)$;
\item
$\scrD(\Omega) \subset (H^s_{\Omega^c})^\perp$;
\end{itemize}
\item If $\Omega$ is bounded, or $\Omega^c$ is bounded with non-empty interior, then the three equivalent statements in \rf{bbb5} hold if and only if $s\in\N_0$;
\item
If the complement of $\Omega$ is $s$-null, then $|_\Omega:\tH^s(\Omega)\to H^s_0(\Omega)$ is a unitary isomorphism.
\end{enumerate}
\end{lem}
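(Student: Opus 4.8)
The plan is to observe that the $s$-nullity hypothesis makes the subspace $H^s_{\Omega^c}$ trivial, and then to read off the conclusion directly from parts (ii) and \rf{bbb5} of the lemma; there is no real work to do beyond this observation.

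First I would note that, since $\Omega^c$ is closed, the trivial remark recorded just after the definition of $s$-nullity in \S\ref{subsec:Polarity} says that $\Omega^c$ is $s$-null exactly when $H^s_{\Omega^c}=\{0\}$. The hypothesis therefore gives $H^s_{\Omega^c}=\{0\}$, whence $(H^s_{\Omega^c})^\perp = H^s(\R^n)$. In particular $\scrD(\Omega)\subset H^s(\R^n) = (H^s_{\Omega^c})^\perp$, which is precisely the third of the three equivalent statements in part \rf{bbb5}. By the equivalence established there, the first statement holds as well, i.e.\ $|_\Omega:\tH^s(\Omega)\to H^s_0(\Omega)$ is a unitary isomorphism, as claimed.

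Alternatively (and, I would mention, just as quickly) one can argue directly: the identity $(H^s_{\Omega^c})^\perp = H^s(\R^n)$ together with part (ii) shows that $|_\Omega:H^s(\R^n)\to H^s(\Omega)$ is a unitary isomorphism; being in particular a homeomorphism, it maps the closure of $\scrD(\Omega)$ in $H^s(\R^n)$ onto the closure of $\scrD(\Omega)|_\Omega$ in $H^s(\Omega)$, that is, it restricts to a unitary isomorphism from $\tH^s(\Omega)$ onto $H^s_0(\Omega)$.

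I do not anticipate any genuine obstacle: once one spots that $s$-nullity of $\Omega^c$ kills $H^s_{\Omega^c}$, the statement is an immediate corollary of results already proved. The only minor point to check is the elementary bookkeeping that $|_\Omega$ carries $\scrD(\Omega)$ exactly onto the generating set $\scrD(\Omega)|_\Omega$ of $H^s_0(\Omega)$ and, being unitary and hence a homeomorphism, preserves the relevant closures.
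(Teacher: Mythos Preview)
Your argument is correct and is exactly the sort of deduction the paper has in mind: it explicitly says these properties are ``rather straightforward consequences of the results obtained earlier in the paper'' and relegates the proofs to \cite{Hs0paper}, so there is no in-paper proof to compare against. Both of your routes---invoking part~\rf{bbb5} via the inclusion $\scrD(\Omega)\subset (H^s_{\Omega^c})^\perp$, or invoking part~(ii) and passing to closures---are valid and equally short.
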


\begin{rem}\label{rem:LionsMagenes}
A space often used in applications is the Lions--Magenes space $H^s_{00}\OO$, defined as the interpolation space between $H^m_0\OO$ and $H^{m+1}_0\OO$, where $m\in\N_0$ and $m\le s<m+1$, see e.g.\ \cite[Chapter~1, Theorem~11.7]{LiMaI}
(the choice of interpolation method, e.g.\ the $K$-, the $J$- or the complex method, does not affect the result, as long it delivers a Hilbert space, see \cite[\S3.3]{InterpolationCWHM}).

Since $|_\Omega:\tH^m(\Omega)\to H^m_0(\Omega)$ is an isomorphism for all $m\in\N_0$ by Lemma \ref{lem:RestrictionCollection}\rf{f4} above, $H^s_{00}\OO$ is the image under the restriction operator of the space obtained from the interpolation of $\tH^m\OO$ and $\tH^{m+1}\OO$. %
Thus by \cite[Corollary~4.9]{InterpolationCWHM}, $H^s_{00}\OO$ is a subspace (not necessarily closed) of $H^s_0\OO$, for all $s\ge0$ and all open $\Omega$.

Furthermore, if $\Omega$ is Lipschitz and $\deO$ is bounded, \cite[Corollary~4.10]{InterpolationCWHM} ensures that $\{\tH^s\OO: s\in\R\}$ is an interpolation scale,
hence in this case we can characterise the Lions--Magenes space as $H^s_{00}\OO=\tH^s\OO|_\Omega$.
In particular, by \cite[Theorem 3.33]{McLean}, this implies that $H^s_{00}\OO=H^s_0\OO$ if $s\notin\{1/2,3/2,\ldots\}$.
This observation extends \cite[Chapter~1, Theorem~11.7]{LiMaI}, which was stated for $C^\infty$ bounded~$\Omega$.

That $H^{m+1/2}_{00}\OO\subsetneqq H^{m+1/2}_0\OO$ for $m\in\N_0$ was proved for all $C^\infty$ bounded $\Omega$ in \cite[Chapter~1, Theorem~11.7]{LiMaI}.
For general Lipschitz bounded $\Omega$, $H^{1/2}_{00}\OO\subsetneqq H^{1/2}_0\OO$ because the constant function $1$ belongs to the difference between the two spaces, as shown in \cite[p.~5]{NOS13}.
\end{rem}

\subsection{Sobolev spaces on sequences of subsets of \texorpdfstring{$\R^n$}{Rn}}
\label{subsec:Seqs+Eqs}

We showed in \S\ref{subsec:3spaces} that the Sobolev spaces $\tH^s(\Omega)$, $\cirHs(\Omega)$ (for $s\geq0$) and $H^s_{\overline\Omega}$ are in general distinct. These spaces arise naturally in the study of Fredholm integral equations and elliptic PDEs on rough (non-Lipschitz) open sets (a concrete example is the study of BIEs on screens, see \S\ref{sec:BIE} and \cite{CoercScreen}). When formulating such problems using a variational formulation, one must take care to choose the correct Sobolev space setting to ensure the physically correct solution.

Any arbitrarily ``rough'' open set $\Omega$ can be represented as a nested union of countably many ``smoother'' (e.g.\ Lipschitz) open sets $\{\Omega_j\}_{j=1}^\infty$ \cite[p.317]{Kellogg}. One can also consider closed sets $F$ that are nested intersections of a collection of closed sets $\{F_j\}_{j=1}^\infty$.
Significantly, many well-known fractal sets and sets with fractal boundary are constructed in this manner as a limit of prefractals.
We will apply the following propositions that consider such constructions to BIEs on sequences of prefractal sets in \S\ref{sec:BIE} below. Precisely, we will use these results together with those from \S\ref{subsec:ApproxVar} to deduce the correct fractal limit of the sequence of solutions to the prefractal problems, and the correct variational formulation and Sobolev space setting for the limiting solution.

\begin{prop}
\label{prop:nestedopen}
Suppose that $\Omega=\bigcup_{j=1}^\infty \Omega_j$, where $\{\Omega_j\}_{j=1}^\infty$ is a nested sequence of non-empty open subsets of $\R^n$ satisfying $\Omega_j\subset\Omega_{j+1}$ for $j=1,2,\ldots$.
Then $\Omega$ is open and
\begin{align}
\label{eq:HtildeUnion}
\tH^s(\Omega)
=\overline{\bigcup_{j=1}^\infty \tH^s(\Omega_j)}.
\end{align}
\end{prop}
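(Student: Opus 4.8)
The plan is to prove the equality \eqref{eq:HtildeUnion} by establishing the two inclusions separately, after first noting that $\Omega$ is open as a union of open sets.

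For the inclusion ``$\supseteq$'' I would argue by monotonicity: since $\Omega_j\subset\Omega_{j+1}\subset\Omega$ we have $\scrD(\Omega_j)\subset\scrD(\Omega_{j+1})\subset\scrD(\Omega)$, and taking closures in the $H^s(\R^n)$-norm gives the nested chain of closed subspaces $\tH^s(\Omega_j)\subset\tH^s(\Omega_{j+1})\subset\tH^s(\Omega)$. Hence $\bigcup_{j=1}^\infty\tH^s(\Omega_j)$ is a linear subspace of $\tH^s(\Omega)$, and since the latter is closed in $H^s(\R^n)$ its closure $\overline{\bigcup_{j=1}^\infty\tH^s(\Omega_j)}$ is contained in $\tH^s(\Omega)$.

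For the inclusion ``$\subseteq$'', since $\tH^s(\Omega)=\overline{\scrD(\Omega)}^{H^s(\R^n)}$ by definition, it suffices to show $\scrD(\Omega)\subset\bigcup_{j=1}^\infty\tH^s(\Omega_j)$ and then take closures. So I would fix $\phi\in\scrD(\Omega)$: its support is a compact subset of $\Omega=\bigcup_{j=1}^\infty\Omega_j$, so the increasing open cover $\{\Omega_j\}_{j=1}^\infty$ of $\supp\phi$ admits a finite subcover, which by nestedness is a single set $\Omega_J$ for $J$ large enough; then $\supp\phi\subset\Omega_J$, so $\phi\in\scrD(\Omega_J)\subset\tH^s(\Omega_J)$. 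I do not expect any real obstacle here: the only step with content is this reverse inclusion, and the nestedness of the sequence collapses it to the elementary fact that a test function on $\Omega$ has compact support and therefore already lies in $\scrD(\Omega_J)$ for some $J$. Combining the two inclusions yields \eqref{eq:HtildeUnion}.
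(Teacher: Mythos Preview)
Your proof is correct and follows essentially the same approach as the paper: both arguments hinge on the key observation that any $\phi\in\scrD(\Omega)$ has compact support contained in some single $\Omega_J$ by nestedness and compactness, giving $\scrD(\Omega)=\bigcup_{j=1}^\infty\scrD(\Omega_j)$, from which the result follows by taking closures. The paper packages this slightly differently (proving the equality of test-function spaces first, then writing a single chain of equalities of closures), but the mathematical content is identical.
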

\begin{proof}
We will show below that
\begin{align}
\label{eq:DUnion}
\scrD(\Omega) = \bigcup_{j=1}^\infty \scrD(\Omega_j).
\end{align}
Then \rf{eq:HtildeUnion} follows easily from \rf{eq:DUnion} because
\begin{align*}%
\tH^s(\Omega)
= \overline{\scrD(\Omega)}
= \overline{\bigcup_{j=1}^\infty \scrD(\Omega_j)}
= \overline{\bigcup_{j=1}^\infty \overline{\scrD(\Omega_j)}}
=\overline{\bigcup_{j=1}^\infty \tH^s(\Omega_j)}.
\end{align*}
To prove \rf{eq:DUnion}, we first note that the inclusion $\bigcup_{j=1}^\infty \scrD(\Omega_j)\subset  \scrD(\Omega)$ is obvious.
To show the reverse inclusion, let $\phi\in\scrD(\Omega)$.
We have to prove that $\phi\in\scrD(\Omega_j)$ for some $j\in \N$.
Denote $K$ the support of $\phi$; then $K$ is a compact subset of $\Omega$, thus $\{\Omega_j\}_{j=1}^\infty$ is an open cover of $K$.
As $K$ is compact there exists a finite subcover $\{\Omega_j\}_{j=j_1,\ldots,j_\ell}$. Thus $K\subset\Omega_{j_\ell}$ and $\phi\in \scrD(\Omega_{j_\ell})$.
\end{proof}

It is easy to see that the analogous result, with $\tH^s(\Omega)$ replaced by $\cirHs(\Omega)$ (with $s\geq 0$), or with $\tH^s(\Omega)$ replaced by $H^s_{\overline{\Omega}}$, does not hold in general. Indeed, as a counterexample we can take any $\Omega$ which is a union of nested $C^0$ open sets, but for which $\tH^s(\Omega)\neq \cirHs(\Omega)$. Then the above result and \eqref{eqn:inclusions} gives
\begin{align*}%
\overline{\bigcup_{j=1}^\infty \cirHs(\Omega_j)}
=\overline{\bigcup_{j=1}^\infty \tH^s(\Omega_j)}
=\tH^s(\Omega)
\subsetneqq \cirHs(\Omega) \subset H^s_{\overline{\Omega}}.
\end{align*}
A concrete example is $\Omega=(-1,0)\cup(0,1)\subset\R$ and $\Omega_j=(-1,-1/j)\cup(1/j,1)$, with $s>1/2$, for which $\tH^s(\Omega)\neq \cirHs(\Omega)=H^s_{\overline\Omega}$ by Lemma \ref{lem:CircleSpace}\rf{d1}, Lemma \ref{lem:equalityNullity}\rf{aaa} and Lemma \ref{lem:polarity}\rf{jj}.

The following is a related and obvious result.
\begin{prop}
\label{prop:nestedclosed}
Suppose that $F=\bigcap_{j\in\scrJ}  F_j$,
where $\scrJ$ is an index set and $\{F_j\}_{j\in\scrJ}$ is a collection of closed subsets of $\R^n$. Then $F$ is closed and
\begin{align*}%
H^s_F=\bigcap_{j\in\scrJ} H^s_{F_j}.
\end{align*}
\end{prop}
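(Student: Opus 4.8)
\textbf{Proof proposal for Proposition \ref{prop:nestedclosed}.}

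The plan is to establish the two set inclusions that together yield the claimed equality. The statement that $F=\bigcap_{j\in\scrJ}F_j$ is closed is immediate, since an arbitrary intersection of closed sets is closed. For the main identity, recall from \eqref{HsSubFDef} that $H^s_F=\{u\in H^s(\R^n):\supp(u)\subset F\}$, and similarly for each $H^s_{F_j}$.

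First I would prove the inclusion $H^s_F\subset\bigcap_{j\in\scrJ}H^s_{F_j}$. Let $u\in H^s_F$. Then $u\in H^s(\R^n)$ and $\supp(u)\subset F=\bigcap_{j\in\scrJ}F_j\subset F_j$ for every $j\in\scrJ$, so $u\in H^s_{F_j}$ for each $j$, hence $u\in\bigcap_{j\in\scrJ}H^s_{F_j}$. Conversely, to prove $\bigcap_{j\in\scrJ}H^s_{F_j}\subset H^s_F$, let $u\in\bigcap_{j\in\scrJ}H^s_{F_j}$. Then $u\in H^s(\R^n)$, and $\supp(u)\subset F_j$ for every $j\in\scrJ$; taking the intersection over $j$ gives $\supp(u)\subset\bigcap_{j\in\scrJ}F_j=F$, so $u\in H^s_F$. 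Combining the two inclusions gives $H^s_F=\bigcap_{j\in\scrJ}H^s_{F_j}$.

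There is no real obstacle here: the result is, as the paper says, ``obvious'', because it only uses the elementary fact that the support of a distribution is contained in a closed set $E$ if and only if it is contained in every closed set whose intersection is $E$, and in turn this is just the statement that $\supp(u)\subset\bigcap_j F_j$ iff $\supp(u)\subset F_j$ for all $j$. The only point worth a line is that $\supp(u)$, being a well-defined closed subset of $\R^n$ for $u\in H^s(\R^n)\subset\scrS^*(\R^n)$, makes the containments $\supp(u)\subset F_j$ meaningful for each $j$, so that no measure-theoretic subtlety arises. Hence one may simply write the two-inclusion argument above, noting that it holds verbatim for any index set $\scrJ$, finite, countable, or uncountable.
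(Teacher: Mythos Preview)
Your proof is correct and is exactly the straightforward two-inclusion argument one would expect; the paper itself gives no proof, simply labelling the result ``obvious'', and your argument is the natural unpacking of that.
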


We will apply both the above results in \S\ref{sec:BIE} on BIEs. The following remark makes clear that Proposition \ref{prop:nestedopen} applies also to the FEM approximation of elliptic PDEs on domains with fractal boundaries.
\begin{rem}\label{rem:FEM}
Combining the abstract theory developed in \S\ref{subsec:ApproxVar} with Proposition~\ref{prop:nestedopen} allows us to prove the convergence of Galerkin methods on open sets with fractal boundaries.
In particular, we can easily identify which limit a sequence of Galerkin approximations converges to.
Precisely, let $\Omega=\bigcup_{j=1}^\infty \Omega_j$, where $(\Omega_j)_{j=1}^\infty$ is a  sequence of non-empty open subsets of $\R^n$ satisfying $\Omega_j\subset\Omega_{j+1}$ for $j\in\N$.
Fix $s\in\R$. For each $j\in\N$, define a sequence of nested closed spaces $V_{j,k}\subset V_{j,k+1}\subset \tH^s(\Omega_j)$, $k\in\N$, such that
$\tH^s(\Omega_j)=\overline{\bigcup_{k=1}^\infty V_{j,k}}$, and such that
 the sequences are a refinement of each other, i.e.\ $V_{j,k}\subset V_{j+1,k}$. Suppose that
$a(\cdot,\cdot)$ is a continuous and coercive sesquilinear form on some space $H$ satisfying $\tH^s\OO\subset H \subset H^s\Rn$. Then, for all $f\in H^{-s}\Rn$ the discrete and continuous variational problems: find $u_{V_{j,k}}\in V_{j,k}$ and $u_{\tH^s\OO}\in \tH^s\OO$ such that
\begin{align} \label{eq:vps}
a(u_{V_{j,k}},v)=\langle f,v\rangle_s\quad\forall v\in V_{j,k},\qquad
a(u_{\tH^s\OO},v')=\langle f,v'\rangle_s\quad\forall v'\in \tH^s\OO,
\end{align}
have exactly one solution,  and moreover the sequence
$(u_{V_{j,j}})_{j=1}^\infty$ converges to $u_{\tH^s\OO}$ in the $H^s\Rn$ norm, because the sequence $(V_{j,j})_{j=1}^\infty$ is dense in $\tH^s\OO$. (Here we use Proposition \ref{prop:nestedopen} and \eqref{eq:V-conver}.)

As a concrete example, take $\Omega\subset\R^2$ to be the Koch snowflake \cite[Figure~0.2]{Fal}, $\Omega_j$ the prefractal set of level $j$ (which is a Lipschitz polygon with $3\cdot 4^{j-1}$ sides),
$s=1$ and $a(u,v)=\int_{B_R}\nabla u\cdot \nabla \overline v\rd \bx$ the sesquilinear form for the Laplace equation, which is continuous and coercive on $\tH^s(B_R)$, where $B_R$ is any open ball containing $\overline\Omega$.
The $V_{j,k}$ spaces can be taken as nested sequences of standard finite element spaces defined on the polygonal prefractals.
Then the solutions $u_{V_{j,j}}\in V_{j,j}$ of the discrete variational problems, which are easily computable with a finite element code, converge in the $H^1(\R^2)$ norm to $u_{\tH^1\OO}$, the solution to the variational problem on the right hand side in \eqref{eq:vps}.
\end{rem}

\section{Boundary integral equations on fractal screens} \label{sec:BIE}

This section contains the paper's major application, which has motivated much of the earlier theoretical analysis. The problem we consider is itself motivated by the widespread use in telecommunications of electromagnetic antennas that are designed as good approximations to fractal sets. The idea of this form of antenna design, realised in many applications, is that the self-similar, multi-scale fractal structure leads naturally to good and uniform performance over a wide range of wavelengths, so that the antenna has effective wide band performance \cite[\S18.4]{Fal}. Many of the designs proposed take the form of thin planar devices that are approximations to bounded fractal subsets of the plane, for example the Sierpinski triangle \cite{PBaRomPouCar:98} and sets built using Cantor-set-type constructions \cite{SriRanKri:11}.
These and many other fractals sets $F$ are constructed by an iterative procedure: a sequence of ``regular'' closed sets $F_1\supset F_2 \supset \ldots$ (which we refer to as ``prefractals'') is constructed recursively, with the fractal set $F$ defined as the limit $F=\cap_{j=1}^\infty F_j$.  Of course,
practical engineered antennae are not true fractals but rather a prefractal approximation $F_j$ from the recursive sequence. So an interesting mathematical question of potential practical interest is: how does the radiated field from a prefractal antenna $F_j$ behave in the limit as $j\to\infty$ and $F_j\to F$?

We will not address this problem in this paper, which could be studied, at a particular radiating frequency, via a consideration of boundary value problems for the time harmonic Maxwell system in the exterior of the antenna, using for example the BIE formulation of \cite{BuCh:03}. Rather, we shall consider analogous time harmonic acoustic problems, modelled by boundary value problems for the Helmholtz equation. These problems can be considered as models of many of the issues and potential behaviours, and we will discuss, applying the results of \S\ref{subsec:ApproxVar} and other sections above, the limiting behaviour of sequences of solutions to BIEs, considering as illustrative examples two of several possible set-ups.

For the Dirichlet screen problem we will consider the limit $\Gamma_j\to F$ where the closed set $F=\cap_{j=1}^\infty \Gamma_j$ may be fractal and each $\Gamma_j$ is a regular Lipschitz screen. For the Neumann screen problem we will consider the limit $\Gamma_j\to \Gamma$ where the open set $\Gamma = \cup_{j=1}^\infty \Gamma_j$, and $\overline{\Gamma}\setminus \Gamma$ may be fractal. In the Dirichlet case we will see that the limiting solution may be non-zero even when $m(F)=0$ ($m$ here 2D Lebesgue measure), provided the fractal dimension of $F$ is $>1$. In the Neumann case we will see that in cases where $\Gamma^* := \inter(\overline{\Gamma})$ is a regular Lipschitz screen the limiting solution can differ from the solution for $\Gamma^*$ if   the fractal dimension of $\partial \Gamma$ is $>1$.

The set-up is as follows. For $\bx=(x_1,x_2,x_3)\in \R^3$ let $\tilde \bx=(x_1,x_2)$ and let $\Gamma_\infty = \{(\tilde \bx,0):\tilde \bx\in \R^2\}\subset \R^3$, which we identify with $\R^2$ in the obvious way. Let $\Gamma$ be a bounded open Lipschitz subset of $\Gamma_\infty$, choose $k\in \C$ (the {\em wavenumber}), with $k\neq 0$ and\footnote{Our assumption here that $k$ has a positive imaginary part corresponds physically to an assumption of some energy absorption in the medium of propagation. While making no essential difference to the issues we consider, a positive imaginary part for $k$ simplifies the mathematical formulation of our screen problems slightly.} $0< \arg(k)\leq \pi/2$, and consider the following Dirichlet and Neumann screen problems for the Helmholtz equation (our notation $W^1_2(\R^3)$ here is as defined in \S\ref{sec:intro}):
\begin{equation*} %
\begin{split}
 \mbox{Find }u\in C^2(\R^3\setminus\overline{\Gamma})\cap W^1_2(\R^3\setminus\overline{\Gamma})  \mbox{ such that }\Delta u + k^2u=0 \mbox{ in }\R^3\setminus\overline{\Gamma} \mbox{ and}\\
 u = f\in H^{1/2}(\Gamma) \mbox{ on }\Gamma \mbox{ (Dirichlet) or}\\
  \frac{\partial u}{\partial \bn} = g\in H^{-1/2}(\Gamma) \mbox{ on }\Gamma \mbox{ (Neumann)}.
\end{split}
\end{equation*}
Where $U_+ := \{\bx\in \R^3:x_3>0\}$ and $U_-:= \R^3\setminus \overline{U_+}$ are  the upper and lower half-spaces,  by $u=f$ on $\Gamma$ we mean precisely that $\gamma_\pm u|_\Gamma=f$, where $\gamma_\pm$ are the standard trace operators $\gamma_\pm:H^1(U_\pm)=W^1_2(U_\pm)\to H^{1/2}(\Gamma_\infty)$. Similarly, by $\partial u/\partial \bn=g$ on $\Gamma$ we mean precisely that $\partial_\bn^\pm u|_\Gamma=g$, where $\partial_\bn^\pm$ are the standard normal derivative operators $\partial_\bn^\pm: W^1_2(U_\pm;\Delta) \to H^{1/2}(\Gamma_\infty)$; here $W^1_2(U_\pm;\Delta)= \{u\in W^1_2(U_\pm):\Delta u\in L^2(U_\pm)\}$, and for definiteness we take the normal in the $x_3$-direction, so that $\partial u/\partial \bn = \partial u/\partial x_3$.

These screen problems are uniquely solvable: one standard proof of this is via BIE methods \cite{sauter-schwab11}. The following theorem, reformulating these screen problems as BIEs,  is standard (e.g.~\cite{sauter-schwab11}), dating back to \cite{stephan87} in the case when $\Gamma$ is $C^\infty$ (the result in \cite{stephan87} is for $k\geq 0$, but the argument is almost identical and slightly simpler for the case $\Im(k)>0$). The notation in this theorem is that $[u]:= \gamma_+u-\gamma_-u \in H^{1/2}_{\overline{\Gamma}}\subset H^{1/2}(\Gamma_\infty)$ and $[\partial_\bn u]:= \partial_\bn^+u-\partial_\bn^-u \in H^{-1/2}_{\overline{\Gamma}}\subset H^{-1/2}(\Gamma_\infty)$ (and recall that $H^{s}_{\overline{\Gamma}}=\tH^s(\Gamma)$, $s\in \R$, since $\Gamma$ is Lipschitz; see \cite[Theorem 3.29]{McLean} or Lemma \ref{lem:sob_equiv} above). Further, for every compactly supported $\phi\in H^{-1/2}(\Gamma_\infty)$, $\mathcal{S}\phi \in H^1(\R^3)=W^1_2(
\R^3)
$ denotes the standard acoustic single-layer
potential (e.g.~\cite{McLean,ChGrLaSp:11}), defined explicitly in the case that $\phi$ is continuous by
\begin{equation*}%
\mathcal{S}\phi(\bx) = \int_{\Gamma_\infty} \Phi(\bx,\by) \phi(\by) \,\rd s(\by), \quad \bx\in \R^3,
\end{equation*}
where $\Phi(\bx,\by) := \exp(\ri k |\bx-\by|)/(4\pi |\bx-\by|)$ is the fundamental solution for the Helmholtz equation. Similarly \cite{McLean,ChGrLaSp:11}, for compactly supported $\psi\in H^{1/2}(\Gamma_\infty)$, $\mathcal{D}\psi \in W^1_2(\R^3\setminus \supp{\psi})$ is the standard acoustic double-layer potential, defined by
\begin{equation*}%
\mathcal{D}\psi(\bx) = \int_{\Gamma_\infty} \frac{\partial \Phi(\bx,\by)}{\partial \bn(\by)} \psi(\by) \,\rd s(\by), \quad \bx\in \R^3\setminus \supp{\psi}.
\end{equation*}

\begin{thm} [E.g., \cite{stephan87,sauter-schwab11}.] \label{thm:bie}
If $u$ satisfies the Dirichlet screen problem then
$$
u(\bx) = -\mathcal{S}[\partial_\bn u ](\bx), \quad \bx\in \R^3\setminus \overline{\Gamma},
$$
and $[\partial_\bn u]\in \tH^{-1/2}(\Gamma)$ is the unique solution of
\begin{equation} \label{eq:single}
S_\Gamma [\partial_\bn u] = f,
\end{equation}
where the isomorphism $S_\Gamma:\tH^{-1/2}(\Gamma)\to H^{1/2}(\Gamma)$ is the standard acoustic single-layer boundary integral operator, defined by
$$
S_\Gamma \phi:= \gamma_\pm \mathcal{S} \phi\big|_\Gamma, \quad \phi \in \tH^{-1/2}(\Gamma).
$$
Similarly, if $u$ satisfies the Neumann screen problem then
$$
u(\bx) = \mathcal{D}[u](\bx), \quad \bx\in \R^3\setminus \overline{\Gamma},
$$
and $[u]\in \tH^{1/2}(\Gamma)$ is the unique solution of
\begin{equation} \label{eq:hyp}
T_\Gamma [u] = -g,
\end{equation}
where the isomorphism $T_\Gamma:\tH^{1/2}(\Gamma)\to H^{-1/2}(\Gamma)$ is the standard acoustic hypersingular integral operator, defined by
$$
T_\Gamma \phi:= \partial_\bn^\pm \mathcal{D} \phi\big|_\Gamma, \quad \phi \in \tH^{1/2}(\Gamma).
$$
\end{thm}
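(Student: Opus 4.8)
The plan is to derive both reformulations from Green's third identity together with the standard jump relations for the acoustic single- and double-layer potentials, using the Sobolev-space results of the preceding sections to pin down the spaces in which the jumps $[u]$ and $[\partial_\bn u]$ live, and then to obtain the isomorphism property of $S_\Gamma$ and $T_\Gamma$ from a coercivity argument exploiting $\Im(k)>0$. The two substantive new inputs are the identification $H^s_{\overline{\Gamma}}=\tH^s(\Gamma)$ for Lipschitz $\Gamma$ and the injectivity of the restriction operator $|_\Gamma$ at the borderline regularity $s=\pm 1/2$, which the paper has already supplied in Lemmas~\ref{lem:sob_equiv}, \ref{lem:polarity}\rf{kk} and \ref{lem:RestrictionCollection}\rf{b4}.

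First I would establish the representation formula. Applying Green's third identity to $u$ separately in the half-spaces $U_+$ and $U_-$, and using that $\Im(k)>0$ together with $u\in W^1_2(\R^3\setminus\overline{\Gamma})$ forces exponential decay of $u$ at infinity (so no radiation condition is needed and the contributions from large spheres vanish), one obtains, after adding the two identities and collecting the resulting boundary integrals over $\Gamma_\infty$,
$$u = \mathcal{D}[u] - \mathcal{S}[\partial_\bn u] \qquad \text{in } \R^3\setminus\overline{\Gamma}$$
(with the orientation conventions fixed above). Away from $\overline{\Gamma}$ the function $u$ is smooth across $\Gamma_\infty$, so the jumps $[u]$ and $[\partial_\bn u]$, a priori distributions on $\Gamma_\infty$, are supported in $\overline{\Gamma}$; and since $u|_{U_\pm}\in W^1_2(U_\pm)$ with $\Delta u|_{U_\pm}=-k^2u|_{U_\pm}\in L^2(U_\pm)$, the classical trace theorems place $[u]\in H^{1/2}_{\overline{\Gamma}}$ and $[\partial_\bn u]\in H^{-1/2}_{\overline{\Gamma}}$, which by Lemma~\ref{lem:sob_equiv} (recall $\Gamma$ is Lipschitz, hence $C^0$) coincide with $\tH^{1/2}(\Gamma)$ and $\tH^{-1/2}(\Gamma)$ respectively.

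For the Dirichlet problem I would note that $\gamma_+u|_\Gamma=\gamma_-u|_\Gamma=f$ forces $[u]|_\Gamma=0$ in $H^{1/2}(\Gamma)$; since $\partial\Gamma$ is $(1/2)$-null for Lipschitz $\Gamma$ (Lemma~\ref{lem:polarity}\rf{kk}), Lemma~\ref{lem:RestrictionCollection}\rf{b4} makes $|_\Gamma\colon H^{1/2}_{\overline{\Gamma}}\to H^{1/2}(\Gamma)$ injective, so $[u]=0$ and the representation collapses to $u=-\mathcal{S}[\partial_\bn u]$; taking the Dirichlet trace of both sides onto $\Gamma_\infty$ (the single-layer potential having continuous Dirichlet trace) and restricting to $\Gamma$ then yields \eqref{eq:single}. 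The Neumann case is entirely symmetric: $\partial_\bn^+u|_\Gamma=\partial_\bn^-u|_\Gamma=g$ forces $[\partial_\bn u]|_\Gamma=0$, and since $\partial\Gamma$ is also $(-1/2)$-null for Lipschitz $\Gamma$ --- precisely the borderline case of Lemma~\ref{lem:polarity}\rf{kk} --- the same injectivity argument (now at $s=-1/2$) gives $[\partial_\bn u]=0$, hence $u=\mathcal{D}[u]$, and taking the normal-derivative trace onto $\Gamma$ yields \eqref{eq:hyp}. It then remains to show that $S_\Gamma\colon\tH^{-1/2}(\Gamma)\to H^{1/2}(\Gamma)$ and $T_\Gamma\colon\tH^{1/2}(\Gamma)\to H^{-1/2}(\Gamma)$ are isomorphisms, which in particular gives the claimed uniqueness. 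Boundedness is immediate from the standard mapping properties of $\mathcal{S},\mathcal{D}$ on $\tH^{\pm 1/2}(\Gamma)$, the trace theorems, and the duality $(\tH^{\pm 1/2}(\Gamma))^*\cong H^{\mp 1/2}(\Gamma)$ of Theorem~\ref{thm:DualityTheorem}; for invertibility it suffices, by the Lax--Milgram lemma, to prove that the sesquilinear forms $(\phi,\psi)\mapsto\langle S_\Gamma\phi,\psi\rangle$ on $\tH^{-1/2}(\Gamma)$ and $(\phi,\psi)\mapsto\langle T_\Gamma\phi,\psi\rangle$ on $\tH^{1/2}(\Gamma)$ are coercive. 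For $S_\Gamma$ this follows from Green's first identity applied to $v:=\mathcal{S}\phi$ over $\R^3$, which yields $\langle S_\Gamma\phi,\phi\rangle=\int_{\R^3}\big(|\nabla v|^2-k^2|v|^2\big)\,\rd\bx$ up to conjugation; taking a suitable linear combination of the real and imaginary parts, using $\Im(k)>0$ and the norm equivalence $\|\phi\|_{\tH^{-1/2}(\Gamma)}\asymp\|v\|_{W^1_2(\R^3)}$ (which comes from the jump relation $[\partial_\bn v]=-\phi$ and the mapping properties of $\mathcal{S}$), one gets $|\langle S_\Gamma\phi,\phi\rangle|\ge c\|\phi\|^2_{\tH^{-1/2}(\Gamma)}$; the argument for $T_\Gamma$ is the analogue based on $\mathcal{D}$.

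The part I expect to take the most care is not the Sobolev bookkeeping above --- the earlier sections have essentially reduced that to citing Lemmas~\ref{lem:sob_equiv}, \ref{lem:polarity} and \ref{lem:RestrictionCollection} --- but the classical analytic input on layer potentials on the Lipschitz screen $\Gamma$: the continuity of $\mathcal{S}$ and $\mathcal{D}$ between the relevant Sobolev spaces, the jump relations, and above all the coercivity estimates of the last step. These are all standard (which is why the result is attributed to \cite{stephan87,sauter-schwab11}), but they are the substantive ingredients and are exactly where the Lipschitz hypothesis on $\Gamma$ enters essentially. A minor subtlety to watch is that the representation formula must be checked to hold on all of $\R^3\setminus\overline{\Gamma}$, including on the punctured plane $\Gamma_\infty\setminus\overline{\Gamma}$; this again uses that $[u]$ and $[\partial_\bn u]$ are supported in $\overline{\Gamma}$, so that $\mathcal{D}[u]$ and $\mathcal{S}[\partial_\bn u]$ are smooth there.
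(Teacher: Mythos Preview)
The paper does not supply its own proof of this theorem: it is stated as a standard result, attributed to \cite{stephan87,sauter-schwab11}, and the text immediately following the statement simply records that the isomorphism property of $S_\Gamma$ and $T_\Gamma$ follows from continuity and coercivity of the sesquilinear forms $a_{\rm D}$, $a_{\rm N}$ (citing \cite{stephan87,Ha-Du:92,Co:04}) together with the Lax--Milgram lemma. Your sketch is a correct outline of exactly that standard argument, and in particular your coercivity step matches what the paper indicates.

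One remark on where your write-up goes slightly beyond the cited references: your use of Lemmas~\ref{lem:polarity}\rf{kk} and \ref{lem:RestrictionCollection}\rf{b4} to deduce $[u]=0$ (Dirichlet) and $[\partial_\bn u]=0$ (Neumann) from the vanishing of their restrictions to $\Gamma$ is a clean way to handle a point that is often glossed over in standard treatments, and it is precisely the viewpoint the paper develops (cf.\ Remark~\ref{rem:finite_union}, which identifies $s$-nullity of $\partial\Gamma$ and the equality $\tH^{\pm1/2}(\Gamma)=H^{\pm1/2}_{\overline\Gamma}$ as the conditions under which the theorem extends beyond Lipschitz $\Gamma$). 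The only place to be a little more careful is the coercivity step: the ``norm equivalence'' $\|\phi\|_{\tH^{-1/2}(\Gamma)}\asymp\|\mathcal{S}\phi\|_{W^1_2(\R^3)}$ you invoke is not a free consequence of the jump relation alone, since the nontrivial direction (lower bound) is essentially the coercivity you are proving; the cited references obtain the coercivity directly from the energy identity and $\Im(k)>0$ without passing through that equivalence.
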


The standard analysis of the above BIEs, in particular the proof that $S_\Gamma$ and $T_\Gamma$ are isomorphisms, progresses via a variational formulation. Recalling from Theorem \ref{thm:DualityTheorem} that $H^{-s}(\Gamma)$ is (a natural unitary realisation of) the dual space of $\tH^s(\Gamma)$, we define sesquilinear
forms $a_{\rm D}$ on $\tH^{-1/2}(\Gamma)$  and $a_{\rm N}$ on $\tH^{1/2}(\Gamma)$ by
\begin{align*}
\label{}
a_{\rm D}(\phi,\psi) &= \langle S_\Gamma \phi,\psi \rangle, \quad \phi,\psi\in \tH^{-1/2}(\Gamma),\\
a_{\rm N}(\phi,\psi) &= \langle T_\Gamma \phi,\psi \rangle, \quad \phi,\psi\in \tH^{1/2}(\Gamma),
\end{align*}
where in each equation $\langle.,.\rangle$ is the appropriate duality pairing. Equation \eqref{eq:single} is equivalent to the variational formulation: find $[\partial_\bn u]\in \tH^{-1/2}(\Gamma)$ such that
\begin{equation} \label{eq:wfD}
a_{\rm D}([\partial_\bn u],\phi) = \langle f,\phi\rangle, \quad \phi \in \tH^{-1/2}(\Gamma).
\end{equation}
Similarly \eqref{eq:hyp} is equivalent to: find $[u]\in \tH^{1/2}(\Gamma)$ such that
\begin{equation} \label{eq:wfN}
a_{\rm N}([u],\psi) = -\langle g,\psi\rangle, \quad \psi \in \tH^{1/2}(\Gamma).
\end{equation}
These sesquilinear forms (see \cite{stephan87,Ha-Du:92,Co:04})
are continuous and coercive in the sense of \eqref{eq:defcoer}. It follows from the Lax--Milgram theorem
that \eqref{eq:wfD} and \eqref{eq:wfN} (and so also \eqref{eq:single} and \eqref{eq:hyp}) are uniquely solvable.
\begin{rem} \label{rem:finite_union}
It is not difficult to show (see \cite{CoercScreen,ScreenPaper} for details) that Theorem \ref{thm:bie} holds, and the Dirichlet and Neumann screen problems are uniquely solvable, for a rather larger class of open sets than the open Lipschitz sets. Precisely, the Dirichlet problem is uniquely solvable, and Theorem \ref{thm:bie} holds for the Dirichlet problem, if and only if $\partial\Gamma$ is $1/2$-null (as defined in \S\ref{subsec:Polarity}) and $\tH^{-1/2}(\Gamma)=H^{-1/2}_{\overline \Gamma}$. In particular, by Lemma \ref{lem:polarity}(xvii), (v) and Theorem \ref{thm:new2}, and relevant to our discussion of prefractals below, these conditions hold in the case that $\Gamma= \Gamma_1 \cup \ldots \cup \Gamma_M$ is a finite union of bounded $C^0$ open sets, $\Gamma_1$, \ldots, $\Gamma_M$, with $\overline{\Gamma_i}\cap \overline{\Gamma_j}$ a finite set for $1\leq i,j\leq M$. Similarly, the Neumann problem is uniquely solvable, and Theorem \ref{thm:bie} holds for the Neumann problem, if and only if $\partial\
Gamma$ is $(-1/2)$-null and $\tH^{1/2}(\Gamma)=H^{1/2}_{\overline \Gamma}$; in particular, by Lemma \ref{lem:polarity}(xix), (v) and Theorem \ref{thm:new2}, these conditions hold in the case that $\Gamma= \Gamma_1 \cup \ldots \cup \Gamma_M$ is a finite union of bounded Lipschitz open sets, $\Gamma_1$, \ldots, $\Gamma_M$, with $\overline{\Gamma_i}\cap \overline{\Gamma_j}$ finite for $1\leq i,j\leq M$.
\end{rem}

Domain-based variational formulations of screen problems are also standard. In particular, an equivalent formulation of the Dirichlet problem is to find $u\in H^1(\R^3)= W_2^1(\R^3)$ such that $\gamma_\pm u = f$ on $\Gamma$ and such that
\begin{equation} \label{eq:dom_weak}
a_{\mathrm{dom}}(u,\psi) := \int_{\R^3} (\nabla u \cdot \nabla \bar v - k^2 u \bar v) \,\rd \bx = 0, \quad \forall v\in H^1_0(\R^3\setminus \overline{\Gamma}),%
\end{equation}
with $a_{\mathrm{dom}}(\cdot,\cdot)$ continuous and coercive on $H_0^1(\R^3\setminus\overline{\Gamma})$, so that this formulation is also uniquely solvable by the Lax--Milgram lemma.
In the case that $\Re(k)=0$, so that $k^2<0$, $a_{\mathrm{dom}}(\cdot,\cdot)$ is also Hermitian, and the solution to this variational problem is also the unique solution to the minimisation problem: find $u\in H^1(\R^3)$ that minimises $a_{\mathrm{dom}}(u,u)$ subject to the constraint $\gamma_\pm u = f$.

This leads to a connection to certain set capacities from potential theory.
For an open set $\Omega\subset \R^n$ and $s>0$ we define the capacity
\begin{align*}
\label{}
\mathrm{cap}_{s,\R^n}(\Omega):=\sup_{\substack{K\subset \Omega\\ K \textrm{ compact}}}
\inf \big\{\|u \|^2_{H^{s}(\R^n)}
\big\},
\end{align*}
where the infimum is over all $u\in \scrD(\R^n)$ such that $u\geq 1$ in a neighbourhood of $K$.
Then, in the special case when $k=\ri$ (so that $a_\mathrm{dom}(u,u) = \|u\|^2_{H^1(\R^3)}$ for $u\in H^1(\R^3)$) and $f=1$, the solution $u$ of the above minimisation problem
satisfies
(viewing $\Gamma$ as a subset of $\R^3$)
\begin{equation} \label{eq:capbie}
\mathrm{cap}_{1,\R^3}(\Gamma) = a_{\mathrm{dom}}(u,u) = a_D([\partial_n u],[\partial_n u])= \langle 1, [\partial_n u]\rangle,
\end{equation}
where $[\partial_n u]\in H^{-1/2}(\Gamma)$ is the unique solution of \eqref{eq:wfD} and $u= -\mathcal{S}[\partial_n u]$ is the unique solution of \eqref{eq:dom_weak}. Note that in \rf{eq:capbie} the first equality follows from standard results on capacities (see, e.g., \cite[Proposition 3.4, Remark 3.14]{HewMoi:15}), the third from \eqref{eq:wfD}, and the second equality follows because $a_D(\phi,\phi) = a_{\mathrm{dom}}(\mathcal{S}\phi,\mathcal{S}\phi)$, for all $\phi\in \tH^{-1/2}(\Gamma)$ (cf.\ the proof of \cite[Theorem 2]{Costabel88}).

We are interested in sequences of screen problems, with a sequence of screens $\Gamma_1, \Gamma_2, \ldots$ converging in some sense to a limiting screen. We assume that there exists $R>0$ such that the open set $\Gamma_j\subset \Gamma^R := \{\bx\in \Gamma_\infty: |\bx|<R\}$ for every $j\in \N$. Let $a_\mathrm{D}^R$ and $a_\mathrm{N}^R$ denote the sesquilinear forms $a_\mathrm{D}$ and $a_\mathrm{N}$ when $\Gamma=\Gamma^R$. We note that for any $R>0$ and open $\Gamma\subset \Gamma^R$ it holds that
\begin{equation*} %
S_\Gamma \phi = \left.\left(S_{\Gamma^R} \phi\right)\right|_\Gamma \quad \mbox{ and } \quad T_\Gamma \psi = \left.\left(T_{\Gamma^R} \psi\right)\right|_\Gamma, %
\end{equation*}
for $\phi\in \tH^{-1/2}(\Gamma)$ and $\psi\in \tH^{1/2}(\Gamma)$.
Hence
\begin{equation} \label{eq:rest2}
a_\mathrm{D}(\phi,\psi) = a_\mathrm{D}^R(\phi,\psi), \quad \phi,\psi\in \tH^{-1/2}(\Gamma)\subset \tH^{-1/2}(\Gamma^R),
\end{equation}
 i.e. $a_\mathrm{D}$ is the restriction of the sesquilinear form $a_\mathrm{D}^R$ from $\tH^{-1/2}(\Gamma^R)$ to its closed subspace $\tH^{-1/2}(\Gamma)$. Similarly, $a_\mathrm{N}$ is the restriction of $a_\mathrm{N}^R$ to $\tH^{1/2}(\Gamma)$.

 Focussing first on the Dirichlet problem, consider a sequence of Lipschitz screens $\Gamma_1, \Gamma_2, \ldots$ with $\Gamma_1 \supset \Gamma_2 \supset \ldots$ (or equivalently $\overline{\Gamma_1} \supset \overline{\Gamma_2} \supset \ldots$). Suppose that $f_j\in H^{1/2}(\Gamma_j)$ and let $\phi_j$ denote the solution $[\partial_\bn u]$ to \eqref{eq:wfD} (equivalently to \eqref{eq:single}) when $\Gamma=\Gamma_j$ and $f=f_j$. The question we address is what can be said about $\phi_j$ in the limit as $j\to\infty$. For this question to be meaningful, we need some control over the sequence $f_j$: a natural assumption, relevant to many applications, is that
\begin{equation} \label{eq:assum}
\mbox{there exists }f_\infty\in H^{1/2}(\Gamma_\infty) \;\mbox{ such that } f_j = f_\infty|_{\Gamma_j}, \; \mbox{for } j\in \N.
\end{equation}
We shall study the limiting behaviour under this assumption using the general theory of \S\ref{subsec:ApproxVar}.

To this end choose $R>0$ so that $\Gamma_1\subset \Gamma^R$, let $H = \tH^{-1/2}(\Gamma^R)$, $W_j = \tH^{-1/2}(\Gamma_j)$, so that $H\supset W_1\supset W_2 \supset \ldots$, and set
$$
W = \bigcap_{j=1}^\infty W_j = \bigcap_{j=1}^\infty H^{-1/2}_{\overline{\Gamma_j}}  = \bigcap_{j=1}^\infty \tH^{-1/2}(\Gamma_j).
$$
Then, by Proposition \ref{prop:nestedclosed}, $W = H^{-1/2}_F$, where $F = \cap_{j=1}^\infty \overline{\Gamma_j}$. Further, by \eqref{eq:rest2}, and where $f= f_\infty|_{\Gamma^R}$, we see that $\phi_j$ is the solution of
$$
a_\mathrm{D}^R(\phi_j,\psi) = \langle f,\psi\rangle, \quad \psi\in W_j.
$$
Applying Lemma \ref{lem:dec} we obtain immediately the first part of the following result. The remainder of the theorem follows from Lemma \ref{lem:polarity}(\ref{hh}) and (\ref{gg}).
\begin{thm} \label{thm:dec}
Assuming \eqref{eq:assum},  $\|\phi_j-\phi\|_{H^{-1/2}(\Gamma_\infty)}=\|\phi_j-\phi\|_{\tH^{-1/2}(\Gamma^R)}\to 0$ as $j\to\infty$, where $\phi\in H^{-1/2}_F$ is the unique solution of
$$
a_\mathrm{D}^R(\phi,\psi) = \langle f,\psi\rangle, \quad \psi\in H^{-1/2}_F.
$$
Further, if $F$ is $(-1/2)$-null (which holds in particular if $\dimH(F) < 1$) then $\phi=0$. If $F$ is not $(-1/2)$-null (which holds in particular if $\dimH(F) > 1$), then there exists $f_\infty\in H^{1/2}(\Gamma_\infty)$ such that $\langle f,\psi\rangle\neq 0$, for some $\psi\in H^{-1/2}_F$, in which case $\phi\neq 0$.
\end{thm}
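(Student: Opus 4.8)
The plan is to prove Theorem~\ref{thm:dec} in three stages: first the convergence statement, then the ``$\phi=0$'' case, then the ``$\phi\neq 0$'' case.

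\textbf{Convergence.} First I would set up the abstract framework exactly as described in the paragraph preceding the theorem: fix $R>0$ with $\Gamma_1\subset\Gamma^R$, put $H=\tH^{-1/2}(\Gamma^R)$ and $W_j=\tH^{-1/2}(\Gamma_j)=H^{-1/2}_{\overline{\Gamma_j}}$, so that $H\supset W_1\supset W_2\supset\cdots$ is a decreasing sequence of closed subspaces of $H$. Using the duality realisation of Theorem~\ref{thm:DualityTheorem}, $\big(\tH^{-1/2}(\Gamma^R)\big)^*$ is realised as $H^{1/2}(\Gamma^R)$ with the $L^2$-extension duality pairing, so $f:=f_\infty|_{\Gamma^R}$ provides the right-hand side functional. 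By \eqref{eq:rest2}, $a_\mathrm{D}^R$ restricts to $a_\mathrm{D}$ on each $W_j$, and since $a_\mathrm{D}^R$ is continuous and coercive on $H$ (this is the standard single-layer coercivity cited from \cite{stephan87,Ha-Du:92,Co:04}), each $\phi_j$ is exactly the Lax--Milgram solution $u_{W_j}$ of \eqref{eq:VarEq} with this form and right-hand side. By Proposition~\ref{prop:nestedclosed}, $W:=\bigcap_{j=1}^\infty W_j = H^{-1/2}_F$ with $F=\bigcap_{j=1}^\infty\overline{\Gamma_j}$. Lemma~\ref{lem:dec} then gives $\|\phi_j-\phi\|_H\to0$, where $\phi=u_W\in H^{-1/2}_F$ is the unique solution of $a_\mathrm{D}^R(\phi,\psi)=\langle f,\psi\rangle$ for all $\psi\in H^{-1/2}_F$. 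The equality of norms $\|\phi_j-\phi\|_{H^{-1/2}(\Gamma_\infty)}=\|\phi_j-\phi\|_{\tH^{-1/2}(\Gamma^R)}$ holds because for a distribution supported in $\overline{\Gamma^R}$ the $H^{-1/2}(\Gamma_\infty)$-norm (i.e.\ the $H^{-1/2}(\R^2)$-norm, since elements of $H^{-1/2}_{\overline{\Gamma^R}}$ extend by zero) and the $\tH^{-1/2}(\Gamma^R)$-norm coincide.

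\textbf{The case $F$ is $(-1/2)$-null.} Here $H^{-1/2}_F=\{0\}$ by the definition of $s$-nullity (the trivial remark after Definition of $s$-null, with $s=-1/2$), so $\phi=u_W$ is forced to be $0$. The parenthetical sufficient condition $\dimH(F)<1$ implies $(-1/2)$-nullity by Lemma~\ref{lem:polarity}\rf{hh} applied with $n=2$, $s=-1/2$: the hypothesis $-n/2<s\le0$ reads $-1<-1/2\le0$, and $\dimH(E)<n+2s=2-1=1$ is exactly $\dimH(F)<1$.

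\textbf{The case $F$ is not $(-1/2)$-null.} Then $H^{-1/2}_F\neq\{0\}$. The sufficient condition $\dimH(F)>1$ gives this via the contrapositive of Lemma~\ref{lem:polarity}\rf{gg} (with $-n/2\le s<0$, i.e.\ $-1\le-1/2<0$): if $F$ were Borel and $(-1/2)$-null we would have $\dimH(F)\le n+2s=1$; since $F$ is closed hence Borel, $\dimH(F)>1$ forces non-$(-1/2)$-nullity. Now I must exhibit $f_\infty\in H^{1/2}(\Gamma_\infty)$ with $\langle f,\psi\rangle\neq0$ for some $\psi\in H^{-1/2}_F$, which then forces $\phi\neq0$ (if $\phi=0$ the variational equation would give $\langle f,\psi\rangle=a_\mathrm{D}^R(0,\psi)=0$ for all such $\psi$). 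Pick any $0\neq\psi\in H^{-1/2}_F\subset H^{-1/2}(\R^2)$. By the self-duality $\big(H^{-1/2}(\R^2)\big)^*\cong H^{1/2}(\R^2)$ via the pairing $\langle\cdot,\cdot\rangle_{1/2}$ of \rf{DualDef} (the Riesz-type realisation of \S\ref{subsec:dual1}), nondegeneracy of that pairing yields some $g\in H^{1/2}(\R^2)$ with $\langle g,\psi\rangle_{1/2}=\overline{\langle\psi,g\rangle_{-1/2}}\neq0$; concretely one may take $g=\cJ_{-1}\psi$ if one wants an explicit choice, since $\langle\cJ_{-1}\psi,\psi\rangle_{1/2}=(\cJ_{-1/2}\cJ_{-1}\psi,\cJ_{1/2}\psi)_{L^2}=\|\psi\|^2_{H^{-1/2}(\R^2)}>0$. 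Restricting $g$ to the hyperplane-identified $\R^2=\Gamma_\infty$ and, if needed, truncating with a cutoff supported near $\overline{\Gamma^R}$ (which does not change the pairing against $\psi$ since $\supp\psi\subset F\subset\overline{\Gamma^R}$, and the pairing with $v\in\scrD$ coincides with distributional action by \rf{dualequiv}), we obtain $f_\infty\in H^{1/2}(\Gamma_\infty)$ with $\langle f,\psi\rangle\neq0$ where $f=f_\infty|_{\Gamma^R}$, as required.

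\textbf{Main obstacle.} The genuinely delicate point is not the capacity-dimension bookkeeping (which is a direct application of Lemma~\ref{lem:polarity}) but rather making the bookkeeping of duality pairings and restrictions airtight in the non-$(-1/2)$-null case: one must check that the functional $f$ appearing in the variational problem is literally the restriction to $\Gamma^R$ of a genuine element $f_\infty$ of $H^{1/2}(\Gamma_\infty)$, that the pairing $\langle f,\psi\rangle$ appearing in Theorem~\ref{thm:dec} is the one induced by Theorem~\ref{thm:DualityTheorem} (so that $\langle f,\psi\rangle=\langle f_\infty,\psi\rangle_{1/2}$ with $f_\infty$ any $H^{1/2}(\R^2)$-extension, by the ``any extension'' clause in \rf{def_embed1App}), and that the truncation step preserves membership in $H^{1/2}$ — the last using that multiplication by a smooth compactly supported cutoff is bounded on $H^{1/2}(\R^n)$. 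None of these is hard individually, but getting the identifications consistent is where care is needed.
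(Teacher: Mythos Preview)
Your proof is correct and follows exactly the approach the paper takes: the convergence is a direct application of Lemma~\ref{lem:dec} to the decreasing sequence $W_j=\tH^{-1/2}(\Gamma_j)$ set up in the paragraph preceding the theorem (with $W=H^{-1/2}_F$ by Proposition~\ref{prop:nestedclosed}), and the nullity/dimension statements are read off from Lemma~\ref{lem:polarity}\rf{hh} and~\rf{gg}. You supply more detail than the paper's one-line proof, in particular the explicit choice $f_\infty=\cJ_{-1}\psi$ in the non-null case; note only that the correct subscript on the duality pairing in that computation is $-1/2$ rather than $1/2$ (so that $\langle \cJ_{-1}\psi,\psi\rangle_{-1/2}=\|\psi\|_{H^{-1/2}}^2$), and that no truncation is actually required since $\cJ_{-1}\psi\in H^{1/2}(\R^2)=H^{1/2}(\Gamma_\infty)$ already.
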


\begin{example}
Theorem \ref{thm:dec} applies in particular to cases in which $F$ is a fractal set. One such example is where
\begin{equation*}%
\overline{\Gamma_j} = \big\{(\tilde \bx, 0):\tilde \bx\in E_{j-1}^2\big\},
\end{equation*}
and $\Gamma_j = \mathrm{int}(\overline{\Gamma_j})$,
with (cf.\ \cite[Example 4.5]{Fal}) $E_0\supset E_1\supset\ldots$ the standard recursive sequence generating the one-dimensional ``middle-$\lambda$'' Cantor set, $0<\lambda<1$, so that $E_j^2\subset \R^2$ is the closure of a Lipschitz open set that is the union of $4^j$ squares of side-length $l_j=\alpha^j$, where $\alpha=(1-\lambda)/2\in (0,1/2)$. (Figure \ref{fig:CantorDust} visualises $E_0^2,\ldots, E_4^2$ in the classical ``middle third'' case $\alpha =\lambda = 1/3$.) In this case the limit set is %
\begin{equation*}%
F = \big\{(\tilde \bx, 0):\tilde \bx\in E^2\big\},
\end{equation*}
where $E=\cap_{j=0}^\infty E_j$ is the middle-$\lambda$ Cantor set and $E^2$ is the associated two-dimensional Cantor set (or ``Cantor dust''), which has Hausdorff dimension
$\dimH(E^2) = 2\log 2/\log(1/\alpha) \in (0,2)$. It is known that $E^2$ is $s$-null if and only if $s\geq (\dimH (E^2)-n)/2$ (see \cite[Theorem 4.5]{HewMoi:15}, where $E^2$ is denoted $F^{(2)}_{2\log 2/\log(1/\alpha),\infty}$).
Theorem \ref{thm:dec} applied to this example shows that if $1/4<\alpha < 1/2$ then there exists $f_\infty\in H^{1/2}(\Gamma_\infty)$ such that the limiting solution $\phi\in H^{-1/2}_F$ to the sequence of screen problems is non-zero.
On the other hand, if $0<\alpha\leq 1/4$ then the theorem tells us that the limiting solution $\phi=0$.
\end{example}

\begin{figure}

{\hspace{-12mm}\includegraphics[scale=0.75]{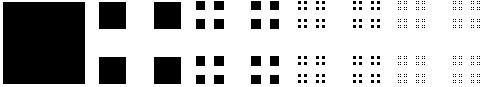}}
\caption{The first five terms in the recursive sequence of prefractals converging to the standard two-dimensional middle-third Cantor set (or Cantor dust).}%
\label{fig:CantorDust}
\end{figure}

It is clear from Theorem \ref{thm:dec} that whether or not the solution to the limiting sequence of screen problems is zero depends not on whether the limiting set $F$, thought of as a subset of $\Gamma_\infty$ which we identify with $\R^2$, has Lebesgue measure zero, but rather on whether this set $F$ is $(-1/2)$-null. %
From a physical perspective this may seem surprising: thinking of the screen as having a certain mass per unit area, a screen with zero surface Lebesgue measure is a screen with zero mass, in some sense a screen that is not there! But to those familiar with potential theory (e.g., \cite{AdHe}) this will be less surprising. In particular from \eqref{eq:capbie}, in the case $k=\ri$ and choosing $f_\infty$ so that $f_\infty=1$ in a neighbourhood of $\Gamma^R$, it holds that
$$
\mathrm{cap}_{1,\R^3}(\Gamma_j) = \langle 1, \phi_j\rangle.
$$
Taking the limit as $j\to\infty$, and applying elementary capacity theoretic arguments (see, e.g.,  \cite[Proposition 3.4]{HewMoi:15}), it follows that
$$
\mathrm{cap}_{1,\R^3}(F) = \langle 1, \phi\rangle.
$$
Moreover, for $\widetilde G \subset \R^2$, defining $G = \{(x_1,x_2,0)\in \R^3: (x_1,x_2)\in \widetilde G\}$, it is clear from the definition of capacity (which involves smooth functions only) and standard Sobolev trace and extension theorems (e.g.\ \cite{McLean}) that, for some positive constants $c_1$ and $c_2$ independent of $\tilde G$,
\begin{align}
\label{}
c_1  \mathrm{cap}_{1,\R^3}(G) \leq \mathrm{cap}_{1/2,\R^2}(\widetilde G) \leq c_2 \mathrm{cap}_{1,\R^3}(G).
\end{align}
Thus, where $\widetilde F = \{(x_1,x_2)\subset \R^2: (x_1,x_2,0)\in F\}$, it is clear that $\phi=0$ iff $\mathrm{cap}_{1,\R^3}(F) =0$ iff $\mathrm{cap}_{1/2,\R^2}(\widetilde F)=0$, i.e. iff $\widetilde F$ is $(-1/2)$-null as a subset of $\R^2$, where the latter equivalence follows from \cite[13.2.2]{Maz'ya} (restated in \cite[Theorem 2.5]{HewMoi:15}).

Turning now to the Neumann problem, consider a sequence of open screens $\Gamma_1, \Gamma_2,\ldots$, with $\Gamma_1 \subset \Gamma_2 \subset\ldots$, such that: (i) $\Gamma := \bigcup_{j=1}^\infty \Gamma_j$ is bounded; and (ii) each $\Gamma_j$ is either Lipschitz or is a finite union of Lipschitz open sets whose closures intersect in at most a finite number of points (the case discussed in Remark \ref{rem:finite_union}, which ensures, inter alia, that $\tH^{1/2}(\Gamma_j) = H^{1/2}_{\overline{\Gamma_j}}$). Suppose that $g_j\in H^{-1/2}(\Gamma_j)$ and let $\phi_j\in V_j:= \tH^{1/2}(\Gamma_j)= H^{1/2}_{\overline{\Gamma_j}}$ denote the solution $[u]$ to \eqref{eq:wfN} (equivalently to \eqref{eq:hyp}) when $\Gamma=\Gamma_j$ and $g=g_j$. Analogously to the Dirichlet case we assume that
\begin{equation} \label{eq:assum2}
\mbox{there exists }g_\infty\in H^{-1/2}(\Gamma_\infty) \;\mbox{ such that } g_j = g_\infty|_{\Gamma_j}, \; \mbox{for } j\in \N,
\end{equation}
and choose $R>0$ such that $\Gamma \subset \Gamma^R$. Then, as noted after \eqref{eq:rest2}, and where $g= g_\infty|_{\Gamma^R}$, we see that $\phi_j\in V_j\subset \tH^{1/2}(\Gamma^R)$ is the solution of
$$
a_\mathrm{N}^R(\phi_j,\psi) = \langle g,\psi\rangle, \quad \psi\in V_j.
$$
By Proposition \ref{prop:nestedopen},
$
V := \overline{\bigcup_{j\in\N} V_j} = \tH^{1/2}(\Gamma).
$
The first sentence of the following proposition is immediate from \eqref{eq:V-conver}, and the second sentence is clear.
\begin{prop} \label{thm:neu}
In the case that \eqref{eq:assum2} holds,  $\|\phi_j-\phi\|_{H^{1/2}(\Gamma_\infty)}=\|\phi_j-\phi\|_{\tH^{1/2}(\Gamma^R)}=\|\phi_j-\phi\|_{\tH^{1/2}(\Gamma)}\to 0$ as $j\to\infty$, where $\phi\in \tH^{1/2}(\Gamma)$ is the unique solution of
$$
a_\mathrm{N}^R(\phi,\psi) = \langle g,\psi\rangle, \quad \psi\in \tH^{1/2}(\Gamma).
$$
Further, if $\tH^{1/2}(\Gamma) \neq H^{1/2}_{\overline{\Gamma}}$, then there exists $g_\infty\in H^{-1/2}(\Gamma_\infty)$ such that $\phi\neq \phi^*$, where $\phi^*\in H^{1/2}_{\overline{\Gamma}}$ is the unique solution of
$$
a_\mathrm{N}^R(\phi^*,\psi) = \langle g,\psi\rangle, \quad \psi\in H^{1/2}_{\overline{\Gamma}}.
$$
\end{prop}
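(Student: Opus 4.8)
The plan is to deduce the two sentences of the proposition separately: the first from the abstract C\'ea-type convergence result \eqref{eq:V-conver} of \S\ref{subsec:ApproxVar}, and the second from the Lax--Milgram lemma combined with the duality identifications of \S\ref{subsec:dual1} and \S\ref{subsec:DualAnnih}.

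\emph{The convergence statement.} First I would observe that $\phi_j$ is precisely the Galerkin solution, in the closed subspace $V_j=\tH^{1/2}(\Gamma_j)$, of the variational problem \eqref{eq:VarEq} associated with the sesquilinear form $a_\mathrm{N}^R$ on $H:=\tH^{1/2}(\Gamma^R)$ and the right-hand side functional $\psi\mapsto\langle g,\psi\rangle$, where $g=g_\infty|_{\Gamma^R}\in H^{-1/2}(\Gamma^R)$ plays the role of $f\in\cH$ (recall that $H^{-1/2}(\Gamma^R)$ is a unitary realisation of $(\tH^{1/2}(\Gamma^R))^*$ by Theorem~\ref{thm:DualityTheorem}, and that $\Gamma^R$, being a disc, is bounded and Lipschitz, so $a_\mathrm{N}^R$ is continuous and coercive on $H$ and hence on every closed subspace). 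Since the $V_j$ are increasing and nested with $\overline{\bigcup_{j}V_j}=\tH^{1/2}(\Gamma)$ by Proposition~\ref{prop:nestedopen}, \eqref{eq:V-conver} applies verbatim and yields $\|\phi_j-\phi\|_H\to0$, with $\phi\in\tH^{1/2}(\Gamma)$ the solution of the variational problem in the statement. The equality of the three displayed norms is then immediate, because $\phi_j-\phi$ lies in $\tH^{1/2}(\Gamma)\subset\tH^{1/2}(\Gamma^R)\subset H^{1/2}(\Gamma_\infty)$ and all of these spaces carry the norm inherited from $H^{1/2}(\Gamma_\infty)=H^{1/2}(\R^2)$. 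I expect this part to be entirely routine.

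\emph{Non-coincidence of the limits.} Write $W:=H^{1/2}_{\overline\Gamma}$, so that $V:=\tH^{1/2}(\Gamma)\subset W$, with strict inclusion by hypothesis, both $V$ and $W$ being closed subspaces of $\tH^{1/2}(\Gamma^R)$ on which $a_\mathrm{N}^R$ is continuous and coercive. The first step is to check that, as $g_\infty$ ranges over $H^{-1/2}(\Gamma_\infty)=H^{-1/2}(\R^2)$, the functional $\psi\mapsto\langle g,\psi\rangle$ restricted to $W$ ranges over all of $W^*$: given $F\in W^*$, extend it to a continuous functional on $H^{1/2}(\R^2)$ (e.g.\ by precomposing with the orthogonal projection onto $W$), represent the extension as $\langle g_\infty,\cdot\rangle_{1/2}$ for a suitable $g_\infty\in H^{-1/2}(\R^2)$ using \S\ref{subsec:dual1}, and note via \eqref{def_embed1App} that this coincides with $\langle g,\cdot\rangle$ on $W\subset\tH^{1/2}(\Gamma^R)$. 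The second step is to invoke the Lax--Milgram lemma: the map sending $F\in W^*$ to the solution $\phi^*\in W$ of $a_\mathrm{N}^R(\phi^*,\cdot)=F$ is a bijection $W^*\to W$, so as $g_\infty$ varies $\phi^*$ attains every value in $W$. If we had $\phi=\phi^*$ for every choice of $g_\infty$, then, since $\phi\in V$ always, we would obtain $W\subset V$ and hence $V=W$, contradicting $\tH^{1/2}(\Gamma)\neq H^{1/2}_{\overline\Gamma}$; therefore $\phi\neq\phi^*$ for at least one $g_\infty$.

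\emph{Main obstacle.} Neither half of the argument is deep. The only point demanding a little care is the first step of the second half --- that every continuous functional on $H^{1/2}_{\overline\Gamma}$ is produced by a boundary datum $g_\infty\in H^{-1/2}(\Gamma_\infty)$ through the duality pairing --- since this is where the precise identifications of Theorem~\ref{thm:DualityTheorem} and \S\ref{subsec:dual1}, together with the surjectivity of the relevant restriction and projection maps, must be lined up correctly; one must also be attentive to which pairing (on $\R^2$ versus on $\Gamma^R$) is intended at each stage.
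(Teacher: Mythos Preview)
Your proposal is correct and matches the paper's approach: the paper derives the first sentence directly from \eqref{eq:V-conver} (the surrounding text having already set up $H=\tH^{1/2}(\Gamma^R)$, $V_j=\tH^{1/2}(\Gamma_j)$, and $V=\tH^{1/2}(\Gamma)$ via Proposition~\ref{prop:nestedopen}), and simply declares the second sentence ``clear''. Your write-up just makes explicit what the paper leaves implicit, in particular the Lax--Milgram surjectivity argument and the lifting of functionals on $H^{1/2}_{\overline\Gamma}$ to data $g_\infty\in H^{-1/2}(\Gamma_\infty)$.
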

\begin{rem}
\label{rem:BIE}
The question: ``for which $s\in \R$ and open $\Omega\subset \R^n$ is $\tH^{s}(\Omega) \neq H^{s}_{\overline{\Omega}}$'' was addressed in \S\ref{subsec:3spaces}. From Lemma \ref{lem:equalityNullity} we have, in particular, that if $G:=\inter(\overline{\Gamma})\setminus \Gamma$ is not $-1/2$-null then $\tH^{1/2}(\Gamma) \subsetneqq H^{1/2}_{\overline{\Gamma}}$. Indeed, by Lemma \ref{lem:equalityNullity}(v), $\tH^{1/2}(\Gamma) = H^{1/2}_{\overline{\Gamma}}$ if and only if $G$ is $-1/2$-null, if it holds that $\tH^{1/2}(\inter(\overline{\Gamma})) = H^{1/2}_{\overline{\Gamma}}$, in particular if $\inter(\overline{\Gamma})$ is $C^0$. And, by Lemma \ref{lem:polarity}(xii) and (xiii), $G$ is $-1/2$-null if $\dim_H(G)<1$, while $G$ is not $-1/2$-null if $dim_H(G)>1$.

As a specific example, consider the sequence of closed sets $F_0\supset F_1\supset \ldots$ that are the prefractal approximations to the Sierpinski triangle  $F:= \bigcap_{j=0}^\infty F$ \cite[Example 9.4]{Fal}. $F_0$ is a (closed) triangle and $F_j$ is the union of $3^j$ closed triangles; the first four sets $F_0$, \ldots, $F_3$ in this sequence are shown in Figure \ref{fig:TSExamples}(a). For $j\in \N$ let $\Gamma_j := F_0\setminus F_j$, and let $\Gamma := \bigcup_{j\in\N} \Gamma_j$, so that $\overline{\Gamma} = F_0$ and $\partial \Gamma = \overline{\Gamma}\setminus \Gamma = F$. Then, using standard results on fractal dimension (e.g., \cite{Fal}), $\dim_H(\partial F_0) = 1$ while $\dim_H(F) = \log 3/\log 2$, so that also $\dim_H(\inter(\overline{\Gamma})\setminus \Gamma) = \dim_H(F\setminus \partial F_0) = \log 3/\log 2 >1$, which implies that $\tH^{1/2}(\Gamma) \subsetneqq H^{1/2}_{\overline{\Gamma}}$.
On the other hand, since $\Gamma^*:=\inter(\overline{\Gamma})$ is $C^0$, $H^{1/2}_{\overline{\Gamma}} = \tH^{1/2}(\Gamma^*)$, and $\phi^*\in \tH^{1/2}(\Gamma^*)$ (defined in Proposition~\ref{thm:neu}) is the solution $[u]$ to \eqref{eq:hyp} in the case when the screen is $\Gamma^*$ and $g$ in \eqref{eq:hyp} is the restriction of $g_\infty$ to $\Gamma^*$.

This specific example illustrates that the limit of the solutions $\phi_j\in \tH^{1/2}(\Gamma_j)$ to the BIE for the Neumann problem when the screen is $\Gamma_j$ can be different to the solution $\phi^* \in \tH^{1/2}(\Gamma^*)$ when the screen is $\Gamma^*$. It is surprising that this happens even though $\Gamma_j\to \Gamma^*$ in a number of senses. In particular, $\Gamma_j$ can be viewed as the screen $\Gamma^*$ with ``holes'' in it, but with the size of these holes, as measured by the 2D Lebesgue measure $m(\Gamma^*\setminus \Gamma_j)$, tending to $0$ as $j\to\infty$.
\end{rem}

\addcontentsline{toc}{section}{References}


\begin{thebibliography}{10}

\bibitem{AdHe}
{\sc D.~R. Adams and L.~I. Hedberg}, {\em Function Spaces and Potential
  Theory}, Springer, 1999.

\bibitem{Adams}
{\sc R.~A. Adams}, {\em Sobolev Spaces}, Academic Press, 1973.

\bibitem{ABD98}
{\sc C.~Amrouche, C.~Bernardi, M.~Dauge, and V.~Girault}, {\em Vector
  potentials in three-dimensional non-smooth domains}, Math. Methods Appl.
  Sci., 21 (1998), pp.~823--864.

\bibitem{BaCa:01}
{\sc T.~Bagby and N.~Casta{\~n}eda}, {\em Sobolev spaces and approximation
  problems for differential operators}, in Approximation, Complex Analysis, and
  Potential Theory, Springer, 2001, pp.~73--106.

\bibitem{Brezis}
{\sc H.~Brezis}, {\em Functional Analysis, {S}obolev Spaces and Partial
  Differential Equations}, Springer, 2011.

\bibitem{BuCh:03}
{\sc A.~Buffa and S.~H. Christiansen}, {\em The electric field integral
  equation on {L}ipschitz screens: definitions and numerical approximation},
  Numer. Math., 94 (2003), pp.~229--267.

\bibitem{Ca:00}
{\sc A.~M. Caetano}, {\em Approximation by functions of compact support in
  {B}esov-{T}riebel-{L}izorkin spaces on irregular domains}, Studia Math., 142
  (2000), pp.~47--63.

\bibitem{Ch:13}
{\sc S.~N. Chandler-Wilde}, {\em Scattering by arbitrary planar screens}, in
  Computational Electromagnetism and Acoustics, Oberwolfach Report No. 03/2013,
  DOI: 10.4171/OWR/2013/03, 2013, pp.~154--157.

\bibitem{ChGrLaSp:11}
{\sc S.~N. Chandler-Wilde, I.~G. Graham, S.~Langdon, and E.~A. Spence}, {\em
  Numerical-asymptotic boundary integral methods in high-frequency acoustic
  scattering}, Acta Numer., 21 (2012), pp.~89--305.

\bibitem{CoercScreen}
{\sc S.~N. Chandler-Wilde and D.~P. Hewett}, {\em Acoustic scattering by
  fractal screens: mathematical formulations and wavenumber-explicit continuity
  and coercivity estimates}, technical report, University of Reading preprint
  MPS-2013-17,  (2013).

\bibitem{CoercScreen2}
\leavevmode\vrule height 2pt depth -1.6pt width 23pt, {\em Wavenumber-explicit
  continuity and coercivity estimates in acoustic scattering by planar
  screens}, Integr. Equat. Operat. Th., 82 (2015), pp.~423--449.

\bibitem{ScreenPaper}
\leavevmode\vrule height 2pt depth -1.6pt width 23pt, {\em Well-posed {PDE} and
  integral equation formulations for scattering by fractal screens}, submitted
  for publication, preprint at arXiv:1611.09539,  (2016).

\bibitem{InterpolationCWHM}
{\sc S.~N. Chandler-Wilde, D.~P. Hewett, and A.~Moiola}, {\em Interpolation of
  {H}ilbert and {S}obolev spaces: quantitative estimates and counterexamples},
  Mathematika, 61 (2015), pp.~414--443.

\bibitem{ChPi}
{\sc J.~Chazarain and A.~Piriou}, {\em Introduction to the Theory of Linear
  Partial Differential Equations}, North-Holland, 1982.

\bibitem{ClHi:13}
{\sc X.~Claeys and R.~Hiptmair}, {\em Integral equations on multi-screens},
  Integr. Equat. Oper. Th., 77 (2013), pp.~167--197.

\bibitem{Conway}
{\sc J.~B. Conway}, {\em A Course in Functional Analysis}, Springer, 2nd~ed.,
  1990.

\bibitem{Costabel88}
{\sc M.~Costabel}, {\em Boundary integral operators on {L}ipschitz domains:
  elementary results}, SIAM J. Math. Anal., 19 (1988), pp.~613--626.

\bibitem{Co:04}
\leavevmode\vrule height 2pt depth -1.6pt width 23pt, {\em Time-dependent
  problems with the boundary integral equation method}, Encyc. Comput. Mech.,
  1:25 (2004).

\bibitem{DiPaVa:12}
{\sc E.~Di~Nezza, G.~Palatucci, and E.~Valdinoci}, {\em Hitchhiker's guide to
  the fractional {S}obolev spaces}, Bulletin des Sciences Mathématiques, 136
  (2012), pp.~521 -- 573.

\bibitem{Fal}
{\sc K.~Falconer}, {\em Fractal Geometry: Mathematical Foundations and
  Applications}, Wiley, 3rd ed., 2014.

\bibitem{Fr:79}
{\sc L.~E. Fraenkel}, {\em On regularity of the boundary in the theory of
  {S}obolev spaces}, Proc. London Math. Soc., 3 (1979), pp.~385--427.

\bibitem{GiRS:02}
{\sc J.~P. Gianvittorio and Y.~Rahmat-Samii}, {\em Fractal antennas: a novel
  antenna miniaturization technique, and applications}, IEEE Antennas Propag.
  Mag., 44 (2002), pp.~20--36.

\bibitem{Gri}
{\sc P.~Grisvard}, {\em Elliptic Problems in Nonsmooth Domains}, SIAM Classics
  in Applied Mathematics, 2011.

\bibitem{grubb}
{\sc G.~Grubb}, {\em Distributions and Operators}, Springer, 2009.

\bibitem{Ha-Du:92}
{\sc T.~Ha-Duong}, {\em On the boundary integral equations for the crack
  opening displacement of flat cracks}, Integr. Equat. Operat. Th., 15 (1992),
  pp.~427--453.

\bibitem{Hs0paper}
{\sc D.~P. Hewett and A.~Moiola}, {\em A note on properties of the restriction
  operator on {S}obolev spaces}, submitted for publication, preprint at
  arXiv:1607.01741,  (2016).

\bibitem{HewMoi:15}
\leavevmode\vrule height 2pt depth -1.6pt width 23pt, {\em On the maximal
  {S}obolev regularity of distributions supported by subsets of {E}uclidean
  space}, Anal. Appl., published online, (2016).\\ DOI: 10.1142/S021953051650024X.

\bibitem{HoLi:56}
{\sc L.~H\"ormander and J.~L. Lions}, {\em Sur la compl\'etion par rapport \`a
  une int\'egrale de {D}irichlet}, Math. Scand., 4 (1956), pp.~259--270.

\bibitem{HsWe08}
{\sc G.~C. Hsiao and W.~L. Wendland}, {\em Boundary Integral Equations},
  Springer, 2008.

\bibitem{JoWa84}
{\sc A.~Jonsson and H.~Wallin}, {\em Function {S}paces on {S}ubsets of
  {${\mathbb R}^n$}}, Math. Rep., 2 (1984).

\bibitem{Ka:95}
{\sc T.~Kato}, {\em Perturbation Theory for Linear Operators}, Springer,
  corrected printing of 2nd~ed., 1995.

\bibitem{Kellogg}
{\sc O.~D. Kellogg}, {\em Foundations of Potential Theory}, Springer, 1929.

\bibitem{LiMaI}
{\sc J.-L. Lions and E.~Magenes}, {\em Non-Homogeneous Boundary Value Problems
  and Applications I}, Springer, 1972.

\bibitem{Li:67b}
{\sc W.~Littman}, {\em A connection between $\alpha$-capacity and $m-p$
  polarity}, Bull. Am. Math. Soc., 73 (1967), pp.~862--866.

\bibitem{Li:67a}
\leavevmode\vrule height 2pt depth -1.6pt width 23pt, {\em Polar sets and
  removable singularities of partial differential equations}, Ark. Mat., 7
  (1967), pp.~1--9 (1967).

\bibitem{Maz'ya}
{\sc V.~G. Maz'ya}, {\em Sobolev Spaces with Applications to Elliptic Partial
  Differential Equations}, Springer, 2nd~ed., 2011.

\bibitem{MaPo97}
{\sc V.~G. Maz'ya and S.~V. Poborchi}, {\em Differentiable Functions on Bad
  Domains}, World Scientific, 1997.

\bibitem{McLean}
{\sc W.~McLean}, {\em Strongly Elliptic Systems and Boundary Integral
  Equations}, CUP, 2000.

\bibitem{NEC67}
{\sc J.~Ne{\v{c}}as}, {\em Les M\'ethodes Directes en Th\'eorie des \'Equations
  Elliptiques}, Masson et Cie, \'Editeurs, Paris, 1967.

\bibitem{NOS13}
{\sc R.~H. Nochetto, E.~Ot{\'a}rola, and A.~J. Salgado}, {\em A {PDE} approach
  to fractional diffusion in general domains: a priori error analysis}, Found.
  Comput. Math., 15 (2015), pp.~733--791.

\bibitem{Po:72}
{\sc J.~C. Polking}, {\em Approximation in {$L^p$} by solutions of elliptic
  partial differential equations}, Am. J. Math., 94 (1972), pp.~1231--1244.

\bibitem{PBaRomPouCar:98}
{\sc C.~Puente-Baliarda, J.~Romeu, R.~Pous, and A.~Cardama}, {\em On the
  behavior of the {S}ierpinski multiband fractal antenna}, IEEE Trans. Antennas
  Propag., 46 (1998), pp.~517--524.

\bibitem{Rogers}
{\sc L.~G. Rogers}, {\em Degree-independent {S}obolev extension on locally
  uniform domains}, J. Funct. Anal., 235 (2006), pp.~619--665.

\bibitem{Ru91}
{\sc W.~Rudin}, {\em Functional Analysis}, McGraw-Hill, 2nd~ed., 1991.

\bibitem{RunstSickel}
{\sc T.~Runst and W.~Sickel}, {\em Sobolev spaces of fractional order,
  {N}emytskij operators, and nonlinear partial differential equations}, De
  Gruyter, 1996.

\bibitem{sauter-schwab11}
{\sc S.~A. Sauter and C.~Schwab}, {\em Boundary Element Methods}, Springer,
  2011.

\bibitem{Sickel99b}
{\sc W.~Sickel}, {\em On pointwise multipliers for {$F^s_{p,q}({\bf R}^n)$} in
  case {$\sigma_{p,q}<s<n/p$}}, Ann. Mat. Pura Appl. (4), 176 (1999),
  pp.~209--250.

\bibitem{Sickel99a}
\leavevmode\vrule height 2pt depth -1.6pt width 23pt, {\em Pointwise
  multipliers of {L}izorkin-{T}riebel spaces}, in The {Maz'ya} anniversary
  collection, {V}ol. 2 ({R}ostock, 1998), vol.~110 of Oper. Theory Adv. Appl.,
  Birkh\"auser, 1999, pp.~295--321.

\bibitem{Simmons}
{\sc G.~F. Simmons}, {\em Introduction to Topology and Modern Analysis}, Robert
  E. Krieger Publishing Co., 1983.

\bibitem{SriRanKri:11}
{\sc G.~Srivatsun, S.~S. Rani, and G.~S. Krishnan}, {\em A self-similar fractal
  {C}antor antenna for {MICS} band wireless applications}, Wireless Eng. and
  Tech., 2 (2011), pp.~107--111.

\bibitem{Stein}
{\sc E.~M. Stein}, {\em Singular Integrals and Differentiability Properties of
  Functions}, Princeton University Press, 1970.

\bibitem{Steinbach}
{\sc O.~Steinbach}, {\em Numerical Approximation Methods for Elliptic Boundary
  Value Problems}, Springer, 2008.

\bibitem{stephan87}
{\sc E.~P. Stephan}, {\em Boundary integral equations for screen problems in
  {$\mathbb{R}^3$}}, Integr. Equat. Operat. Th., 10 (1987), pp.~236--257.

\bibitem{St:03}
{\sc R.~S. Strichartz}, {\em Function spaces on fractals}, J. Funct. Anal., 198
  (2003), pp.~43--83.

\bibitem{Tartar}
{\sc L.~Tartar}, {\em An introduction to {S}obolev spaces and interpolation
  spaces}, Springer, 2007.

\bibitem{Triebel83ThFS}
{\sc H.~Triebel}, {\em Theory of Function Spaces}, Birkh{\"a}user, 1983.

\bibitem{Triebel97FracSpec}
\leavevmode\vrule height 2pt depth -1.6pt width 23pt, {\em Fractals and
  Spectra}, Birkh{\"a}user, 1997.

\bibitem{Tri:02}
\leavevmode\vrule height 2pt depth -1.6pt width 23pt, {\em Function spaces in
  {L}ipschitz domains and on {L}ipschitz manifolds. {C}haracteristic functions
  as pointwise multipliers}, Rev. Mat. Complut., 15 (2002), pp.~475--524.

\bibitem{Tri:08}
\leavevmode\vrule height 2pt depth -1.6pt width 23pt, {\em The dichotomy
  between traces on $d$-sets ${\Gamma}$ in $\mathbb{R}^n$ and the density of
  ${D}(\mathbb{R}^n\setminus{\Gamma}$) in function spaces}, Acta Math. Sin., 24
  (2008), pp.~539--554.

\bibitem{WeGa:03}
{\sc D.~H. Werner and S.~Ganguly}, {\em An overview of fractal antenna
  engineering research}, IEEE Antennas Propag. Mag., 45 (2003), pp.~38--57.

\end{thebibliography}
\end{document}